\theoremstyle{plain}
\newtheorem{lem}{Lemma}[section]
\newtheorem{cor}[lem]{Corollary}
\newtheorem{prop}[lem]{Proposition}
\newtheorem{thm}[lem]{Theorem}
\theoremstyle{definition}
\newtheorem{ex}[lem]{Example}
\newtheorem{rem}[lem]{Remark}
\newtheorem{dfn}[lem]{Definition}
\newcommand{\Z}{\mathbb{Z}}               
\newcommand{\Q}{\mathbb{Q}}              
\newcommand{\R}{\mathbb{R}}              
\newcommand{\LL}{\mathbb{L}}                
\newcommand{\DF}{\mathbb{D}_F}   
\newcommand{\DP}{\mathbb{D}_{F,P}}
\newcommand{\DQ}{\mathbb{D}_{F,Q}}
\newcommand{\DFd}{\DF^\star}          
\newcommand{\DPd}{\DP^\star}  
\newcommand{\DQd}{\DQ^\star}  
\newcommand{\Os}{\mathcal{O}}    
\newcommand{\OP}{\Os^P}         
\newcommand{\QOP}{{}^Q\OP}    
\newcommand{\Gg}{\mathcal{G}}    
\newcommand{\GP}{\Gg^P}      
\newcommand{\QGP}{{}^Q\GP}    
\newcommand{\Ff}{\mathcal{F}}   
\newcommand{\Ll}{\mathcal{L}}   
\DeclareMathOperator{\CH}{\mathrm{CH}}      
\DeclareMathOperator{\Hom}{\mathrm{Hom}}      
\DeclareMathOperator{\End}{\mathrm{End}}      
\DeclareMathOperator{\hh}{\mathtt{h}}             
\newcommand{\de}{\delta}                   
\newcommand{\al}{\alpha}
\newcommand{\be}{\beta}
\newcommand{\la}{\lambda}
\newcommand{\om}{\omega}
\begin{document}

\title[Oriented cohomology sheaves on double moment graphs]{Oriented cohomology sheaves on double moment graphs}

\author[R.~Devyatov]{Rostislav Devyatov}
\address[Rostislav Devyatov]{Department of Mathematical and Statistical Sciences, University of Alberta, 632 Central Academic Building, Edmonton, AB,  T6G 2G1, Canada}
\email{devyatov@ualberta.ca}
\urladdr{http://www.mccme.ru/~deviatov/}

\author[M.~Lanini]{Martina Lanini}
\address[Martina Lanini]{Dipartimento di Matematica, Universit\`a di Roma Tor Vergata, Via della Ricerca Scientifica, 00133 Rome, Italy}
\email{lanini@mat.uniroma2.it}
\urladdr{https://sites.google.com/site/martinalanini5/home}

\author[K.~Zainoulline]{Kirill Zainoulline}
\address[Kirill Zainoulline]{Department of Mathematics and Statistics, University of Ottawa, 150 Louis-Pasteur, Ottawa, ON, K1N 6N5, Canada}
\email{kirill@uottawa.ca}
\urladdr{http://mysite.science.uottawa.ca/kzaynull/}

\subjclass[2010]{14F05, 14F43, 14M15}
\keywords{moment graph, equivariant cohomology, motive, projective homogeneous space}

\begin{abstract} 
In the present paper we extend the theory of sheaves on moment graphs due to Braden-MacPherson and Fiebig to the context of an arbitrary oriented equivariant cohomology $\hh$ (e.g. to algebraic cobordism). We introduce and investigate structure $\hh$-sheaves on double moment graphs to describe equivariant oriented cohomology of products of flag varieties. We show that in the case of a total flag variety $X$ of Dynkin type $A$ the space of global sections of the double structure $\hh$-sheaf also describes the endomorphism ring of the equivariant $\hh$-motive of $X$.
\end{abstract}

\maketitle

\tableofcontents


\section{Introduction}

In \cite{GKM} Goresky, Kottwitz  and MacPherson showed that equivariant cohomology of varieties equipped with an action of a torus $T$ can be completely described by a certain graph. We refer to \cite{GKM} for the precise assumptions and examples. We only observe that for varieties with a finite number of isolated $T$-fixed points and 1-dimensional orbits (e.g. for flag varieties and Schubert varieties) this graph arises as a 1-skeleton of the $T$-action: vertices are the fixed points and edges are closures of 1-dimensional orbits. Since the torus acts on a 1-dimensional orbit via a character (which is uniquely defined up to a sign and which corresponds to a chosen orientation of the orbit closure), one can label edges of the graph by the corresponding characters. The resulting labelled graph is called the \emph{moment graph}.

In the subsequent works hypotheses on the $T$-action have been relaxed \cite{GHZ, GH, G} and the moment graph techniques have been applied to other cohomology theories \cite{B, R(Kn), VV, K, GR}. If the $T$-variety admits a stratification by $T$-invariant cells, then the inclusion relation of cells induces an orientation of the edges. In \cite{BMP} Braden and MacPherson showed that the information contained in this oriented moment graph is also sufficient to compute the (equivariant) intersection cohomology.  To do this they introduced the theory of \emph{sheaves on moment graphs}. This important concept motivated a series of papers by Fiebig, where he developed and axiomatized sheaves on moment graphs and exploited Braden-MacPherson's construction to attack various representation theoretical problems (see, for example,~\cite{F2}).

Let $G$ be a split semisimple linear algebraic group over a field of characteristic zero containing a maximal torus $T$. In the present paper, we propose a theory of moment graphs and sheaves on it adapted to any $G$-equivariant \emph{generalized cohomology theory} $\hh_G$ (e.g. equivariant Chow ring~\cite{To}, equivariant $K$-theory or equivariant algebraic cobordism~\cite{HML}). The sheaves we deal with, which we call \emph{oriented cohomology sheaves} or {\em $\hh$-sheaves}, are constituted by collections of modules and maps of modules over the equivariant coefficient ring $\hh_G(pt)$, satisfying certain conditions which are imposed by the graph data. The previously considered sheaves on moment graphs are hence obtained as the special case in which $G=T$ and $\hh$ is the Chow theory.

Let $P$ be a standard parabolic subgroup of $G$ containing $T$. We first look at the classical \emph{parabolic moment graph} $\GP$ of the projective homogeneous variety $G/P$ (see Example~\ref{ex:parabolic}) and associate to it the {\em structure ${\hh}$-sheaf} $\OP$ (see Example~\ref{ex:parsheaf}). The main result of \cite{CZZ2} says that the space of global sections $\Gamma(\OP)$ of the structure $\hh$-sheaf $\OP$ describes the $T$-equivariant oriented cohomology $\hh_T(G/P)$. We then generalize it as follows:

Given two standard parabolic subgroups $P$ and $Q$, we introduce a \emph{double moment graph} $\QGP$ (Definition~\ref{dfn:double}), and its associated structure $\hh$-sheaf $\QOP$ (Example~\ref{prop:motsheaf}). Suppose $G$ acts diagonally on the product $X=G/Q\times G/P$. Consider the induced action of the Weyl group $W$ on the $T$-equivariant cohomology $\hh_T(X)$ (see Section~\ref{sec:orcohprod}) and let $\hh_T(X)^W$ be the invariant subring. Our main result (Theorem~\ref{thm:main}) then says
\begin{itemize}
\item[(A)] $\hh_T(X)^W= \Gamma(\OP)^{W_Q}$, where $W_Q$ is the Weyl group of the Levi-part of $Q$;
\item[(B)] $\hh_T(X)^W \subseteq \Gamma(\QOP)$, 
where the equality holds if the double moment graph $\QGP$ is closed (see Definition~\ref{dfn:closure}).
\end{itemize}
The invariant subring $\hh_T(X)^W$ is related to the $G$-equivariant cohomology via the forgetful map $\hh_G(X) \to \hh_T(X)^W$ which has been recently studied in the context of \emph{motivic decompositions} of (generic) flag varieties \cite{CM}, \cite{PS}, \cite{NPSZ}, \cite{CNZ}. Observe that in all these studies the cohomology rings were described using the \emph{Schubert basis} (classes of orbit closures). In the present paper we describe it for the first time using the basis of the space of global sections (localization basis). We believe that such a new description leads to better understanding of cohomology and equivariant motives of flag varieties (see e.g. the subsequent paper \cite{LZ} where this basis is used to study cohomology of folded root systems).

As for the latter, consider the category of $G$-equivariant $\hh$-motives (see \cite[\S2]{GV} and \cite{PS} for definitions and basic properties). Let  $\mathfrak{M}_G$ be its full additive subcategory generated by motives of $G/P$ for all standard parabolic subgroups $P$ of $G$. The morphisms in $\mathfrak{M}_G$ are given by classes in $\hh_G(X)$ and their composition is given by the so called correspondence  product. The forgetful map induces a functor from $\mathfrak{M}_G$ to the category of $W$-invariant equivariant motives $\mathfrak{M}_T^W$. The categories $\mathfrak{M}_G$ and $\mathfrak{M}_T^W$ are closely related to each other  (see \cite{CNZ}). Our main result then implies that the global sections $\Gamma(\OP)^{W_Q}$ and $\Gamma(\QOP)$ of the respective structure sheaves describe the  $\Hom$-spaces between $W$-invariant equivariant motives $[G/Q]$ and $[G/P]$ (see Section~\ref{sec:motives}). In particular, it allows us (i)~to interpret the correspondence product in $\mathfrak{M}_T^W$ in moment graphs terms (Proposition~\ref{prop:corr}); and (ii)~to realize the endomorphism ring of the $W$-invariant equivariant motive of the total flag variety of type $A$ as the space of global sections of the structure {\tt h}-sheaf on the total double moment graph (Corollary~\ref{cor:typeA}).

The paper is organized as follows. In section~\ref{sec:prel} we recall basic facts concerning equivariant oriented cohomology and its coefficient rings; we sketch the algebraic construction of the equivariant cohomology of a flag variety. In the next section~\ref{sec:orcohprod} we study cohomology of the products of flag varieties; using the K\"unneth isomorphism we describe its $W$-invariants in terms of duals of the formal affine Demazure algebras.  Section~\ref{sec:doublemom} is dedicated to (parabolic, double) moment graphs and its combinatorics; we discuss $W$-action on moment graphs and its closure. In section~\ref{sec:closedgr} we describe closed double moment graphs for all classical Dynkin types and the type $G_2$ (we put all the technical details and proofs for type $B$ in the appendix). In section~\ref{sec:globsec} we introduce structure sheaves and its global sections; we state and prove our main result (Theorem~\ref{thm:main}). In the last section~\ref{sec:motives} we discuss applications to equivariant motives.

\medskip

\paragraph{\it Acknowledgements} This project started during the workshop Beyond Toric Geometry that was held in May 2017 in Oaxaca, Mexico. M.L. and K.Z. are hence very grateful to the organizers of this workshop and to BIRS Oaxaca for hospitality. R.D. was partially supported by the Fields Institute for Research in Mathematical Sciences, Toronto, Canada. R.D. and K.Z. were partially supported by the NSERC Discovery grant RGPIN-2015-04469, Canada. M.L. acknowledges the MIUR Excellence Department Project awarded to the Department of Mathematics, University of Rome Tor Vergata, CUP E83C18000100006.


\section{Preliminaries}\label{sec:prel}

\paragraph{\it Formal group laws.}
Let $R$ be a commutative ring. Following \cite[p.4]{LM} consider a one-dimensional commutative formal group law $F$ over $R$ that is a power series in two variables $x+_F y=F(x,y)\in R[[x,y]]$ which satisfies $x+_F y=y+_F x$, $x+_F 0=x$ and $(x+_F y)+_F z=x+_F(y+_F z)$. It can be written as
\[
x+_F y=x+y+\sum_{i,j>0} a_{ij}x^i y^j,\;\text{ for some coefficients }a_{ij}\in R.
\]
There is the formal inverse $-_Fx$ that is the power series in one variable satisfying $x+_F(-_Fx)=0$. The polynomial ring in variables $a_{ij}$'s modulo relations imposed by commutativity and associativity gives the so called Lazard ring $\LL$. There is the universal formal group law $F_U$ over $\LL$ together with the evaluation map $F_U\to F$ (given by evaluating the coefficients).

\begin{ex}
Basic examples of formal group laws (f.g.l.) are  
\begin{itemize}
\item The additive f.g.l. $x+_F y=x+y$ over $R=\Z$ with $F_U\to F$ given by $a_{ij}\mapsto 0$ and $-_F x=-x$ \cite[1.1.4]{LM}.
\item The multiplicative f.g.l. $x+_F y=x+y-\be xy$ over $R=\Z[\be]$ with $F_U\to F$ given by $a_{11}\mapsto -\be$, $a_{ij}\mapsto 0$ for all $i,j>1$ and $-_F x=\tfrac{x}{\be x-1}$. If $\be$ is invertible and $R=\Z[\be^{\pm 1}]$, then the respective $F$ is called multiplicative periodic \cite[1.1.5]{LM}.
\end{itemize}
\end{ex}

\paragraph{\it Equivariant cohomology theories.}
We fix a base field $k$ of characteristic $0$. Let $G$ be a split semisimple (not necessarily simply-connected) linear algebraic group over $k$. Given an algebraic oriented cohomology theory $\hh$ in the sense of \cite{LM} one constructs the associated $G$-equivariant oriented cohomology theory as follows \cite[\S5]{HML}:

Consider a system of $G$-representations $V_i$ and its open subsets $U_i\subseteq V_i$ such that
\begin{itemize}
\item $G$ acts freely on $U_i$ and the quotient $U_i/G$ exists as a scheme over $k$,
\item $V_{i+1}=V_i\oplus W_i$ for some representation $W_i$,
\item $U_i\subseteq U_i\oplus W_i\subseteq U_{i+1}$, and $U_i\oplus W_i\to U_{i+1}$ is an open inclusion, and
\item  $\mathrm{codim}\,(V_i\setminus U_i)$ strictly increases.
\end{itemize}
Such a system is called a good system of representations of $G$. 

Let $X$ be a amooth $G$-variety. Following~\cite[\S3 and \S5]{HML} the inverse limit induced by pull-backs 
\[
\varprojlim_i \hh(X\times^G U_i),\quad X\times^G U_i=(X\times_k U_i)/G,
\] 
does not depend on the choice of the system $(V_i,U_i)$ and, hence, defines the $G$-equivariant oriented cohomology $\hh_G(X)$.

The key property of $\hh$ and, hence, of $\hh_G$ is that it has characteristic classes which satisfy the Quillen formula for the tensor product of line bundles \cite[Lemma~1.1.3]{LM}. Namely,
\[
c_1^{\hh}(\Ll_1\otimes \Ll_2)=F(c_1^{\hh}(\Ll_1),c_1^{\hh}(\Ll_2)),
\]
where $c_1$ is the first characteristic class in the theory $\hh$, $\Ll_i$ is a line bundle over some variety $X$ and $F$ is a f.g.l. Hence, given $\hh$ one associates the f.g.l. $F$  over the coefficient ring $R=\hh(pt)$ (here $pt=\mathrm{Spec}\, k$). Conversely, given $F$ over $R$ one defines
\[
\hh(-):=\Omega(-)\otimes_{\LL}R,
\] 
where $\Omega$ is the universal oriented theory (algebraic cobordism of Levine-Morel) over the $\Omega(pt)=\LL$. Observe that all such theories satisfy the localization sequence
\[
\hh(Z) \to \hh(X) \to \hh(U) \to 0,
\]
where $U$ is open in $X$ and $Z=X\setminus U$ has the reduced subscheme structure. 

\begin{ex}
The Chow theory $\hh=\CH$ corresponds to the additive f.g.l. over $R=\Z$ \cite[Ex.1.1.4]{LM} and $\CH_G$ is the $G$-equivariant Chow theory in the sense of Totaro \cite{To,EG}.

Following \cite[Ex.1.1.5]{LM} consider the graded Grothendieck $K^0$ 
\[
K^0(X)[\be^{\pm 1}]=K^0(X)\otimes_\Z \Z[\be^{\pm1}],
\] 
where $\be$ is a formal variable (here we assign $\deg \be=-1$ and $\deg x=0$ for all $x\in K^0(X)$). Then $K^0(X)[\be^{\pm 1}]$ is an oriented theory in the sense of \cite{LM}  which corresponds to the multiplicative periodic f.g.l. over $R=\Z[\be^{\pm 1}]$. If we set $\be=1$, then the $\Z$-graded version of $K^0$ is the direct sum of copies of the usual $K^0(X)$ with each copy assigned the respective degree. Then $K_G^0(X)$ is the $\Z$-graded version of the equivariant $K$-theory obtained using the Borel construction.
\end{ex}

\begin{ex}
If $F$ is the f.g.l. of an elliptic curve, then it was shown in \cite[Rem.4.3]{ZZ} that the associated oriented cohomology theory $\hh_G$  is a stalk at the origin of the equivariant elliptic cohomology constructed by Ginzburg-Kapranov-Vasserot \cite{GKV}.
\end{ex}

\begin{ex}
The $G$-equivariant algebraic cobordism $\Omega_G$ constructed in \cite{HML} corresponds to a universal f.g.l. $F_U$ over the Lazard ring $\LL$.
\end{ex}

\paragraph{\it Equivariant coefficient rings.}
Let $T$ be a split maximal torus of $G$ and let $T^*$ be its character group. 

Suppose $F$ is not a polynomial. Following \cite[\S2]{CPZ} consider a completion $R[[x_\la]]_{\la\in T^*}$ of the polynomial ring $R[x_\la]_{\la\in T^*}$ in variables $x_\la$ with respect to the kernel of the augmentation map $x_\la\mapsto 0$. Set
\[
S=R[[x_\la]]_{\la\in T^*}/J
\] 
where $J$ is the closure of the ideal of relations $x_0=0$ and $x_{\la+\mu}=x_\la+_F x_\mu$ for all $\la,\mu\in T^*$. The ring $S$ is called the formal group algebra associated to $G$ and $F$. According to \cite[\S3]{CZZ2}, the $R$-algebra $S$ is the completion of the $T$-equivariant coefficient ring $\hh_T(pt)$ along the kernel of the forgetful map $\hh_T(pt)\to R$, i.e.
\[
S \simeq \hh_T(pt)^\wedge.
\]
Under this identification $x_\la$ is the first characteristic class $c_1^{\hh}(\Ll_\la)$ of the associated $T$-equivariant line bundle $\Ll_\la$ over $pt$. For simplicity, we will always deal with the so called Chern-complete theories (see \cite[Definition~2.2]{CZZ2} and \cite[Remark~2.3]{CZZ2}), hence, assuming that $\hh_T(pt)^\wedge=\hh_T(pt)$.

Suppose $F$ is a polynomial, which is exactly the case for the additive and multiplicative periodic f.g.l. Then we set $S=R[x_\la]_{\la\in T^*}/J$. It is well-known that $S\simeq \hh_T(pt)$.

\begin{ex}\label{ex:chr} 
For the additive f.g.l., i.e., for the Chow theory, if $G=PGL_2$, then $S=\Z[\al]$ where $\al$ is a simple root. If $G=SL_2$, then $S=\Z[\om]$ where $\om$ is a fundamental weight ($\al=2\om$).
\end{ex}

\begin{ex}\label{ex:kth}
For the multiplicative periodic f.g.l. (see e.g. \cite[2.20]{CPZ}), i.e. for $K^0(-)[\be^{\pm 1}]$, the ring $S$ is the group ring $R[T^*]$, $R=\Z[\be^{\pm 1}]$ (here we identify $x_\la$ with $\be^{-1}(1-e^{-\la})$). For instance, if $G=SL_2$, then $S=R[t^{\pm 1}]$, where $t=e^{\om}$.
\end{ex}

We fix a Borel subgroup $B$ of $G$ containing $T$ and, hence, the root system $\Sigma$ of $G$, the decomposition $\Sigma=\Sigma^+\amalg \Sigma^-$ into positive and negative roots and the Weyl group $W$. We recall computations (which is essentially an extension of the arguments by Kostant-Kumar \cite{KK86, KK90}) of the equivariant oriented cohomology ring $\hh_T(G/B)$ of the complete flag variety. Our main source is \cite{CZZ2}. All the details and proofs can be either found in \cite{CZZ2} or in the related works \cite{HMSZ} and \cite{CZZ}.

For all these computations to work we need to assume that $S$ is $\Sigma$-regular of \cite[Definition~4.4]{CZZ}. In particular, this holds if $2$ is a non-zero divisor in $R$ or if the root datum of $G$ does not contain an irreducible component of type ${C}$.

\paragraph{\it The complete flag variety case.} 
The Weyl group $W$ acts on the group of characters $T^*$ and, hence, on the equivariant coefficient ring $S=\hh_T(pt)$. Following \cite[\S5]{CZZ2} we define the twisted group algebra $S_W$ as the free left $S$-module 
\[
S_W=S\otimes_R R[W].
\] 
So each element of $S_W$ can be written as a $S$-linear combination $\sum_{w\in W} q_w\de_w$, where $\{\de_w\}_{w\in W}$ is the standard basis of the group ring, $q_w\in S$ are coefficients and the multiplication on $S_W$ satisfies the twisted commuting relation
\[
w(q)\de_w =\de_w q,\quad  q\in S.
\]

Consider the complete flag variety $G/B$. The $S$-linear dual $S_W^\star$ can be identified with the $T$-equivariant oriented cohomology of the $T$-fixed point set of $G/B$ \cite[\S6]{CZZ2}, i.e., 
\[
S_W^\star=\Hom_S (S_W,S)= \Hom(W,S) \simeq \hh_T((G/B)^T)= \oplus_{w\in W} \hh_T(pt_w).
\]
Observe that $S_W^\star$ is a commutative ring where the product is given by the usual pointwise multiplication of functions $W\to S$.

Let $Q=S[\tfrac{1}{x_\al},\al\in \Sigma]$ denote the localization of $S$ at all $x_\al$'s (i.e. at all characteristic classes $c_1^{\hh}(\Ll_\al)$) and let $Q_W=Q \otimes_R R[W]$ be the respective twisted group algebra. Following \cite[\S6]{HMSZ} we define the formal affine Demazure algebra $\DF$ to be the subalgebra of $Q_W$ generated by elements of $S\subset Q_W$ and by the so-called push-pull (or Demazure) elements 
\[
Y_i=\tfrac{1}{x_{-\al_i}}+\tfrac{1}{x_{\al_i}}\de_{s_i},\text{ for all simple roots }\al_i.
\]
The constant part of $\DF$, i.e. the subalgebra generated only by $Y_i$'s over $R$, will be denoted by $D_F$.

\begin{ex} (cf. \cite[Prop.7.1]{HMSZ}) 
For the Chow theory, $D_F$ (resp. $\DF$) is the nil-Coxeter (resp. affine nil-Hecke) algebra. For the Grothendieck $K^0$, $D_F$ (resp. $\DF$) is the (affine) $0$-Hecke algebra.
\end{ex}

According to \cite[Thm.8.2]{CZZ2} there is a ring isomorphism $\hh_T(G/B)\simeq \DFd$ between the $T$-equivariant oriented cohomology and the $S$-dual of $\DF$. The natural inclusion $\eta\colon S_W \to \DF$ induces the inclusion $\eta^*\colon \DFd \to S_W^\star$ which is the restriction to the $T$-fixed point locus $\hh_T(G/B) \hookrightarrow \hh_T((G/B)^T)$.

\paragraph{\it The Borel description.}
The algebra $\DF$ acts on its dual $\DFd$ by means of the Hecke action `$\bullet$', defined by ($Q_W$ acts on its dual $Q_W^*=\Hom_{Q}(Q_W,Q)$)
\[
q\de_w \bullet pf_v = p vw^{-1}(q)f_{vw^{-1}},\quad p,q\in Q,
\]
where $f_v$ is the standard dual basis (i.e., dual to $\de_w$) for the $Q$-module $Q_W^*$. Under this action the elements $Y_i$ of $\DF$ correspond to the classical push-pull operators 
\[
Y_i\bullet - \colon \hh_T(G/B)\to \hh_T(G/P_i) \to \hh_T(G/B),
\] 
where $P_i$ is the minimal parabolic subgroup corresponding to the simple root $\al_i$.

The equivariant characteristic map \cite[Def.~6.8]{CZZ1} $c\colon Q \to Q^*_W$ defined by $q\mapsto q\bullet \mathbf{1}$ restricts to $c\colon S \to \DFd=\hh_T(G/B)$. By definition, $c$ is a $W$-equivariant $R$-algebra homomorphism given by $c(q) = \sum_{w\in W}w(q)f_w$. Consider the map 
\[
\rho\colon S\otimes_{S^{W}} S \to \hh_T(G/B), \quad (s,s')\mapsto s\cdot c(s').
\]
Then by \cite[Thm.10.2]{CZZ2} the map $\rho$ is an isomorphism (called the Borel isomorphism) if and only if the usual characteristic map $c\colon S \to \hh(G/B)$ is surjective.

\begin{rem}
Observe that the Borel isomorphism holds if the ring of coefficients $R$ contains $\Q$ or if $\hh=K_0$ and $G$ is simply-connected \cite{St}. For Chow groups with $\Z$-coefficients it fails in general (e.g. for any simple group of type $B_n$, $n\ge 2$).
\end{rem}

\paragraph{\it The parabolic case.}
Let $P$ be a parabolic subgroup of $G$ and let $G/P$ be the respective projective homogeneous variety. Let $W_P$ denote the Weyl group of the Levi-part of $P$. The above constructions can be extended to describe the $T$-equivariant oriented cohomology $\hh_T(G/P)$ as follows (see \cite[pp.126-128]{CZZ2}).

We first define $\DP$ to be the image of $\DF$ under the composite $\DF\to Q_W \to Q_{W/W_P}$, where $Q_{W/W_P}$ is a free $Q$-module spanned by  elements $\de_{\bar w}$ corresponding to cosets $\bar w\in W/W_P$. Observe that $\DP$ is not a ring but only a free left $S$-module. We set $\DPd=\Hom_S(\DP,S)$. The product on $Q_W^*$ descends to the product on $\DPd$  (see \cite[\S11]{CZZ2}) and turns it into an algebra. The canonical map $\DPd \to \DFd$ induced by the projection $Q_{W} \to Q_{W/W_P}$ is injective, moreover, its image can be identified with the $W_P$-invariant subring via the Hecke action `$\bullet$'.

Finally, there are ring isomorphisms (see \cite[Thm.~8.11 and Thm.~9.1]{CZZ2})
\[
\hh_T(G/P)\simeq (\DFd)^{W_P}\simeq \DPd.
\]
The restriction to the $T$-fixed point locus $\hh_T(G/P) \to \hh_T((G/P)^T)$ is then induced by $S_{W/W_P} \to \DP$. The Borel map $\rho$ restricts to $W_P$-invariants as well and turns into \cite[Lemma~11.1]{CZZ2}
\[
\rho_P\colon S\otimes_{S^{W}} S^{W_P} \to \hh_T(G/P).
\]
Therefore, if $R$ contains $\Q$ we can identify $\hh_T(G/P)$ with $S\otimes_{S^{W}} S^{W_P}$.


\section{Equivariant oriented cohomology of the products}\label{sec:orcohprod}

We now fix two parabolic subgroups $P$ and  $Q$  of $G$ and  denote by $W_P$ (resp. $W_Q$) the Weyl groups of the Levi part of $P$ (resp. $Q$). 

Our first goal is to describe the cohomology of the product $\hh_G(G/Q\times G/P)$ as a direct sum spanned by classes (of desingularizations) of orbits parametrized by double cosets $W_Q\backslash W/W_P$ (see Lemma~\ref{lem:celldec}). Note that for Chow theory (even for Chow motives) this was done in \cite[Thm.~16]{CM}; for an arbitrary $\hh_G$ and $P=Q=B$ this is \cite[Example~3.6]{NPSZ}. We also show that this direct sum decomposition is compatible with the forgetful map (Corollary~\ref{cor:forget}) and with the geometric action of $W$ (Corollary~\ref{cor:Wact}). Finally, using the K\"unneth isomorphism of \cite{NPSZ} and the algebraic $W$-action of \cite{LZZ} we describe the $W$-invariants of the cohomology of the product as $W_Q$-invariants of the cohomology of $G/P$ (Proposition~\ref{prop:winvis}).

\paragraph{\it Orbit stratification.} 
We follow \cite[\S2-5]{CM}. Consider diagonal action of $G$ on the product $X=G/Q\times G/P$. The $G$-orbits are parametrized by double cosets $W_{Q}\backslash W/W_{P}$: the orbit $\Os_u$ corresponding to a minimal double coset representative $u$ is the orbit of the pair $(Q,uPu^{-1})$. The closure $\overline{\Os}_w$ contains $\Os_v$ for another minimal double coset representative $v$ if and only if $v\le w$ in the Bruhat order on $W$. For each orbit $\Os_u$ there is a $G$-equivariant affine fibration of rank $l(u)$ (see~\cite[Prop.11]{CM})
\[
\phi_u\colon \Os_u \simeq G/(Q \cap uPu^{-1}) \to G/P_u,
\]
where $P_u:=R_{unip}(Q)\cdot (Q\cap uPu^{-1})$ is a standard parabolic subgroup. So $X$ has the $G$-equivariant stratification of  \cite[Prop.13]{CM} by closures of the orbits $\overline{\Os}_u$ with the base consisting of flag varieties $\{G/P_u\}_{u\in {}^QW^P}$, where ${}^QW^P$ denotes the set of minimal double coset representatives.

\paragraph{\it Relatively cellular filtrations}
Given $u\in {}^Q W^P$ let $u=s_{i_1}s_{i_2}\ldots s_{i_l}$ be its reduced expression (written as a product of simple reflections) and let $P_i$ denote the minimal parabolic subgroup corresponding to $s_i$. Then the image of the projection on the first and the last factors (c.f. \cite[(8)]{NPSZ})
\[
\widehat{\Os}_u=G/B\times_{G/P_{i_1}} G/B \times_{G/P_{i_2}} \times \ldots \times G/B \to G/B\times G/B  \to G/Q \times G/P
\]
coincides with the closure $\overline{\Os}_u$. Therefore it gives a $G$-equivariant resolution of singularities of $\overline{\Os}_u$. Combining it with the orbit stratification we obtain that $X$ satisfies the following slight modification of \cite[Definition~3.2]{NPSZ} (cf. \cite[\S66]{EKM}):
\begin{itemize}
\item[(1)] there is a filtration by proper closed $G$-subvarieties $X_u=\overline{\Os}_u$, $u\in {}^Q W^P$ (inclusions correspond to the Bruhat order `$<$')
\item[(2)] each $\Os_u=X_u \setminus \cup_{v<u} X_v$ is a $G$-equivariant affine fibration over $Y_u=G/P_u$ of rank $l(u)$, and
\item[(3)] each $X_u$ admits a $G$-equivariant resolution of singularities $\widehat X_u =\widehat{\Os}_u$; we set $g_u\colon \widehat X_u \to X_u \hookrightarrow X$ and hence $[\widehat X_u]:=g_{u*}(1_{\widehat X_u})\in \hh_G(X)$ (the image of the fundamental class of $\hat X_u$ under the induced push-forward map).
\end{itemize}
Similar to \cite[\S66]{EKM} we call such $X$ a relatively $G$-equivariant cellular space over the base $\{Y_u\}$.

\begin{rem}
A difference from \cite[3.2]{NPSZ} is that in (2) instead of a fixed base $S$ we are allowed to have several base varieties $Y_u$. Observe also that for $\hh=\CH$ conditions (1) and (2) are precisely those of \cite[Prop.13]{CM}.
\end{rem}

\paragraph{\it Cohomology decomposition.}
We follow the arguments of  \cite[\S66]{EKM} for the variety $X=G/Q\times G/P$, $X_u=\overline{\Os}_u$, $U_u=\Os_u$, $Y_u=G/P_u$, and $Y_e=\Os_e$ (the minimal closed orbit). Let $\Gamma_u \subset U_u \times Y_u$ be the graph of the fibration $\phi_u$, let $\overline{\Gamma}_u$ denote its closure in $X_u \times Y_u$. Consider the  composite
\[
a_u\colon \CH(Y_u) \stackrel{p_Y^*}\to \CH(X_u \times Y_u) \stackrel{-\cap \overline{\Gamma}_u}\longrightarrow \CH(X_u\times Y_u) \stackrel{p_{X*}}\to \CH(X_u) \to \CH(X),
\]
where the maps $p_Y$ and $p_X$ are projections, the second map is the multiplication by the class of $\overline{\Gamma}_u$ and the last map is the push-forward induced by the closed embedding $X_u\hookrightarrow X$. By definition $a_u(1_{Y_u})=[X_u]$. Then \cite[Thm.~66.2]{EKM} says that the sum $\oplus_u a_u$ gives the direct sum decomposition
\[
\bigoplus_{u\in {}^Q W^P} \CH(Y_u)[X_u] \stackrel{\simeq}\to \CH(X).
\]
(Note also that this decomposition is exactly the one of \cite[Thm.16]{CM} obtained by taking realizations of the respective Chow motives.)

To extend it to an arbitrary $\hh$ we replace $X_u$ and $\overline{\Gamma}_u$ by the respective desingularizations $\widehat{X}_u$ and $\widehat{\Gamma}_u$ outside of $U_u$ (cf. the proof of \cite[Thm.~4.4]{NZ}) so that there is a commutative diagram
\[
\xymatrix{
\hh(\widehat{X}_u \times Y_u) \ar[r]^{-\cap \widehat{\Gamma}_u}  &     \hh(\widehat{X}_u\times Y_u) \ar[d] \\
\hh(Y_u)\ar[r]^{\phi_u^*} \ar[u]^{p_{Y}^*} & \hh(U_u)
}
\]
Consider the respective composite
\[
\hat a_u\colon \hh(Y_u) \stackrel{p_Y^*}\to \hh(\widehat{X}_u \times Y_u) \stackrel{-\cap \widehat{\Gamma}_u}\longrightarrow \hh(\widehat{X}_u\times Y_u) \stackrel{p_{X*}}\to \hh(X).
\]
We have $\hat{a}_u(1_{Y_u})=[\widehat{X}_u]$ and by the same arguments as in  \cite[Example~2.3]{NPSZ} we conclude that $\oplus_u \hat a_u$ is an isomorphism, hence, it gives a direct sum decomposition of $\hh(pt)$-modules
\[
\bigoplus_{u\in  {}^Q W^P} \hh(Y_u)[\widehat{X}_u] \stackrel{\simeq}\to \hh(X).
\]

Since all the maps and desingularizations are $G$-equivariant, repeating the arguments of \cite[Example~3.6]{NPSZ} we obtain the $G$-equivariant version of this decomposition:

\begin{lem}\label{lem:celldec} There is a direct sum decomposition of $\hh_G(pt)$-modules
\[
\bigoplus_{u\in  {}^Q W^P} \hh_G(G/P_u)[\widehat{\Os}_u] \stackrel{\simeq}\to \hh_G(G/Q\times G/P).
\]
\end{lem}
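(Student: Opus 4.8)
The plan is to bootstrap the $G$-equivariant decomposition from the non-equivariant one established just above, exactly as in \cite[Example~3.6]{NPSZ}. Recall that for a good system of representations $(V_i,U_i)$ of $G$ and any smooth $G$-variety $Z$, one has $\hh_G(Z)=\varprojlim_i \hh(Z\times^G U_i)$. First I would observe that each of the varieties and morphisms appearing in the construction of $\hat a_u$ is $G$-equivariant: the resolutions $\widehat{\Os}_u\to \overline{\Os}_u$, the affine fibrations $\phi_u\colon \Os_u\to G/P_u$, the graphs $\widehat{\Gamma}_u$, and the projections $p_X$, $p_Y$. Hence, after applying the Borel construction $(-)\times^G U_i$, all of them descend to the corresponding (smooth, quasi-projective) varieties over the approximation space $U_i/G$, and the relatively cellular filtration $(1)$--$(3)$ of $X$ induces a relatively cellular filtration of $X\times^G U_i$ over the bases $Y_u\times^G U_i=(G/P_u)\times^G U_i$.

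The key step is then to apply the non-equivariant decomposition (the displayed isomorphism $\bigoplus_u \hat a_u$ over $\hh(pt)$, equivalently \cite[Example~2.3]{NPSZ} / \cite[Thm.~66.2]{EKM}) to the variety $X\times^G U_i$ with this induced filtration. This gives, for each $i$, a direct sum decomposition
\[
\bigoplus_{u\in {}^QW^P} \hh\bigl((G/P_u)\times^G U_i\bigr)[\widehat{\Os}_u\times^G U_i] \;\xrightarrow{\ \simeq\ }\; \hh\bigl((G/Q\times G/P)\times^G U_i\bigr),
\]
and one checks that these isomorphisms are compatible with the pull-back maps in the inverse system indexed by $i$ (here one uses that the classes $[\widehat{\Os}_u\times^G U_i]$ are the restrictions of the corresponding classes for $i+1$, which follows from compatibility of push-forwards along the open inclusions $U_i\oplus W_i\hookrightarrow U_{i+1}$, as in the proof that $\hh_G$ is well defined). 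Passing to the inverse limit over $i$ and using that $\hh_G(G/P_u)=\varprojlim_i \hh((G/P_u)\times^G U_i)$ and $\hh_G(X)=\varprojlim_i \hh(X\times^G U_i)$ — together with the fact that a finite direct sum commutes with inverse limits — yields the claimed decomposition
\[
\bigoplus_{u\in {}^QW^P} \hh_G(G/P_u)[\widehat{\Os}_u] \;\xrightarrow{\ \simeq\ }\; \hh_G(G/Q\times G/P).
\]

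The main obstacle I expect is not the formal limit argument but making precise that the hypotheses of the non-equivariant cellular decomposition theorem genuinely hold for $X\times^G U_i$ uniformly in $i$: one must know that $\Os_u\times^G U_i$ is still an affine fibration over $(G/P_u)\times^G U_i$ (clear, since fiberwise nothing changes) and that the resolutions remain resolutions after the Borel twist (clear by base change along the smooth, in fact geometrically nice, map to $U_i/G$), and that the self-intersection/graph classes $\widehat{\Gamma}_u$ behave well under this base change. These are exactly the points handled in \cite[Example~3.6]{NPSZ} for the case $P=Q=B$; here I would simply remark that the argument is identical, the only change being that the single base $G/B$ is replaced by the family $\{G/P_u\}_{u\in{}^QW^P}$, which does not affect the limit argument since the index set ${}^QW^P$ is finite and fixed. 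I would therefore phrase the proof as: the displayed non-equivariant isomorphism $\bigoplus_u\hat a_u$ is compatible with the good system of representations in the sense of \cite[Example~3.6]{NPSZ}, and passing to the inverse limit gives the statement.
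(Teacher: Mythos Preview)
Your proposal is correct and matches the paper's approach exactly: the paper simply says that since all maps and desingularizations are $G$-equivariant, one repeats the arguments of \cite[Example~3.6]{NPSZ} to pass from the non-equivariant decomposition to the $G$-equivariant one. You have spelled out precisely what that reference entails (Borel approximations, the induced relatively cellular filtration on each $X\times^G U_i$, compatibility in the inverse system, and passage to the limit), so there is nothing to add.
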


In other words, any element $x\in \hh_G(G/Q\times G/P)$ can be written uniquely as 
\[
x=\sum_{u\in {}^Q W^P} x_u [\widehat{\Os}_u] ,\quad \text{ where }x_u\in \hh_G(G/P_u).
\]

\begin{ex}
If $P=B$ (resp. $Q=B$), then $P_u=B$ for all $u$ and the orbit stratification of $G/Q\times G/P$ coincides with the relative cellular stratification induced by projections $G/Q\times G/P$ onto $G/P$ (resp. $G/Q$). In other words, the base of this stratification is $Y_u=G/B$ and the respective cohomology $\hh_G(G/Q\times G/P)$ is a free $S$-module:
\[
\bigoplus_{u\in {}^QW^P} S [\widehat{\Os}_u]\stackrel{\simeq}\to \hh_G(G/Q \times G/P). 
\]
\end{ex}

\paragraph{\it Restricting to $T$-varieties} 
Given a $G$-variety $X$ let $\hh_{G|T}(X)$ denote the image of the forgetful map $\hh_G(X)\to \hh_T(X)$. Observe that $\hh_{G|T}(X)$ is a subring of the $T$-equivariant cohomology $\hh_T(X)$.

For $X=G/Q\times G/P$ the forgetful map $\hh_G(X)\to \hh_T(X)$ is induced by restricting the coefficients $\hh_G(G/P_u) \to \hh_T(G/P_u)$ in the above decomposition. Hence, we obtain:

\begin{cor}\label{cor:forget} 
There is a direct sum decomposition of $\hh_{G|T}(pt)$-modules
\[
 \bigoplus_{u\in  {}^Q W^P} \hh_{G|T}(G/P_u)[\widehat{\Os}_u] \stackrel{\simeq}\to \hh_{G|T}(G/Q\times G/P).
\]
\end{cor}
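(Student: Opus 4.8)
The plan is to deduce this from Lemma~\ref{lem:celldec} together with the functoriality of the forgetful map, so that essentially no new computation is needed. First I would note that the construction preceding Lemma~\ref{lem:celldec} is purely geometric: each map $\hat a_u$ is a composite of the pull-back $p_Y^*$, the cap product with the class of the desingularized graph $\widehat{\Gamma}_u$, the push-forward $p_{X*}$ and the push-forward $g_{u*}$ along the closed embedding $\widehat{X}_u\hookrightarrow X$; and $[\widehat{\Os}_u]=\hat a_u(1_{Y_u})$ is the push-forward of a fundamental class. Since $T\subset G$, every scheme, morphism and subvariety occurring there is also $T$-equivariant, so the same argument run $T$-equivariantly produces the companion decomposition
\[
\bigoplus_{u\in {}^Q W^P}\hh_T(G/P_u)[\widehat{\Os}_u]\stackrel{\simeq}\to \hh_T(G/Q\times G/P),
\]
indexed by the same set ${}^Q W^P$, with the same bases $G/P_u$ and the same classes $[\widehat{\Os}_u]$.

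Next I would use that the forgetful map $\mathrm{res}\colon \hh_G(-)\to\hh_T(-)$ is a morphism of equivariant oriented cohomology theories in the sense of \cite{HML}: it is natural with respect to pull-backs, commutes with push-forwards along projective morphisms, is multiplicative (hence compatible with cap products), and carries the $G$-equivariant class of a $G$-stable subvariety to its $T$-equivariant class. Applying this to the composite defining $\hat a_u$ gives $\mathrm{res}\circ\hat a_u^G=\hat a_u^T\circ\mathrm{res}_u$ for every $u$, where $\mathrm{res}_u\colon \hh_G(G/P_u)\to\hh_T(G/P_u)$ is the forgetful map of the base; in particular $\mathrm{res}([\widehat{\Os}_u])=[\widehat{\Os}_u]$. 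Assembling these identities over $u$ yields a commutative square whose horizontal arrows are the two decomposition isomorphisms and whose left vertical arrow is the block-diagonal map $\bigoplus_u\mathrm{res}_u$.

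It then remains to read off the image. By definition $\hh_{G|T}(G/Q\times G/P)=\mathrm{im}(\mathrm{res})$, and through the horizontal isomorphisms this is identified with the image of $\bigoplus_u\mathrm{res}_u$ inside $\bigoplus_u\hh_T(G/P_u)[\widehat{\Os}_u]=\hh_T(G/Q\times G/P)$. Being block-diagonal relative to a genuine direct sum, this map has image $\bigoplus_u\mathrm{im}(\mathrm{res}_u)[\widehat{\Os}_u]=\bigoplus_u\hh_{G|T}(G/P_u)[\widehat{\Os}_u]$, with the direct sum decomposition (of $\hh_{G|T}(pt)$-modules, all maps in sight being $\hh_G(pt)$-linear and compatible with $\mathrm{res}$) inherited from the ambient one. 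The one step that actually needs care --- and hence the main obstacle --- is the assertion that the construction of the decomposition goes through verbatim $T$-equivariantly and that $\mathrm{res}$ intertwines the $G$- and $T$-versions of $\hat a_u$, i.e.\ that the cap product with $[\widehat{\Gamma}_u]$ and the push-forwards $p_{X*}$, $g_{u*}$ genuinely commute with the forgetful map; this, however, is exactly the standard functoriality of equivariant oriented theories built via the Borel construction, and requires no new idea.
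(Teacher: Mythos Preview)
Your proposal is correct and follows essentially the same approach as the paper: the paper's own argument is the single sentence preceding the corollary, namely that the forgetful map $\hh_G(X)\to\hh_T(X)$ is induced by restricting coefficients $\hh_G(G/P_u)\to\hh_T(G/P_u)$ in the decomposition of Lemma~\ref{lem:celldec}, whence the image decomposes summand-wise. You have simply unpacked this sentence in detail, making explicit the commutative square and the block-diagonality of $\bigoplus_u\mathrm{res}_u$.
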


\begin{rem} 
Observe that the forgetful maps $\hh_G(G/P_u) \to \hh_T(G/P_u)$ and hence $\hh_G(X)\to \hh_T(X)$ are not necessarily injective (see the discussion in \cite{CNZ}).

However, if either $P=B$ or $Q=B$ then the forgetful map \[S=\hh_G(G/P_u) \to \hh_T(G/P_u)=\hh_T(G/B)\] is injective, hence, so is $\hh_G(X)\to \hh_T(X)$. So we obtain
\[
\hh_G(G/Q\times G/B)=\hh_{G|T}(G/Q\times G/B)\text{ and } \hh_G(G/B\times G/P)=\hh_{G|T}(G/B\times G/P).
\]

Finally, there are examples (see \cite{CNZ}) of $G$, $P\neq B$ and $Q\neq B$ such that maps $\hh_G(G/P_u) \to \hh_T(G/P_u)$ are injective for all $u\in\! {}^Q W^P$ and, therefore, we have  $\hh_G(G/Q\times G/P)=\hh_{G|T}(G/Q\times G/P)$.
\end{rem}

We now study the $W$-action on the cohomology of the product.

\paragraph{\it Geometric $W$-action.}
For an arbitrary (left) $G$-variety $X$ there is a natural (left) action of $W$ on $\hh_T(X)$. It can be realized by pull-backs induced by a right action of $W$ on each step of the Borel construction 
\[
V\times^T X=V\times X/(v,x)\sim (vt,t^{-1}x),\quad t\in T
\] 
given by 
\[
(v,x)T\cdot\sigma T=(v\sigma,\sigma^{-1}x)T, \quad \sigma \in N_G(T)
\] 
where $V$ is a contractible space taken to have a right $G$-action. 

By definition the forgetful map $\hh_G(X) \to \hh_T(X)$ factors through the invariants $\hh_T(X)^W$. Hence, $\hh_{G|T}(X)$ is a subring of $\hh_T(X)^W$.

Since the orbit stratification of $X=G/Q\times G/P$ is $W$-equivariant we obtain:

\begin{cor}\label{cor:Wact} 
There is a direct sum decomposition of $S^W\!$-modules
\[
\bigoplus_{u\in {}^Q W^P} \hh_{T}(G/P_u)^W[\widehat{\Os}_u]   \stackrel{\simeq}\to \hh_{T}(G/Q\times G/P)^W. 
\]
\end{cor}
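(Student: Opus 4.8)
The plan is to obtain Corollary~\ref{cor:Wact} by applying the functor of $W$-invariants to a $W$-equivariant enhancement of the decomposition of Lemma~\ref{lem:celldec}, now for $\hh_T$ in place of $\hh_G$. First I would observe that $X=G/Q\times G/P$ is a \emph{relatively $T$-equivariant cellular space} over the base $\{G/P_u\}_{u\in{}^QW^P}$ in exactly the same way it is a relatively $G$-equivariant one: each $X_u=\overline{\Os}_u$ is $T$-invariant because it is $G$-invariant, each $\Os_u=X_u\setminus\bigcup_{v<u}X_v$ is a $T$-equivariant affine fibration over $G/P_u$ of rank $l(u)$, and $g_u\colon\widehat{\Os}_u\to X_u\hookrightarrow X$ is a $T$-equivariant resolution of singularities. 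Running the argument of \cite[\S66]{EKM} together with \cite[Example~2.3]{NPSZ} verbatim with $T$ in place of $G$ then produces maps $\hat a_u^T\colon\hh_T(G/P_u)\to\hh_T(X)$ with $\hat a_u^T(1_{G/P_u})=[\widehat{\Os}_u]$ such that
\[
\bigoplus_{u\in{}^QW^P}\hh_T(G/P_u)[\widehat{\Os}_u]\ \xrightarrow{\ \sim\ }\ \hh_T(X),
\]
the $u$-th summand being the image of $\hat a_u^T$ inside $\hh_T(X)$.

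Next I would check that this isomorphism is equivariant for the geometric $W$-action described just above, where $W$ acts on $\hh_T(G/P_u)$ and on $\hh_T(X)$ because $G/P_u$ and $X$ are $G$-varieties. The key point is that the geometric $W$-action on the $T$-equivariant cohomology of $G$-varieties is natural with respect to pull-backs and push-forwards along $G$-equivariant morphisms, acts by ring automorphisms, and carries the class of a $G$-invariant closed subvariety to itself. All three facts are immediate from the construction of the action via the right $N_G(T)$-action on the Borel construction $V\times^T(-)$: a $G$-invariant closed subvariety $Z\subseteq X$ yields the $N_G(T)$-stable subvariety $V\times^T Z\subseteq V\times^T X$, and a $G$-equivariant morphism induces an $N_G(T)$-equivariant morphism of Borel constructions, to which one applies the functoriality of $\hh$. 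Since $\widehat{\Os}_u$, $G/P_u$ and $X$ are $G$-varieties, the fibration $\phi_u$, its graph closure $\widehat{\Gamma}_u\subset\widehat{\Os}_u\times G/P_u$, and the projections and resolution maps used to define $\hat a_u^T$ are all $G$-equivariant. Hence $\hat a_u^T$, being the composite of such pull-backs and push-forwards together with multiplication by the $W$-invariant class $[\widehat{\Gamma}_u]$ on the smooth variety $\widehat{\Os}_u\times G/P_u$, intertwines the two geometric $W$-actions; in particular each image $\mathrm{im}\,\hat a_u^T$ is $W$-stable and $[\widehat{\Os}_u]\in\hh_T(X)^W$.

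Finally, I would apply the left-exact functor $(-)^W$ to the $W$-equivariant isomorphism $\bigoplus_u\hat a_u^T$. Since the summands are individually $W$-stable, taking invariants commutes with the finite direct sum, and on the $u$-th summand it gives $\hat a_u^T\bigl(\hh_T(G/P_u)^W\bigr)=\hh_T(G/P_u)^W[\widehat{\Os}_u]$ because $\hat a_u^T$ is an injective $W$-equivariant map. As $\hat a_u^T$ is $\hh_T(pt)$-linear and $\hh_T(pt)^W=S^W$, the resulting isomorphism is one of $S^W$-modules, which is exactly the assertion. The main obstacle is the middle step: one must pin down precisely that the desingularization-and-correspondence maps $\hat a_u^T$ respect the geometric $W$-action, i.e. unwind the $N_G(T)$-construction enough to see that classes of $G$-invariant cycles are $W$-invariant and that $G$-equivariant pull-backs and push-forwards are $W$-equivariant; once this is in hand, the rest is formal bookkeeping with direct sums and invariants.
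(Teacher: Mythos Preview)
Your proposal is correct and follows essentially the same route as the paper: the paper's entire justification is the single sentence ``Since the orbit stratification of $X=G/Q\times G/P$ is $W$-equivariant we obtain'', and your argument simply unpacks what this means---namely, that the $T$-equivariant analogue of Lemma~\ref{lem:celldec} holds (because all data are $G$- and hence $T$-equivariant), that the maps $\hat a_u^T$ are built from $G$-equivariant morphisms and $G$-invariant classes and therefore intertwine the geometric $W$-action, and that taking $W$-invariants of a $W$-equivariant direct sum decomposition is formal. Your level of detail, particularly the verification via the $N_G(T)$-action on the Borel construction, is exactly the content the paper leaves implicit.
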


Observe that since $\hh_{G|T}(pt)$ is of finite index in $\hh_T(pt)^W$, $\hh_{G|T}(G/Q\times G/P)$ is a subgroup of finite index in $\hh_{T}(G/Q\times G/P)^W$.

\paragraph{\it Algebraic $W$-action.}
Following \cite[\S3]{LZZ} we introduce another $Q_W$ action on $Q_W^*$ by
\[
(p\de_w)\odot (qf_v)=pw(q)f_{wv}, ~p,q\in Q, w,v\in W,
\]
where $f_v$ is the standard basis (i.e., dual to $\de_w$) for $Q$-module $Q_W^*$. This action restricts to an action of $\DF$ on $\DFd$. Moreover, the respective action by $\de_w\in \DF$ gives the action by $w\in W$ on the cohomology $\DFd=\hh_T(G/B)$, i.e., the geometric $W$-action on $\hh_T(G/B)$ is given by the $\odot$-action.

\begin{rem} 
The $\odot$-action appeared in the context of Chow groups and $K$-theory in the works by Tymoczko, Brion, Peterson, Knutson and others.
\end{rem}

So we have two different actions on $\DFd$: the $\odot$-action (the left action) and the Hecke $\bullet$-action (the right action). There are two commutative diagrams \cite[Lemma~3.7]{LZZ}
\[
\xymatrix{S\otimes_{S^W} S \ar[r]^-{\rho}\ar[d]_{(z\cdot\_)\otimes 1} & \DFd \ar[d]^{z\odot\_} && S\otimes_{S^W}S\ar[r]^-{\rho}\ar[d]_{1\otimes (z\cdot\_)} & \DFd \ar[d]^{z\bullet\_}\\
S\otimes_{S^W}S \ar[r]^-{\rho} & \DFd && S\otimes_{S^W}S \ar[r]^-{\rho} & \DFd}
\]
where $\rho$ is the Borel map and $z\in S_W$ acts on $S$ by the usual left multiplication.

The $\odot$-action restrict to the parabolic case \cite[Lemma~3.11]{LZZ}, i.e., $\DF$ (and hence $W$) acts on $\DPd$ via
\[
(p\de_w)\odot (qf_{\overline{v}})=pw(q)f_{\overline{wv}}, ~p,q\in Q, w,v\in W,
\]
and $\overline{v}$ denotes the respective coset in $W/W_P$.

We know that $\hh_T(G/P)\simeq \DPd=(\DFd)^{W_P}\simeq  \hh_T(G/B)^{W_P}$ (invariants under the right $\bullet$-action of $W_P$). We claim that
\begin{lem}\label{lem:Winvc}
There is a ring isomorphism 
\[
\Hom_{\DF}(\DQd,\DPd)\simeq {}^{W_Q} \DPd={}^{W_Q}(\DFd)^{W_P}
\]
(invariants under the left $\odot$-action of $W_Q$).
\end{lem}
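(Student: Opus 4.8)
The plan is to identify $\Hom_{\DF}(\DQd, \DPd)$ with a space of global sections by exploiting the module structures already set up in the excerpt. First I would recall that $\DQd \subseteq \DFd$ is the image of the projection $Q_{W} \to Q_{W/W_Q}$, and that under this identification $\DQd = (\DFd)^{W_Q}$ for the right Hecke $\bullet$-action. Since $\DFd = \hh_T(G/B)$ is generated over $R$ (or over $S$, via the characteristic map, when the Borel isomorphism holds) in a way compatible with the $\DF$-module structure, the key observation is that $\DQd$ is a \emph{cyclic} $\DF$-module: it is generated by the fundamental class $f_{\bar e}$ (equivalently, by $\sum_{v \in W_Q} f_v$ inside $\DFd$), because applying the push-pull elements $Y_i$ for $i$ in the Levi of $Q$ and then arbitrary $\de_w$ sweeps out all of $\DQd$. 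Granting this, any $\DF$-module homomorphism $\varphi\colon \DQd \to \DPd$ is determined by the single element $\varphi(f_{\bar e}) \in \DPd$, so the map $\varphi \mapsto \varphi(f_{\bar e})$ embeds $\Hom_{\DF}(\DQd,\DPd)$ into $\DPd$.

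Next I would determine the image of this embedding. An element $m \in \DPd$ arises as $\varphi(f_{\bar e})$ for some well-defined $\DF$-linear $\varphi$ precisely when $m$ is annihilated by the annihilator of $f_{\bar e}$ in $\DF$. The left ideal $\mathrm{Ann}_{\DF}(f_{\bar e})$ is exactly what cuts out the $W_Q$-coset relation: for $w \in W_Q$ one has $\de_w \bullet f_{\bar e} = f_{\bar e}$, hence $(\de_w - 1)$ annihilates $f_{\bar e}$, and conversely these relations (together with the ones making $\DQd$ a quotient of $\DFd$) generate the annihilator. But here one must be careful about \emph{which} action is being used: the Hom is taken for the $\DF$-module structure coming from the left $\odot$-action (this is the structure under which $W_Q$ acts on $\DPd$ on the left, as the lemma's right-hand side indicates). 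So I would redo the cyclic-generator analysis for the $\odot$-action: $\DQd$ with the $\odot$-action is cyclic generated by $f_{\bar e}$ as well (the $\odot$-action by $Y_i$ and $\de_w$ again generates everything), and $\de_w \odot f_{\bar e} = f_{\overline{w}}$, which equals $f_{\bar e}$ in $W/W_Q$ iff $w \in W_Q$ — wait, more precisely I would use the description $(p\de_w)\odot(qf_{\bar v}) = p\,w(q) f_{\overline{wv}}$ and track that the stabilizer of $\bar e \in W/W_Q$ under the coset multiplication $w \mapsto \overline{w}$ is $W_Q$. Thus $\varphi(f_{\bar e})$ must be fixed by $\de_w \odot -$ for all $w \in W_Q$, i.e. must lie in ${}^{W_Q}\DPd$; and conversely every element of ${}^{W_Q}\DPd$ extends to an $\DF$-linear $\varphi$. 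This gives the $R$-module (indeed $S^W$-module) isomorphism $\Hom_{\DF}(\DQd,\DPd) \cong {}^{W_Q}\DPd$, and the second identification ${}^{W_Q}\DPd = {}^{W_Q}(\DFd)^{W_P}$ is immediate from $\DPd = (\DFd)^{W_P}$ (noting the left $\odot$ and right $\bullet$ actions commute, as recorded in the two commutative diagrams above, so taking $W_Q$-invariants for $\odot$ and $W_P$-invariants for $\bullet$ can be done in either order).

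Finally I would verify that this $R$-module isomorphism is a \emph{ring} isomorphism, where the ring structure on $\Hom_{\DF}(\DQd,\DPd)$ presumably comes from a correspondence-type product (composition through $\DFd$, or the convolution product implicit in Section~\ref{sec:motives}) and the ring structure on ${}^{W_Q}\DPd$ is the pointwise multiplication inherited from $\DPd = S_{W/W_P}^\star \supseteq \DPd$. This amounts to checking that under $\varphi \mapsto \varphi(f_{\bar e})$ the composition/correspondence product goes to pointwise product, which reduces to a computation with the dual bases $f_v$ and the formulas for $\odot$ and $\bullet$ — routine but needing care about normalization of the fundamental class.

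The main obstacle I expect is pinning down the cyclicity of $\DQd$ as an $\DF$-module for the $\odot$-action and correctly identifying its annihilator: one has to be sure that the $\odot$-action of the $Y_i$'s (not just the $\de_w$'s) together with multiplication by $S$ really does generate all of $\DPd$ from $f_{\bar e}$, and that no extra relations beyond the $W_Q$-invariance constraints appear. Concretely this is where the $\Sigma$-regularity hypothesis on $S$ and the structure theory of $\DF$ from \cite{CZZ2}, \cite{HMSZ} enter, and it is the step where I would spend the most care rather than waving at it.
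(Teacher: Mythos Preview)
Your overall strategy---show that $\DQd$ is cyclic as a $\DF$-module for the $\odot$-action, then evaluate homomorphisms at a generator---matches the paper's. But there is a genuine gap in your choice of generator. The element $f_{\bar e}$ (equivalently $\sum_{v\in W_Q} f_v$ in $Q_W^*$) lives in the localized module $Q_{W/W_Q}^*$ but is \emph{not} in $\DQd$ in general: the class that actually belongs to $\DQd\simeq\hh_T(G/Q)$ is the point class $[pt_Q]=x_Q f_{\bar e}$, where $x_Q=\prod_{\al\in\Sigma^-}x_\al/\prod_{\al\in\Sigma_Q^-}x_\al$. The paper uses this $[pt_Q]$ as the generator (citing \cite[Thm.~3.12]{LZZ} for the fact that it determines any $\DF$-equivariant map), and since $W_Q$ fixes $[pt_Q]$ under $\odot$, one immediately gets the inclusion $\Hom_{\DF}(\DQd,\DPd)\hookrightarrow {}^{W_Q}\DPd$.

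For surjectivity, you propose to compute the annihilator of the generator and show it is exactly the ideal of $W_Q$-invariance relations; you correctly flag this as the delicate step. The paper avoids this computation entirely by passing to the localization, where the analogous isomorphism $\Hom_{Q_W}(Q_{W/W_Q}^*,Q_{W/W_P}^*)\simeq {}^{W_Q}Q_{W/W_P}^*$ is immediate (here $f_{\bar e}$ \emph{is} available), and then pulling back: given $x\in{}^{W_Q}\DPd$, take the localized map $\psi$ with $\psi(f_{\bar e})=x$ and set $\phi(z):=\psi(\tfrac{1}{x_Q}\de_e\bullet z)$. Commutativity of the $\odot$- and $\bullet$-actions (\cite[Lem.~3.4(i)]{LZZ}) makes $\phi$ $\DF$-equivariant, and $\phi([pt_Q])=x$. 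This sidesteps any direct analysis of the annihilator and is where the paper's argument is sharper than your sketch.
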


\begin{proof}
By \cite[Theorem~3.12]{LZZ} any $\DF$-equivariant (w.r.t. $\odot$-action) morphism $\phi$  is uniquely determined by its value $\phi([pt_Q])\in \DPd$ at the class $[pt_Q]=x_Q f_{\bar e}$ of the $T$-fixed point, where $x_Q=\prod_{\al\in \Sigma^-}x_\al/\prod_{\al\in \Sigma_Q^{-}}x_\al$. Since $W_Q$ fixes $[pt_Q]$, it also fixes $\phi([pt_Q])$. Hence, we obtain an inclusion $\Hom_{\DF}(\DQd,\DPd)\hookrightarrow {}^{W_Q} \DPd$, $\phi\mapsto \phi([pt_Q])$. 

Observe that there is an isomorphism $\Hom_{Q_W}(Q_{W/W_Q}^*,Q_{W/W_P}^*)={}^{W_Q} Q_{W/W_P}^*$ for localizations. Hence, given an element $x\in {}^{W_Q}\DPd\hookrightarrow {}^{W_Q} Q_{W/W_P}^*$ there is the corresponding $\psi\in \Hom_{Q_W}(Q_{W/W_Q}^*,Q_{W/W_P}^*)$, $\psi(f_{\bar e})=x$. Define $\phi(z):=\psi(\tfrac{1}{x_Q}\de_e\bullet z)$, $z\in \DQd$. Then $\phi$ is $\DF$-equivariant (as the $\odot$-action  and the $\bullet$-action commute by \cite[Lemma~3.4.(i)]{LZZ}) and $\phi([pt_Q])=x$.
\end{proof}

\paragraph{\it K\"unneth isomorphism.}
Since both $G/Q$ and $G/P$ are $T$-equivariant cellular spaces over $pt$, by \cite[\S3]{NPSZ} there is a K\"unneth isomorphism
\begin{equation}\label{eq:kunn}
\hh_T(G/Q\times G/P) \simeq \Hom_{S} (\hh_T(G/Q),\hh_T(G/P)),
\end{equation}
which sends $a\in \hh_T(G/Q\times G/P)$ to $a_*(x)=p_{Q*}(a\cdot p_P^*(x))$, where $p_P$, $p_Q$ are the respective projections $G/Q\times G/P\to G/P,G/Q$.
Since the projections are $W$-equivariant, we obtain 
\[
a_*(wx)=p_{Q_*}(a\cdot p_P^*(wx))=p_{Q_*}(ww^{-1}a\cdot w p_P^*(x))=w ((w^{-1}a)_*(x)).
\]
Hence, the K\"unneth isomorphism restricts to 
\[
\hh_T(G/Q\times G/P)^W \simeq \Hom_{W\text{-equiv}} (\hh_T(G/Q),\hh_T(G/P)).
\]
Since the left hand side can be identified with $\Hom_{\DF}(\DQd,\DPd)$, by Lemma~\ref{lem:Winvc}  we obtain
\begin{prop}\label{prop:winvis}
There is a ring isomorphism
\[
\hh_T(G/Q\times G/P)^W \simeq \hh_T(G/P)^{W_Q}.
\]
\end{prop}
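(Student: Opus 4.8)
The plan is to deduce the isomorphism by restricting the K\"unneth isomorphism \eqref{eq:kunn} to $W$-invariants and then transporting the resulting space of $W$-equivariant homomorphisms into the language of the formal affine Demazure algebra, where Lemma~\ref{lem:Winvc} finishes the argument. First I would record, exactly as in the computation preceding the statement, that the K\"unneth map $a\mapsto a_*$ intertwines the geometric $W$-actions: since the projections $p_P$ and $p_Q$ are $W$-equivariant morphisms, one has $a_*(wx)=w\bigl((w^{-1}a)_*(x)\bigr)$, so $a$ is $W$-invariant precisely when $a_*$ commutes with the geometric $W$-actions on the equivariant cohomologies of $G/Q$ and of $G/P$. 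Thus \eqref{eq:kunn} restricts to a bijection $\hh_T(G/Q\times G/P)^W\simeq\Hom_{W\text{-equiv}}(\hh_T(G/Q),\hh_T(G/P))$.

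Next I would identify the right-hand side with $\Hom_{\DF}(\DQd,\DPd)$. Under the isomorphisms $\hh_T(G/Q)\simeq\DQd$ and $\hh_T(G/P)\simeq\DPd$ of the parabolic case, the geometric $W$-action is the $\odot$-action of $W\subset\DF$ (via $\de_w\mapsto w$), as recalled in the discussion of the algebraic $W$-action and its parabolic restriction. It then suffices to show that an $S$-linear map $\phi\colon\DQd\to\DPd$ is $\odot$-equivariant for $W$ if and only if it is $\odot$-equivariant for all of $\DF$. One implication is trivial; for the converse, recall that $\DF$ is generated as an $S$-algebra by $S_W\subseteq\DF$ and the push-pull elements $Y_i=\tfrac{1}{x_{-\al_i}}+\tfrac{1}{x_{\al_i}}\de_{s_i}$, so it is enough to treat the $Y_i$. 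Clearing denominators in this relation yields, for $z\in\DQd$, an identity $x_{\al_i}x_{-\al_i}(Y_i\odot z)=x_{\al_i}z+x_{-\al_i}(\de_{s_i}\odot z)$ living inside $\DQd$; applying the $S$-linear map $\phi$ and using that $x_{\al_i}$ (hence $x_{\al_i}x_{-\al_i}$) is a non-zero-divisor in $S$ by $\Sigma$-regularity, one obtains $\phi(Y_i\odot z)=Y_i\odot\phi(z)$. Hence $\Hom_{W\text{-equiv}}(\hh_T(G/Q),\hh_T(G/P))=\Hom_{\DF}(\DQd,\DPd)$, and Lemma~\ref{lem:Winvc} identifies this with ${}^{W_Q}\DPd={}^{W_Q}(\DFd)^{W_P}$, that is, with $\hh_T(G/P)^{W_Q}$ under $\DPd\simeq\hh_T(G/P)$.

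Finally I would check that the composite bijection $\hh_T(G/Q\times G/P)^W\to\hh_T(G/P)^{W_Q}$ is multiplicative for the cup products on both sides, and this is the step I expect to require the most care: the intermediate space $\Hom_{\DF}(\DQd,\DPd)$ carries no obvious intrinsic product when $P\neq Q$, so one must track how the cup product on $\hh_T(G/Q\times G/P)$ is transported through the K\"unneth map, through the module identification with $\Hom_{\DF}(\DQd,\DPd)$, and finally through the ring isomorphism of Lemma~\ref{lem:Winvc} onto ${}^{W_Q}\DPd$ with its product descended from $Q_W^*$, which is exactly the cup product of $\hh_T(G/P)$ restricted to $W_Q$-invariants. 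This bookkeeping is best carried out via the explicit formula $\phi(z)=\psi(\tfrac{1}{x_Q}\de_e\bullet z)$ from the proof of Lemma~\ref{lem:Winvc} together with the commutation of the $\odot$- and $\bullet$-actions (\cite[Lemma~3.4.(i)]{LZZ}). All the underlying module isomorphisms are already in place from the preceding part of the paper, so once this compatibility of products is verified the proposition follows by composition.
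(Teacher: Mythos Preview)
Your approach is essentially the paper's own: restrict the K\"unneth isomorphism to $W$-invariants, identify $W$-equivariant $S$-linear maps with $\Hom_{\DF}(\DQd,\DPd)$, and then invoke Lemma~\ref{lem:Winvc}. The paper compresses this into a single sentence (``Since the left hand side can be identified with $\Hom_{\DF}(\DQd,\DPd)$, by Lemma~\ref{lem:Winvc} we obtain\ldots''), whereas you actually supply the argument for why $S$-linear $W$-equivariance upgrades to full $\DF$-equivariance via the denominator-clearing trick with the $Y_i$; this is a genuine detail the paper simply asserts, and your argument for it is correct (one small point: you invoke non-zero-divisibility of $x_{\al_i}x_{-\al_i}$ in $S$, but what you need is non-zero-divisibility on the $S$-module $\DPd$; this follows since $\DPd$ is $S$-free). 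Your final paragraph on multiplicativity also goes beyond the paper, which states ``ring isomorphism'' but does not verify compatibility with products in the proof; your instinct that this requires care is sound, though in practice the paper effectively recovers the ring structure later, in the proof of Theorem~\ref{thm:main}(A), by exhibiting both sides inside $\bigoplus_{v\in W^P} S$ with coordinate-wise multiplication.
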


\begin{ex}
In the case of Borel isomorphism (e.g. if $R$ contains $\Q$), we have $\hh_T(G/P)^W=S^{W_P}$, $\hh_T(G/Q)^W=S^{W_Q}$ and $\hh_T(G/Q\times G/P)^W\simeq S^{W_Q}\otimes_{S^W} S^{W_P}$.
\end{ex}


\section{Double moment graphs}\label{sec:doublemom}

Consider a root system $\Sigma$ with a set of simple roots $\Theta=\{\al_1,\ldots,\al_n\}$ and a set of positive roots $\Sigma^+$. Let $W$ be the Coxeter group generated by the simple reflections $s_{\al_i}$. Given two subsets $\Theta_P$, $\Theta_Q \subset \Theta$ we denote by $\Sigma_P$, $\Sigma_Q$ the respective root subsystems and by $W_P$ and $W_Q$ the respective Coxeter subgroups. We denote by ${}^QW^P$ the set of minimal length double coset representatives of  $W_Q\backslash W/W_P$. If $\Theta_Q=\emptyset$ (resp. $\Theta_P=\emptyset$), then $W_Q$ (resp. $W_P$) is trivial and we denote by $W^P$ (resp. by ${}^QW$) the set of minimal left (resp. right) coset representatives of $W/W_P$ (resp. $W_Q\backslash W$).

The Bruhat order `$\leq$' on $W$ restricts to ${}^QW^P$ which we can hence consider as a poset itself. Given $u\in {}^QW^P$ consider the subset $\Theta_u=\Theta_Q\cap u(\Sigma^+_P)$ and the respective Coxeter subgroup $W_u$ generated by $\Theta_u$. It can be shown (though not needed) that $W_u=W_Q\cap u W_P u^{-1}$. Denote by $W_Q^u$ the set of minimal length coset representatives of $W_Q/W_u$. 

We will extensively use the following fact (Double Parabolic Decomposition, see, for example, \cite{C}): 
\begin{quote}
An element $y$ of a double coset $W_Q\backslash W/W_P$ can be uniquely written as $y=w u v$, where $u\in  {}^QW^P$, $w\in W_Q^u$ and $v\in W_P$. 
\end{quote}
In particular, any element of $W^P$ can be uniquely written as $wu$, where $u\in  {}^QW^P$ and $w\in W_Q^u$. Given $w\in W$ we denote by $\overline{w}$ (resp. $\widehat{w}$) the minimal length coset representative of $w$ in $W^P$ (resp. in ${}^Q W^P$).

We will also use the following definition of a moment graph that can be found by now in several papers, see, for example, \cite{BMP, F1}.

\begin{dfn} 
The data $\Gg=((V,\le), l\colon E\to \Sigma^+)$ is called a moment graph if
\begin{itemize}
\item[(MG1)] $V$ is a set of vertices together with a partial order `$\le$', i.e., we are given a poset $(V,\le)$.
\item[(MG2)] $E$ is a set of directed edges labelled by positive roots $\Sigma^+$ via the label function $l$, i.e., $E\subset V\times V$ with and edge $(x,y)\in E$ denoted $x\to y$ and labelled by $l(x \to y)\in \Sigma^+$.
\item[(MG3)] For any edge $x\to y\in E$, we have $x\le y$, $x\neq y$, i.e., direction of edges respects the partial order.
\end{itemize}

A moment graph $\Gg'=((V',\le), l\colon E'\to \Sigma^+)$ is called a moment subgraph of~$\Gg$ if $(V',\le)$ is a subposet of $(V,\le)$, $E'\subset E$ and $l'\colon E'\to \Sigma^+$ is the restriction of~$l$. Observe that (MG2) and (MG3) imply that the graph does not have multiple edges or self-loops: (MG2) disallows several edges between the same two vertices in the same direction, and (MG3) disallows pairs of edges between two vertices in the opposite directions and self-loops. Also, (MG3) implies that $\Gg$ has no directed cycles. Note also that the direction of edges in $\Gg$ is uniquely determined by the partial order `$\le$'.
\end{dfn}

\begin{ex} 
Consider the usual Bruhat poset $(W,\le)$. The data 
\[
V:=W,\; E:=\{w\to s_\al w\mid  w\le s_\al w,\, \al\in \Sigma^+\}\;\text{ and }\; l(w\to s_\al w):=\al
\]
define a moment graph. Observe that the transitive closure of $E$ gives the Bruhat order and $E$ contains all cover relations of the Bruhat order. 
\end{ex}

\begin{ex}\label{ex:parabolic}
Given a subset $\Theta_P$ of the set of simple roots $\Theta$, consider the Bruhat subposet $(W^P,\le)$.The data 
\[
V=W^P,\; E=\{w\to \overline{s_\al w}\mid w\le s_\al w,\; \overline{w}\neq \overline{s_\al w},\; \al\in \Sigma^+\}\;\text{ and }\;l(w\to \overline{s_\al w})=\al
\] 
define a moment graph called the parabolic moment graph (associated to $P$) and denoted by $\GP$. Again the transitive closure of $E$ gives the Bruhat order `$\le$' on $W^P$.

To show that $\GP$ has a well-defined label function (i.e. (MG2) holds) we follow \cite{St05}. Let $\Sigma_\R$ be a real vector space with an inner product $(\cdot,\cdot)$ that contains $\Sigma$. Choose a dominant $\theta\in \Sigma_\R$ such that $W_P$ is a stabilizer group of $\theta$. The poset $(W^P,\le)$ can be identified with the poset $(W\theta, \le)$, via $w\mapsto w\theta$ (see \cite[Prop.~1.1]{St05}), where the partial order on the orbit $W\theta$ is defined by taking the transitive closure  of the relations 
\[
\mu< s_\al(\mu)\text{ for all }\al\in \Sigma^+\text{ such that }(\mu,\al)>0.
\]
Now suppose $w\to \overline{s_\al w}$, $w\to \overline{s_{\be} w} \in E$ are such that $\overline{s_\al w}=\overline{s_\be w}$ for $\al,\be\in \Sigma^+$, $w\in W^P$. Then $s_\al(\mu)=s_\be(\mu)$, where $\mu=w\theta< s_\al(\mu)$, which implies that $\al=\be$. In other words, the label $\al$ is uniquely determined by the representative $\overline{s_\al w}$. 
\end{ex}

As the next example shows the definition of $\GP$ can not be directly extended to double cosets

\begin{ex} 
Consider the symmetric group $W=S_3$ generated by simple reflections $s_1,s_2$ corresponding to simple roots $\al_1,\al_2$. Let $W_Q=W_P=\langle s_1 \rangle$. So we have ${}^Q W^P=\{e, s_2\}$. Similar to the parabolic case consider the directed graph given by
\[
V={}^Q W^P\;\text{ and }\; E=\{u\to \widehat{s_\al u}\mid u\le s_\al u,\;\widehat{u}\neq\widehat{s_\al u},\;\; \al\in \Sigma^+\} 
\]
Since $\widehat{s_2e}=\widehat{s_{\al_1+\al_2}e}=s_2$, an edge $e\to s_2$ then has to be labelled by two different positive roots $\al_2$ and $\al_1+\al_2$.
\end{ex}

To avoid this problem, among all labels corresponding to $u\to \widehat{s_\al u}$ we simply choose one. This leads to the following key definition

\begin{dfn}\label{dfn:double} 
Given subsets $\Theta_P$, $\Theta_Q$ of the set of simple roots  $\Theta$, the moment graph on the poset $({}^QW^P,\le)$ with edges 
\[
{}^QE^P=\{u\to u' \mid \exists \be \in \Sigma^+ \, u' = \widehat{s_\be u},\;u \le s_\be u,\, u \ne u' \}
\] 
and labels obtained by choosing a unique $\al\in\Sigma^+$ for each edge $u\to  \widehat{s_\al u}$ is called the $(P,Q)$-double moment graph and denoted by $\QGP$. When the subsets $\Theta_P$ and $\Theta_Q$ are clear from the context we will simply say a double moment graph.

In other words, we have
\[
{}^QE^P=\{u\to u'=\widehat{s_{l(u\to u')} u}, \text{ where $l(u\to u')$ is the chosen label}\}.
\]
\end{dfn}

Though it depends on choices of the labels $l(u\to u')$ we will show that in most of the cases this choice is made only up to an action of $W_Q$, i.e., all multiple labels belong to the same $W_Q$-orbit and all labels in the $W_Q$-orbit can appear. To formalize the latter we introduce the following

\begin{dfn}
Let $\Gg=((V,\le),l\colon E\to \Sigma^+)$ be a moment graph. Suppose a subgroup $H$ of the Weyl group $W$ acts on the set $V$. We say that $\Gg$ is $H\!$-closed if 
\begin{enumerate}
\item[(MGE)] For all $w\in H$ and $x \to y\in E$  
\end{enumerate}
\begin{itemize}
\item if $w(l(x\to y))\in \Sigma^+$, then $w(x) \to  w(y)\in E$ and $l(w(x) \to  w(y))=w(l(x\to y))$, 
\item if $w(l(x\to y))\in \Sigma^-$, then $w(y) \to  w(x)\in E$ and $l(w(y) \to  w(x))=-w(l(x\to y))$.
\end{itemize}
\end{dfn}

Note that a $W\!$-closed moment graph is not a $W$-moment graph in the sense of \cite[Def.~4.2]{FL}.

\begin{lem}
The parabolic moment graph $\GP$ of Example~\ref{ex:parabolic} is $W\!$-closed, where $W$ acts on $W^P$ by $w(v)=\overline{wv}$, $v\in W^P$.
\end{lem}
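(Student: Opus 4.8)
The plan is to verify condition (MGE) directly for the parabolic moment graph $\GP$ with the $W$-action given by $w(v)=\overline{wv}$. First I would fix $w\in W$ and an edge $x\to y\in E$ of $\GP$, so that by the definition in Example~\ref{ex:parabolic} we have $x,y\in W^P$, there is some $\al\in\Sigma^+$ with $y=\overline{s_\al x}$, $x\le s_\al x$, $\overline{x}\neq\overline{s_\al x}$, and $l(x\to y)=\al$. The goal is to analyze the pair $(w(x),w(y))=(\overline{wx},\overline{wy})$ and the root $w(\al)$.

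The key computational observation is that $\overline{wy}=\overline{w\,\overline{s_\al x}}=\overline{ws_\al x}=\overline{ws_\al w^{-1}\cdot wx}=\overline{s_{w(\al)}\,wx}$, using that $\overline{\cdot}$ only depends on the coset in $W/W_P$ and that $ws_\al w^{-1}=s_{w(\al)}$. So if we write $v:=wx$ (an arbitrary element of $W$, whose coset gives $\overline{wx}=w(x)$), the candidate edge at the level of cosets relates $\overline{v}$ and $\overline{s_{w(\al)}v}$, with the sign of $w(\al)$ determining orientation. I would then split into the two cases of the definition. If $w(\al)\in\Sigma^+$: I need $\overline{v}\le \overline{s_{w(\al)}v}$ and $\overline{v}\neq\overline{s_{w(\al)}v}$, which by the identification of $(W^P,\le)$ with $(W\theta,\le)$ from Example~\ref{ex:parabolic} reduces to checking the inequality on the orbit $W\theta$; here one uses that $x\le s_\al x$ with $\overline{x}\neq\overline{s_\al x}$ translates to $x\theta < s_\al(x\theta)$, i.e. $(x\theta,\al)>0$, and then applying $w$ isometrically gives $(wx\theta, w\al)>0$, hence $wx\theta < s_{w\al}(wx\theta)$, which is exactly $w(x)< w(y)$ together with the edge membership; the label is then $w(\al)$ as required. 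If $w(\al)\in\Sigma^-$: set $\be=-w(\al)\in\Sigma^+$; then the same isometry computation gives $(wx\theta,\be)<0$, equivalently $(s_\be wx\theta,\be)>0$, so $s_\be(wx\theta)< wx\theta$ is false — rather $wx\theta > s_\be wx\theta = w y\theta$ — hence $w(y)< w(x)$, the edge runs $w(y)\to w(x)$, and its label is $\be=-w(\al)$, matching (MGE). Throughout I should double-check that the edge-membership conditions ($\overline{\cdot}$ distinct, and that $(v,\be)>0$ genuinely produces an edge and not merely an order relation) are exactly the ones recorded in the definition of $E$ in Example~\ref{ex:parabolic}, which they are since that definition puts an edge $v\to\overline{s_\be v}$ for \emph{every} $\be\in\Sigma^+$ with $v\le s_\be v$ and distinct cosets, not only cover relations.

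The main obstacle I anticipate is bookkeeping around well-definedness of the label rather than the order relation: in $\GP$ the label of an edge $x\to y$ is genuinely a well-defined element of $\Sigma^+$ (this is the content of the argument via $\theta$ and \cite[Prop.~1.1]{St05} recalled in Example~\ref{ex:parabolic}), so I must make sure that the root $w(\al)$ (or $-w(\al)$) I produce for the image edge really is \emph{the} label assigned to that edge by the same recipe, not just \emph{a} positive root realizing it. This follows from the uniqueness statement proved in Example~\ref{ex:parabolic} — the label is determined by the target coset — so once I know $w(y)=\overline{s_{w(\al)}wx}$ with $w(\al)\in\Sigma^+$ (resp. the reversed statement), uniqueness forces $l=w(\al)$ (resp. $-w(\al)$). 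A secondary minor point is to confirm that $w(v)=\overline{wv}$ is a genuine $W$-action on $W^P$, i.e. $w_1(w_2(v))=(w_1w_2)(v)$, which is immediate from $\overline{w_1\,\overline{w_2v}}=\overline{w_1w_2v}$. With these pieces in place the verification of (MGE) is complete and $\GP$ is $W$-closed.
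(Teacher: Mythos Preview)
Your proposal is correct and follows essentially the same argument as the paper: compute $w(y)=\overline{s_{w(\al)}w(x)}$, translate the original edge condition into $(x\theta,\al)>0$, and use that $w$ is an isometry to conclude $(w(x)\theta,w(\al))>0$, then split on the sign of $w(\al)$. Your treatment is slightly more explicit about invoking the label-uniqueness from Example~\ref{ex:parabolic}, but the core reasoning is identical (note a small slip in your negative-case phrasing: you want $wx\theta<s_\be(wx\theta)$ to be false, not $s_\be(wx\theta)<wx\theta$).
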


\begin{proof} 
Suppose $y=\overline{s_\al x}$ with $x\le y$, $\overline{x}\neq\overline{y}$, and $\al\in \Sigma^+$, then $y=s_\al x z$, $z\in W_P$ and we have 
\[
w(y)=w(s_\al x z)=\overline{s_{w(\al)}w(x)}.
\]
Since $\overline{x}< \overline{y}$, we have $x(\theta)=\mu < s_\al(\mu)=y(\theta)$ in $W\theta$, i.e.,  $(\mu,\al)>0$. Then $(\mu,\al)=(w(\mu),w(\al))>0$ which implies that $w(x)(\theta)=w(\mu)< s_{w(\al)}w(\mu)=w(y)(\theta)$ if $w(\al)\in \Sigma^+$ and $w(\mu)> s_{w(\al)}w(\mu)$ if $w(\al)\in \Sigma^-$.
\end{proof}

We then introduce the notion of the $H$-closure of a subgraph.

\begin{dfn} 
Let $\Gg=((V,\le),l\colon E\to \Sigma^+)$ be an $H$-closed moment graph with $H\subset W$. Let $\Gg'=((V',\le),l\colon E'\to \Sigma^+)$ be a subgraph of $\Gg$. By the $H$-closure of $\Gg'$ we call the  subgraph of $\Gg$ denoted by $H(\Gg')$ with vertices $\{wx\mid w\in H,\; x\in V'\}$ and the following edges
\begin{itemize}
\item
$y \to z\in E$ such that there exist $x \in V'$, $v,w \in H$ with $y=w(x)$, $z=v(x)$, 
\item $w(x) \to  w(y)\in E$ labelled by $w(l(x \to y))$ for all $x\to y\in E'$ and $w \in H$ with $w(l(x \to y))\in \Sigma^+$,
\item $w(y) \to  w(x)\in E$ labelled by $-w(l(x \to y))$ for all $x\to y\in E'$ and $w \in H$ with $w(l(x \to y))\in \Sigma^-$. 
\end{itemize}
Roughly speaking, $H(\mathcal{G'})$ is the smallest $H\!$-closed subgraph in $\Gg$ containing $\Gg'$ and all edges of $\Gg$ inside the $H$-orbits of $V'$.
\end{dfn}

Consider a double moment graph  $\QGP$ of Definition~\ref{dfn:double}. Since each minimal double coset representative is also a minimal left coset representative, the double moment graph can be naturally viewed as a subgraph $\Gg'$ of $\GP$ with  
\[
V'=\bigcup_{u\to u'\in {}^Q E^P}\{u,\overline{s_{l(u\to u')} u}\}\text{ and }E'=\{u\to \overline{s_{l(u\to u')} u}\mid u\to u'\in {}^QE^P\}
\]
and the same labels $l(u\to u')$.

\begin{dfn}
The $W_Q$-closure of $\Gg'$ in $\GP$ is called the closure of $\QGP$ and is denoted by $\langle \QGP\rangle$. 
\end{dfn}

\begin{rem}
By definition, a double moment graph $\QGP$ and hence its closure  $\langle \QGP\rangle$ depends on a choice of labels. So for two double moment graphs ${}^Q\Gg_1^P$ and ${}^Q\Gg_2^P$ with the same sets of vertices and edges but with different choices of labels, it is possible that $\langle {}^Q\Gg_1^P\rangle \neq \langle {}^Q\Gg_2^P\rangle$ in $\GP$.
\end{rem}

To compute the closure $\langle \QGP\rangle$ we will use the following

\begin{lem}\label{lem:geomorb}
Each edge of $\GP$ is either of the form $wu \to vu$, where $w,v \in W_Q$, and $u\in {}^Q W^P$, or it can be obtained by applying an element of $W_Q$ to an edge of the form $u \to \overline{s_\al u}$, where $u\in {}^Q W^P$ and $\widehat{s_\al u} \ne u$.
\end{lem}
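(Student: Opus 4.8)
The plan is to analyze an arbitrary edge $x \to z$ of $\GP$ and use the double parabolic decomposition to bring it into one of the two claimed forms. Write $x = wu$ and $z = w'u'$ with $u, u' \in {}^QW^P$ and $w \in W_Q^u$, $w' \in W_Q^{u'}$ (using the decomposition of elements of $W^P$ stated in the excerpt). By definition of $\GP$, there is a positive root $\gamma$ with $z = \overline{s_\gamma x}$, $x \le s_\gamma x$, and $\overline{x} \ne \overline{z}$. The key dichotomy to establish is: either the double coset representatives $u$ and $u'$ coincide, or they do not.

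First I would treat the case $u = u'$. Here $x = wu$ and $z = w'u$ with $w, w' \in W_Q$ (enlarging from $W_Q^u$ to $W_Q$ is harmless for the statement), so the edge is literally of the form $wu \to w'u$ with $w, w' \in W_Q$, which is the first alternative. The only thing to check is that such an edge really is of this shape — that both endpoints lie in the same $W_Q$-coset-times-$u$ — which is exactly the assumption $u = u'$ of this case, so nothing further is needed.

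Next, the case $u \ne u'$. Here I would apply $w^{-1} \in W_Q$ to the edge and invoke the $W$-closedness of $\GP$ (the lemma just before Lemma~\ref{lem:geomorb}): applying $w^{-1}$ sends the edge $wu \to w'u'$ to an edge with source $\overline{w^{-1}wu} = u$ (note $u \in {}^QW^P \subseteq W^P$ so $\overline{u} = u$) and target $\overline{w^{-1}w'u'}$, with the label transformed by $w^{-1}$ (up to sign and possible reversal of the edge). So up to an element of $W_Q$, the edge becomes one with source $u \in {}^QW^P$, say $u \to \overline{s_\beta u}$ for the appropriate positive root $\beta$. It remains to argue that $\widehat{s_\beta u} \ne u$: if we had $\widehat{s_\beta u} = u$, then $s_\beta u$ would lie in $W_Q u W_P$, and combined with $u \in {}^QW^P$ and the double parabolic decomposition one checks that $\overline{s_\beta u}$ would be of the form $w'' u$ with $w'' \in W_Q$; tracing this back through $w$ would force $u' = u$, contradicting our case assumption. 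Hence $\widehat{s_\beta u} \ne u$, and the edge is obtained by applying $w \in W_Q$ to the edge $u \to \overline{s_\beta u}$ of the second claimed form.

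The main obstacle I expect is the bookkeeping in the second case: one must carefully verify that after transporting by $w^{-1}$ the source genuinely becomes the \emph{minimal} double coset representative $u$ (and not merely some element of $W_Q u$), and that the implication "$\widehat{s_\beta u} = u \Rightarrow u' = u$" holds — this is where the uniqueness in the double parabolic decomposition $y = wuv$ does the real work. Everything else is a direct unwinding of the definitions of $\GP$, of $\widehat{\phantom{w}}$ and $\overline{\phantom{w}}$, and of $W$-closedness.
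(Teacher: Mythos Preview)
Your overall strategy --- split on whether $\widehat{x}=\widehat{y}$, and in the second case conjugate by $w^{-1}$ --- is exactly the paper's strategy. The first case is fine. The gap is in the second case, and it is precisely the ``obstacle'' you flag but do not resolve: you must show that applying $w^{-1}$ does \emph{not} reverse the edge, i.e.\ that $w^{-1}(\gamma)\in\Sigma^+$. You write ``up to sign and possible reversal of the edge'' and then proceed as if the source is $u$; but if the edge were reversed the source would be $\overline{w^{-1}z}$, not $u$, and your argument stalls (iterating with the decomposition of $\overline{w^{-1}z}$ leads nowhere without further input).

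The paper handles this point by passing to the orbit realization $W^P\simeq W\theta$ and $(W\theta)_Q\simeq {}^QW^P$ of Stembridge. One has $x<z$ in $W^P$, hence $(w(\mu),\gamma)>0$ where $\mu=u(\theta)$; so $(\mu,w^{-1}(\gamma))>0$. If $w^{-1}(\gamma)\in\Sigma^-$ this would give $\mu>s_{w^{-1}(\gamma)}\mu$, contradicting $\widehat{x}<\widehat{y}$ via the identification of $(W\theta)_Q$ with $({}^QW^P,\le)$. That is the missing step.

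Your route can be completed without the $W\theta$ picture, but you need to insert an argument such as: since $u'=\widehat{z}$ is Bruhat-minimal in its double coset $W_Qu'W_P$, one has $u'\le\overline{w^{-1}z}$ in $W^P$; combined with $u<u'$ (from $\widehat{x}<\widehat{y}$, using \cite[Prop.~2.5.1]{BB}) this yields $u<\overline{w^{-1}z}$, so the transported edge has source $u$ and is not reversed. Without this (or the paper's inner-product argument) the proof is incomplete.
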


\begin{proof}
Let $\theta\in \Sigma_\R$ be a dominant vector with stabilizer $W_P$. Following \cite[\S1B]{St05} consider the subposet of $W\theta$
\[
(W\theta)_{Q}=\{\mu \in W\theta\mid (\mu,\al)\ge 0\text{ for all }\al\in \Sigma_{Q}^+\}
\]
It is isomorphic to the Bruhat subposet $({}^Q W^P,\le)$ via the evaluation map $w\mapsto w\theta$ \cite[Prop.1.5]{St05}.

Consider an edge $x\to y$ in $\GP$ labelled by $\al\in \Sigma^+$. Suppose $\widehat{x}\neq \widehat{y}$, then $\widehat{x} <\widehat{y}$ by \cite[Prop.2.5.1]{BB}. By parabolic decomposition, we have $x=wu$, where $w\in W_Q^u$ and $u\in {}^Q W^P$. Hence,  it corresponds to a vector $w(\mu) \in W\theta$, where $\mu=u(\theta)\in (W\theta)_Q$. Similarly, $y=\overline{s_\al x}$ corresponds to a vector $s_\al w(\mu)$ in $W\theta$. So we have $w(\mu) < s_\al w(\mu)$ that is $(w(\mu),\al)=(\mu,w^{-1}(\al))>0$.

Suppose $w^{-1}(\al)\in \Sigma^+$, then the edge $x\to y$ is obtained from the edge \[u=w^{-1}x\to \overline{w^{-1}y}=\overline{s_{w^{-1}(\al)} u}\] labelled by $w^{-1}(\al)$ by applying $w\in W_Q^u$ (here $\mu < s_{w^{-1}(\al)}\mu$ in $W\theta$).

If $w^{-1}(\al) \in \Sigma^-$, then $\mu > s_{w^{-1}(\al)}\mu$ which contradicts $\widehat{x}< \widehat{y}$.
\end{proof}

We are now ready to introduce another key notion:
\begin{dfn}\label{dfn:closure}
The double moment graph $\QGP$ is called closed if $\langle \QGP\rangle=\GP$.
\end{dfn}

As a consequence of Lemma~\ref{lem:geomorb} and its proof we obtain
\begin{cor}\label{lem:clos} 
The following are equivalent
\begin{enumerate}
\item\label{cond:transonedges}
For every $\mu \in (W\theta)_Q$ and for every two outgoing edges $\mu \to s_{\al}\mu$ and $\mu \to s_{\be}\mu$ such that $s_{\be}\mu \in W_Q s_{\al}\mu$, but $s_{\al}\mu, s_{\be}\mu \notin W_Q \mu$, there exists $w\in W_Q$ such that $w\mu=\mu$ and $\be = w(\al)$ (in other words, $w(\mu \to s_{\al}\mu) = \mu \to s_{\be}\mu$).
\item
For every $u\in {}^Q W^P$ and  $\al,\be\in \Sigma^+$ such that $u<\widehat{s_\al u} = \widehat{s_\be u}$,  there exists  $w\in W_Q$ such that $u=\overline{wu}$ and $s_\be=s_{w(\al)}$.
\item
A double moment graph $\QGP$ is closed.
\item
Every double moment graph with the same vertices and edges as $\QGP$ (but possibly different labels) is closed.
\end{enumerate}
\end{cor}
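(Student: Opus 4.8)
The plan is to prove Corollary~\ref{lem:clos} by establishing a cycle of implications that largely unpacks the definitions, with Lemma~\ref{lem:geomorb} doing the geometric work. First I would record the dictionary between the combinatorial and geometric languages: the isomorphism of posets $({}^QW^P,\le)\cong (W\theta)_Q$ and $(W^P,\le)\cong(W\theta,\le)$ via $w\mapsto w\theta$, under which an edge $u\to\overline{s_\al u}$ in $\GP$ corresponds to $\mu\to s_\al\mu$ in $W\theta$ with $\mu=u\theta$ and $(\mu,\al)>0$, and the $W_Q$-action on $W^P$ by $w\cdot v=\overline{wv}$ corresponds to the linear $W_Q$-action on $W\theta$. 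With this dictionary in hand, the equivalence \eqref{cond:transonedges}$\Leftrightarrow$(2) is a direct translation: $s_\be\mu\in W_Qs_\al\mu$ and $s_\al\mu,s_\be\mu\notin W_Q\mu$ say exactly that $\widehat{s_\al u}=\widehat{s_\be u}$ and neither equals $\widehat u$ (using the parabolic-decomposition bookkeeping and \cite[Prop.~1.5]{St05}), while $w\mu=\mu$, $\be=w(\al)$ translates to $u=\overline{wu}$, $s_\be=s_{w(\al)}$.

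Next I would prove (2)$\Rightarrow$(3), i.e. that condition (2) forces $\langle\QGP\rangle=\GP$. By Lemma~\ref{lem:geomorb}, every edge of $\GP$ is either of the ``vertical'' form $wu\to vu$ with $w,v\in W_Q$, $u\in{}^QW^P$, or obtained by applying some $w\in W_Q$ to an edge $u\to\overline{s_\al u}$ with $\widehat{s_\al u}\ne u$. The vertical edges lie in $H(\Gg')$ by the first clause of the definition of $H$-closure, once we know the relevant $u$ appears as a vertex of $\Gg'$; and the edges of the second type lie in $H(\Gg')$ by the second and third clauses, once we know $u\to\overline{s_\al u}$ (or rather $u\to u'$ for the canonically chosen label) is an edge of $\Gg'$. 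The only gap is that $\QGP$, and hence $\Gg'$, carries a single chosen label $l(u\to u')$ for each edge $u\to u'$, whereas Lemma~\ref{lem:geomorb} produces edges $u\to\overline{s_\al u}$ for potentially several $\al$. Condition (2) is precisely what bridges this: whenever $\widehat{s_\al u}=\widehat{s_\be u}=u'$, the two roots $\al,\be$ are related by an element $w\in W_Q$ stabilizing $u$, so the edge coming from the non-chosen root $\be$ is obtained from the edge $u\to u'$ (with its chosen label) by applying $w$, and thus lies in $H(\Gg')$. Hence every edge of $\GP$ is in $\langle\QGP\rangle$, giving (3). For (3)$\Rightarrow$(2) I would argue contrapositively: if (2) fails for some $u$ and $\al,\be$, then the edge $u\to\overline{s_\be u}$ of $\GP$ (for the root not chosen as the label) cannot be produced by any of the three clauses of the $W_Q$-closure construction from $\Gg'$, because any $w\in W_Q$ carrying an edge of $\Gg'$ to it would have to fix $u$ and send the chosen label to $\pm\be$, contradicting the failure of (2). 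The equivalence (2)$\Leftrightarrow$(4) is then immediate: condition (2) does not reference the chosen labels at all — it only involves the vertices ${}^QW^P$ and the pairs $(u,\widehat{s_\al u})$, i.e. the edges — so if it holds, \emph{every} choice of labels yields a closed graph, and conversely if (4) holds then in particular $\QGP$ itself is closed, which is (3), hence (2).

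The main obstacle I anticipate is the bookkeeping in (3)$\Rightarrow$(2), namely checking carefully that the ``extra'' edge $u\to\overline{s_\be u}$ genuinely cannot arise from the $W_Q$-closure of $\Gg'$ when (2) fails; one must rule out its arising as a \emph{vertical} edge $w(x)\to v(x)$ (i.e. $\{u,\overline{s_\be u}\}$ both lying in a common $W_Q$-orbit of some vertex of $V'$), which is where the hypotheses $s_\al\mu,s_\be\mu\notin W_Q\mu$ — equivalently $\widehat{s_\be u}\ne\widehat u$ — are used, together with the fact that $u$ and $\overline{s_\be u}$ are distinct minimal double coset representatives and hence lie in distinct $W_Q$-orbits of $W^P$. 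A secondary technical point is handling the orientation carefully: applying $w\in W_Q$ to an edge may reverse it (the $w(l(x\to y))\in\Sigma^-$ case of $H$-closedness), so throughout one must track that the inequalities $(\mu,\al)>0$ are preserved or reversed appropriately under the $W_Q$-action, exactly as in the proof of Lemma~\ref{lem:geomorb}. Modulo these checks, the whole statement is a formal consequence of Lemma~\ref{lem:geomorb} and the definitions, so I would present it as a short cycle (1)$\Leftrightarrow$(2)$\Leftrightarrow$(3), (2)$\Leftrightarrow$(4) rather than four separate arguments.
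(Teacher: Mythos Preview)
Your proposal is correct and follows essentially the same route as the paper's proof: the equivalence $(1)\Leftrightarrow(2)$ via the identification $\mu=u(\theta)$, the implication from (2) to closedness via Lemma~\ref{lem:geomorb}, and the converse by tracing how an edge $u\to\overline{s_\be u}$ in $\GP$ must arise from the $W_Q$-closure of the chosen edge $u\to\overline{s_\al u}$. The paper organizes the cycle as $(2)\Rightarrow(4)\Rightarrow(3)\Rightarrow(2)$ rather than your $(2)\Leftrightarrow(3)$, $(2)\Leftrightarrow(4)$, but this is a cosmetic difference; your more explicit treatment of the bookkeeping (ruling out the vertical-edge case via distinct $W_Q$-orbits, and tracking orientation reversal) is sound and in fact more careful than the paper's terse argument.
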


\begin{proof} 
The equivalence $(1) \Longleftrightarrow (2)$ follows as $\mu=u(\theta)$.\\
$(2)\Rightarrow (4)$: If (2) holds, then any edge $u\to \overline{s_\be u}$ in $\GP$ can be obtained by applying some $w\in W_Q$ to the edge $u\to \overline{s_\al u}\in E'$. Hence, by Lemma~\ref{lem:geomorb}, any edge of $\GP$ can be obtained by applying some $w\in W_Q$ to an edge from $E'$.\\ 
$(4) \Rightarrow (3)$ is obvious.\\
$(3) \Rightarrow (2)$: Suppose $\QGP$ is closed and let $u\to \overline{s_\al u}\in E'$ be the respective edge in $\Gg'$ then any edge $u\to \overline{s_\be u}$ in $\GP$ with $\widehat{s_\al u}=\widehat{s_\be u}$ can be obtained by applying some $w\in W_Q$ to  $u\to \overline{s_\al u}$.
\end{proof}

\begin{ex}\label{ex:pqnothing} 
Observe that if $\Theta_Q=\emptyset$ then the double moment graph ${}^Q \GP$ coincides with the parabolic graph $\GP$.

Suppose $\Theta_P=\emptyset$ so that $W_P$ is trivial. Then $\GP$ coincides with the moment graph $\Gg$ for $W$ and the double moment graph ${}^Q \GP$ corresponds to a subgraph $\Gg'$ of $\Gg$ obtained by choosing minimal right coset representatives $u\in {}^Q W$ as vertices.

If $u<\widehat{s_\al u} = \widehat{s_\be u},\; u\in {}^Q W,\; \al,\be\in \Phi^+$ where $\widehat v$ is the minimal coset representative of  $W_Qv$, then $\exists w\in W_Q$ such that $s_\al u = ws_\be u$. Hence, $s_\al s_\be=w\in W_Q$ which implies that either both $s_\al,s_\be\in W_Q$ or $w=1$. Since $u\neq \widehat{s_\al u}=\widehat{s_\be u}$, we conclude that $w=1$ and $s_\al=s_\be$. Lemma~\ref{lem:geomorb} then implies that the $W_Q$-closure of $\Gg'$ coincides with $\Gg$.

Therefore, if $\Theta_P=\emptyset$ or $\Theta_Q=\emptyset$, then any double moment graph $\QGP$ is closed.
\end{ex}

\begin{ex}\label{lem:peverything}
If $\Theta_P=\Theta$, then $W_P=W$, and $\theta = 0$. So $\GP$ contains only one vertex $0$, and no edges.

If $\Theta_Q=\Theta$, we have $W_Q=W$. Observe that there is only one $W_Q$-orbit in $W \theta$, and it is not possible to find $\mu \in W \theta$ and $\al \in \Sigma^+$ such that $s_{\al}\mu \notin W_Q \mu$.

So by Corollary~\ref{lem:clos},  if $\Theta_P=\Theta$ or $\Theta_Q=\Theta$, then any double moment graph $\QGP$ is closed.
\end{ex}


\section{Closed double moment graphs}\label{sec:closedgr}

Observe that our main result (Theorem~ \ref{thm:main}.(B)) says that global sections of the structure sheaf on the closed double moment graph $\QGP$ describe equivariant cohomology of the product $G/Q\times G/P$. In the present section we provide necessary and sufficient conditions for $\QGP$ to be closed for all root systems $\Sigma$ of classical Dynkin types and of type $G_2$.

We first look at simply-laced cases.

\paragraph{\bf Type $A$.}  
Let $\Sigma_\R$ be the subspace of $\R^{n+1}$, $n\ge 1$ for which the coordinates sum to zero. The Weyl group $W=S_{n+1}$ acts on $\R^{n+1}$ by permutations of coordinates. We fix a set of simple roots $\Theta=\{\al_1,\ldots,\al_n\}$.  

Consider the vector $\mu=u(\theta)\in W\theta$, where $\theta$ is dominant with the stabilizer $W_P$ and $u\in {}^Q W^P$. The subgroup $W_Q$ acts on a vector $\mu=(x_1,\ldots,x_{n+1})$ by permutations of coordinates in each of the tuples $\tau_0=(x_1,\ldots,x_{k_1})$, $\tau_1=(x_{k_1+1},\ldots,x_{k_2})$, $\ldots$, $\tau_r=(x_{k_r+1},\ldots,x_{n+1})$, where $\Theta \setminus \Theta_Q=\{\al_{k_1},\ldots,\al_{k_r}\}$.

Consider an edge in the double moment graph connecting $u$ and $\widehat{s_\al u}$ (here we assume $n\ge 2$). Suppose $u<\widehat{s_\al u}=\widehat{s_\be u}$  for some $\al,\be\in \Sigma^+$. Let $s_\al=(i'j')$, $i'<j'$. Since $s_\al(\mu)\neq \mu$, we have $x_{i'}\neq x_{j'}$. Since $\widehat{s_\al u}\neq u$, the coordinates $x_{i'}$ and $x_{j'}$ belong to different tuples, respectively. Similarly, if $s_\be=(i''j'')$, $i''<j''$, then the coordinates $x_{i''}\neq x_{j''}$ belong to different tuples. Since $\widehat{s_\al u}=\widehat{s_\be u}$ and the action by $W_Q$ preserves the numbers of equal coordinates in each tuple, the coordinates $x_{i'}$, $x_{i''}$ (resp. $x_{j'}$, $x_{j''}$) belong to the same tuple $\tau_i$ (resp. $\tau_j$, $i\neq j$), therefore, $x_{i'}=x_{i''}$ and $x_{j'}=x_{j''}$.

Now set $w=(i'i'')(j'j'')$ to be the product of two commuting transpositions. Then, $w\in W_Q$ stabilizes $\mu$ and $s_\be=s_{w(\al)}$ and by~\ref{lem:clos} we obtain

\begin{prop}\label{prop:typeAc}
In type $A$, any double moment graph $\QGP$ is closed.
\end{prop}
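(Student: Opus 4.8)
The plan is to verify condition (2) of Corollary~\ref{lem:clos}, namely that for every $u\in {}^QW^P$ and every pair $\al,\be\in\Sigma^+$ with $u<\widehat{s_\al u}=\widehat{s_\be u}$, there is some $w\in W_Q$ fixing the coset (equivalently fixing $\mu=u(\theta)$, since $\theta$ has stabilizer $W_P$ and $u$ is a minimal representative) such that $s_\be=s_{w(\al)}$. The whole argument is combinatorial, carried out inside the permutation realization: $W=S_{n+1}$ acting on $\Sigma_\R\subset\R^{n+1}$ by coordinate permutations, $W_Q$ acting by permuting coordinates within the blocks (tuples) $\tau_0,\dots,\tau_r$ cut out by $\Theta\setminus\Theta_Q$, and a positive root reflection being a transposition $(ij)$ with $i<j$.

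First I would fix $u$, $\al$, $\be$ as above and translate the hypotheses into statements about $\mu=(x_1,\dots,x_{n+1})$. Writing $s_\al=(i'j')$ and $s_\be=(i''j'')$ with $i'<j'$, $i''<j''$: since $s_\al\mu\ne\mu$ and $s_\be\mu\ne\mu$ we get $x_{i'}\ne x_{j'}$ and $x_{i''}\ne x_{j''}$; since $\widehat{s_\al u}\ne u$ and $\widehat{s_\be u}\ne u$, neither transposition can keep both endpoints in the same $W_Q$-block, so $x_{i'}$ and $x_{j'}$ lie in distinct tuples $\tau_i,\tau_j$ and likewise $x_{i''},x_{j''}$ lie in distinct tuples. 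Next I would exploit the key equality $\widehat{s_\al u}=\widehat{s_\be u}$: this says $s_\al\mu$ and $s_\be\mu$ lie in the same $W_Q$-orbit, i.e.\ have the same multiset of coordinates in each tuple. Comparing the multiset of coordinates of $s_\al\mu$ with that of $\mu$ tuple by tuple, the only tuples affected are the two containing $i',j'$; the change is: $\tau_i$ loses an $x_{i'}$ and gains an $x_{j'}$, and $\tau_j$ loses an $x_{j'}$ and gains an $x_{i'}$. Doing the same for $s_\be\mu$ and demanding the two per-tuple multisets agree forces the pair of ``active'' tuples to be the same $\{\tau_i,\tau_j\}$ and forces $\{x_{i'},x_{j'}\}=\{x_{i''},x_{j''}\}$ as values, matched up by tuple: $x_{i'}=x_{i''}$ and $x_{j'}=x_{j''}$ (after possibly relabelling which of the two is which; the constraints $i'<j'$ etc.\ don't obstruct this since position order within $\Sigma_\R$ and tuple membership are independent).

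Having produced indices $i',i''$ in the same tuple with $x_{i'}=x_{i''}$, and $j',j''$ in the same (other) tuple with $x_{j'}=x_{j''}$, I would set $w=(i'\,i'')(j'\,j'')$, a product of two commuting transpositions each internal to a single $W_Q$-block, hence $w\in W_Q$. Since it only swaps coordinates of equal value, $w\mu=\mu$, so $w$ fixes the coset $u$. Finally $w(\al)$ is the root whose reflection is $w s_\al w^{-1}=w(i'j')w^{-1}=(i''j'')=s_\be$, giving $s_{w(\al)}=s_\be$ as required; then Corollary~\ref{lem:clos} gives that $\QGP$ is closed. I would also dispose of the trivial small cases: $n=1$ (so $W=S_2$, at most two cosets, no multiple-label phenomenon), already flagged by the ``$n\ge2$'' assumption in the edge discussion.

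The only genuinely delicate point — the ``main obstacle'' — is the bookkeeping that turns $\widehat{s_\al u}=\widehat{s_\be u}$ into the coordinate equalities $x_{i'}=x_{i''}$, $x_{j'}=x_{j''}$ \emph{with the correct pairing of tuples}. One must rule out the \emph{a priori} alternative that $s_\be$ swaps, say, the $\tau_i$-slot with a \emph{third} tuple, or that the matching is crossed ($x_{i'}=x_{j''}$, $x_{j'}=x_{i''}$) in a way incompatible with $W_Q$ fixing $\mu$. The clean way to handle this is exactly the per-tuple multiset comparison sketched above: knowing $\mu$, the multiset $M_k$ of $\tau_k$-coordinates of any $W_Q$-orbit representative of $s_\al\mu$ is determined, and it differs from that of $\mu$ in precisely two tuples, by precisely one transferred value; the same data read off from $s_\be$ must coincide, which pins down both the pair of tuples and the transferred values, hence the pairing. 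Once that is in place the rest is immediate.
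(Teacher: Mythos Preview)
Your proposal is correct and follows essentially the same approach as the paper: verify condition~(2) of Corollary~\ref{lem:clos} via the per-tuple multiset invariant, deduce $x_{i'}=x_{i''}$ and $x_{j'}=x_{j''}$ with $i',i''$ (resp.\ $j',j''$) in the same tuple, and take $w=(i'i'')(j'j'')$. One small remark: your parenthetical about ``possibly relabelling'' is unnecessary, since the tuples are consecutive coordinate blocks and $i'<j'$, $i''<j''$ force $i',i''$ into the earlier tuple and $j',j''$ into the later one automatically.
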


\paragraph{\bf Type $D$.} 
We realize a root system of type $D$ as the subset of vectors in $\R^n$, $n\ge 4$
\[
\Sigma=\{\pm(e_i+e_j),\pm(e_i-e_j)\mid i\neq j,\; i,j=1,\ldots,n\},
\]
where $e_1,\ldots,e_n$ are the standard basis vectors. We fix a set of simple roots 
\[
\Theta=\{\al_1=e_1-e_{2},\ldots,\al_{n-1}=e_{n-1}-e_n,\al_n=e_{n-1}+e_n\}.
\]
The Weyl group $W$ acts by permutations and even sign changes, i.e., reflections are given either by usual transpositions or by signed transpositions 
\[
(ij)\colon e_i\mapsto e_j,\; e_j\mapsto e_i\quad \widetilde{(ij)}\colon e_i\mapsto -e_j,\; e_j\mapsto -e_i.
\]

Consider the vector $\mu=u(\theta)\in W\theta$, where $\theta$ is dominant with stabilizer $W_P$ and $u\in {}^Q W^P$. Suppose $\widehat{s_\al u}=\widehat{s_\be u}> u$ for some $\al,\be\in \Sigma^+$. 

\underline{Suppose $\al_{n}\notin \Theta_Q$.} Then $W_Q$ is a subgroup of the symmetric group $S_n$ acting by permutations on 
$\{e_1,\ldots,e_n\}$. Hence, as in type $A$, it acts on a vector
$\mu=(x_1,\ldots,x_{n})$ by permutations of coordinates in each of the tuples 
$(x_1,\ldots,x_{k_1})$, $(x_{k_1+1},\ldots,x_{k_2})$, $\ldots$, $(x_{k_{r}+1},\ldots,x_{n})$, 
where $\Theta\setminus (\Theta_Q\cup\{\al_n\})=\{\al_{k_1},\ldots, \al_{k_r}\}$. We have the following cases for the reflection $s_\al$:
\begin{itemize}
\item[(1)] Let $s_\al=(i'j')$, $i'<j'$. As in type $A$, the coordinates $x_{i'}$ and $x_{j'}$ belong to different tuples 
and $x_{i'}\neq x_{j'}$.

\item[(2)] Let $s_\al=\widetilde{(i'j')}$, $i'<j'$. Since $s_\al(\mu)\neq \mu$, we have $x_{i'}\neq -x_{j'}$ and 
\begin{itemize}
\item[(a)] $x_{i'}$ and $x_{j'}$ belong to different tuples.
\item[(b)] $x_{i'}$ and $x_{j'}$ belong to the same tuple.
\end{itemize}
\end{itemize}
And we have similar cases (1), (2a) or (2b) for $s_\be=(i''j'')$ or $\widetilde{(i'',j'')}$. 
Since $\widehat{s_\al u}=\widehat{s_\be u}$ and the action by $W_Q$ preserves the numbers of equal coordinates in each tuple, 
$s_\be$ corresponds to the same case as $s_\al$ and it interchanges coordinates (changes sign) between the same tuples as $s_\al$, therefore, $x_{i'}=x_{i''}$ and $x_{j'}=x_{j''}$ (in the case (2b) we may permute $i''$ and $j''$). 
Taking $w=(i'i'')(j'j'')$ as before we obtain that $w\in W_Q$ stabilizes $\mu$ and $s_\be=s_{w(\al)}$.

\underline{Suppose  $\al_{n-1},\al_n\in\Theta_Q$.}
The subgroup $W_Q$ acts on the vector 
$\mu=(x_1,\ldots,x_{n})$ by permutations of coordinates in the first $r$ tuples 
$(x_1,\ldots,x_{k_1})$, $(x_{k_1+1},\ldots,x_{k_2})$, $\ldots$, $(x_{k_{r-1}+1},\ldots,x_{k_r})$, 
and by permutations and even sign changes on the last tuple
$\tau=(x_{k_{r}+1},\ldots,x_{n})$, where $\Theta\setminus \Theta_Q=\{\al_{k_1},\ldots, \al_{k_r}\}$.
We have the following cases for the reflection $s_\al$:
\begin{itemize}
\item[(1)] Let $s_\al=(i'j')$, $i'<j'$. As before, $x_{i'}\neq x_{j'}$ and  
$x_{i'}$, $x_{j'}$ belong to different tuples. We then have two subcases
\begin{itemize}
\item[(a)] $x_{j'}$ does not belong to $\tau$.
\item[(b)] $x_{j'}$ belongs to $\tau$.
\end{itemize}

\item[(2)] Let $s_\al=\widetilde{(i'j')}$, $i'<j'$. We have $x_{i'}\neq -x_{j'}$ and 
\begin{itemize}
\item[(a)] $x_{i'}$ and $x_{j'}$ belong to different tuples and $x_{j'}$ does not belong to $\tau$.
\item[(b)] $x_{i'}$ and $x_{j'}$ belong to different tuples and $x_{j'}$ belongs to $\tau$.
\item[(c)] $x_{i'}$ and $x_{j'}$ belong to the same tuple which is not $\tau$.
\end{itemize}
\end{itemize}
And we have similar cases for $s_\be=(i''j'')$ or $\widetilde{(i'',j'')}$. 
Observe that if both $s_\al$ and $s_\be$ correspond to the same case, 
then they interchange coordinates (signs) between the same tuples, hence, we can take the same $w$ as before.

Since $\widehat{s_\al u}=\widehat{s_\be u}$ and the action by $W_Q$ preserves the numbers of equal coordinates in the first $r$ tuples, 
$s_\al$ and $s_\be$ either both correspond to the case (1a), or they both correspond to the case (2a), or they both correspond to the remaining cases.
Hence, we reduce to the situation when $s_\al$, $s_\be$ correspond to different cases among (1b), (2b) or (2c).

Suppose $s_\al$ corresponds to (1b) and $s_\be$ corresponds to (2b), then we can take $w=(i'i'')\widetilde{(j'j'')}$ which is in $W_Q$, stabilizes $\mu$ and
$\be=w(\al)$.

Suppose $s_\al$ corresponds either to (1b) or (2b) and $s_\be$ corresponds to (2c). 
Without loss of generality we may assume $x_{j''}=0$, which implies 
$x_{i'}=x_{i''}=-x_{j'}\neq 0$ for (1b) and $x_{i'}=x_{i''}=x_{j'}\neq 0$ for (2b). If $\tau$ does not have zeros as coordinates, then $s_\al(\mu)$ and $s_\be(\mu)$ belong to different $W_Q$-orbits, a contradiction. Therefore, $\tau$ must contain a zero coordinate.

Observe that $s_\al$ and $s_\be$ are not $W_Q$-conjugate
(if they are conjugate by an element of $W_Q$, then $j'$, $j''$ have to be in the same tuple as well as $i'$, $i''$). Moreover, if $\tau$ contains a zero coordinate and $x_{j''}=0$, $x_{i'}=x_{i''}=-x_{j'}\neq 0$ (resp. $x_{j''}=0$, $x_{i'}=x_{i''}=x_{j'}\neq 0$), then $(i'j')(\mu)$ (resp. $\widetilde{(i'j')}(\mu)$) and $\widetilde{(i'',j'')}(\mu)$ belong to the same $W_Q$-orbit but are not $W_Q$-conjugate.
Moreover, if $x_i>0$ (it is always possible to take $\mu$ so that this is true), then $\mu < (i'j')(\mu)$ 
(resp. $\mu < \widetilde{(i'j')}(\mu)$) and $\mu < \widetilde{(i'',j'')}(\mu)$.

Summarizing we obtain the following

\begin{prop}
In type $D$, a double moment graph $\QGP$ is not closed
if and only if both $\Theta'=\Theta_P,\Theta_Q$ satisfy
\[
{\rm (i)}~\;\al_{n-1},\al_n \in \Theta'\;\text{ and }\;{\rm (ii)}~\Theta'\neq \{\al_s,\al_{s+1},\ldots,\al_n\}\text{ for any }s\le n-1.
\]
\end{prop}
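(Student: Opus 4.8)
The plan is to apply the characterization of closedness in Corollary~\ref{lem:clos}, and in particular condition (1): a double moment graph $\QGP$ fails to be closed precisely when there exist $\mu\in(W\theta)_Q$ and two outgoing edges $\mu\to s_\al\mu$, $\mu\to s_\be\mu$ with $s_\be\mu\in W_Q s_\al\mu$ and $s_\al\mu,s_\be\mu\notin W_Q\mu$, but no $w\in W_Q$ with $w\mu=\mu$ and $\be=w(\al)$. So the statement will follow if I show: (a) whenever $\Theta_P$ and $\Theta_Q$ are such that at least one of them, say $\Theta'$, either fails (i) (i.e.\ not both $\al_{n-1},\al_n\in\Theta'$) or satisfies (i) but fails (ii) only in the sense of being a tail $\{\al_s,\dots,\al_n\}$, then every double moment graph is closed; and (b) when both $\Theta_P$ and $\Theta_Q$ satisfy (i) and (ii) there is an explicit $\mu$ and a pair of bad edges with no stabilizing conjugator, so $\QGP$ is not closed. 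The case analysis of type $D$ already carried out in the excerpt does essentially all of (a): the argument there splits according to whether $\al_n\in\Theta_Q$ (then $W_Q\subset S_n$ acts by permutations and the type-$A$-style product-of-transpositions conjugator works in all subcases), or $\al_{n-1},\al_n\in\Theta_Q$, where all subcases except the clash of (1b)/(2b) against (2c) are handled by a $W_Q$-element of the form $(i'i'')(j'j'')$ or $(i'i'')\widetilde{(j'j'')}$.

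First I would make the role of condition (ii) explicit. The only genuinely obstructing configuration is: $s_\al$ of type (1b) or (2b) and $s_\be=\widetilde{(i''j'')}$ of type (2c), with $x_{j''}=0$ and $x_{i'}=x_{i''}=\pm x_{j'}\neq0$, and this requires the last tuple $\tau=(x_{k_r+1},\dots,x_n)$ of $\mu$ to contain a zero coordinate while $x_{i'},x_{i''},x_{j'}$ lie in an \emph{earlier} tuple. Such a $\mu\in(W\theta)_Q$ with the prescribed pattern of (non)zeros and coincidences exists for a dominant $\theta$ with stabilizer $W_P$ exactly when $\Theta_P$ is compatible with having a zero entry in positions indexed by $\tau$ and a repeated nonzero entry straddling two earlier $W_Q$-tuples — and this is precisely the condition that $\Theta_Q$ satisfies (i) (so that the last tuple carries the full signed action and ``zero in $\tau$'' is $W_Q$-meaningful, not just a coordinate that could be moved out) together with $\Theta_Q$ satisfying (ii), i.e.\ $\Theta_Q$ is not a tail $\{\al_s,\dots,\al_n\}$ (if it were a tail, then $\tau$ would be \emph{all} of $\mu$ up to the split points and there would be no ``earlier tuple'' in which to place the repeated nonzero pair $x_{i'}=x_{i''}$). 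Symmetrically the same analysis with the roles of $\mu$ and $s_\al\mu$ — equivalently the constraint coming from $\theta$ having stabilizer $W_P$ rather than $W_Q$ — forces the same two conditions on $\Theta_P$. I would spell this symmetry out by noting that an edge $u\to\widehat{s_\al u}$ with $u<\widehat{s_\al u}=\widehat{s_\be u}$ imposes, via $u\in{}^QW^P$, simultaneous membership $u(\theta)\in(W\theta)_Q$ and a dominance/regularity pattern dictated by $\theta$ whose stabilizer is $W_P$; the ``no zero available'' obstruction in the proof above is really an obstruction about which coordinates of $\theta$ may be made equal, and that is governed by $\Theta_P$ in exactly the same combinatorial way that the $W_Q$-tuple structure is governed by $\Theta_Q$.

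Concretely, for (b) I would exhibit the counterexample: assume both $\Theta_P,\Theta_Q$ satisfy (i) and (ii). Using (ii) for $\Theta_Q$, choose indices $i'<j'$ lying in two distinct $W_Q$-tuples, neither of which is the last tuple $\tau$, and an index $j''\in\tau$; using (i) for $\Theta_Q$ the last tuple admits even sign changes so $\widetilde{(i'j')}$ and $\widetilde{(i'j'')}$-type reflections are available. Using (i) and (ii) for $\Theta_P$, pick a dominant $\theta$ with stabilizer $W_P$ such that the vector $\mu=u(\theta)$ (for a suitable $u\in{}^QW^P$) has $x_{j''}=0$, $x_{i'}=x_{i''}=-x_{j'}\neq0$ with $i''$ in the same tuple as $i'$ (this is where one needs $\Theta_P$ to permit both a zero coordinate at the $\tau$-position and a repeated nonzero value at two earlier positions without collapsing the difference $\al$ — precisely conditions (i),(ii) for $\Theta_P$). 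Then $\mu\to(i'j')(\mu)$ and $\mu\to\widetilde{(i''j'')}(\mu)$ are two outgoing edges of $\GP$ with $\widetilde{(i''j'')}(\mu)\in W_Q(i'j')(\mu)$ but, as shown in the excerpt's computation, $(i'j')$ and $\widetilde{(i''j'')}$ are not $W_Q$-conjugate by any element fixing $\mu$, so condition (1) of Corollary~\ref{lem:clos} fails and $\QGP$ is not closed. Conversely, assembling the cases of (a): if $\Theta_Q$ fails (i) we are in the ``$\al_n\notin\Theta_Q$'' branch where the type-$A$ conjugator always works; if $\Theta_Q$ satisfies (i) but fails (ii) by being a tail, then $\tau$ has no ``earlier tuple'' and the obstructing (2c)-vs-(1b)/(2b) clash cannot arise (every repeated-coordinate transposition already lives inside $\tau$, handled by case (2c)-alone with $w=(i'i'')(j'j'')$); and symmetrically for $\Theta_P$. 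Hence if either $\Theta'$ violates (i) or (ii) in the tail sense, $\QGP$ is closed, completing the ``only if'' direction.

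The main obstacle I anticipate is bookkeeping in step (b): verifying that a dominant $\theta$ with stabilizer exactly $W_P$ can be chosen so that $\mu=u(\theta)$ realizes the required pattern (one zero in $\tau$, one repeated nonzero value across two non-$\tau$ tuples, all while $u\in{}^QW^P$, i.e.\ $\mu\in(W\theta)_Q$ and $\mu$ dominant up to $W_P$) — this is where conditions (i) and (ii) for $\Theta_P$ are actually \emph{used}, and it must be checked that these are not only sufficient but necessary for such a $\mu$ to exist. Equivalently I must rule out the possibility that, even though the $W_Q$-side admits the bad configuration, the $W_P$-side (through the shape of $\theta$) forbids the coincidence $x_{i'}=x_{i''}$ or the value $x_{j''}=0$; the symmetry of the two conditions in $\Theta_P$ and $\Theta_Q$ strongly suggests this works, but making the interchange of roles rigorous — essentially that the constraint ``$u$ is a \emph{minimal} double coset representative'' treats the two parabolics dually — is the delicate point, and I would devote the appendix-style care there that the authors reserve for type $B$.
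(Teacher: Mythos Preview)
Your overall strategy is right: use Corollary~\ref{lem:clos} and isolate the only genuine obstruction in the case analysis, namely $s_\al$ of type (1b)/(2b) against $s_\be$ of type (2c). But two concrete points go wrong.

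First, your explicit counterexample in part (b) has the indices in the wrong tuples. You place $i',j'$ in two distinct non-$\tau$ tuples (that is case (1a)) and $j''\in\tau$ (that makes $s_\be$ of type (2b)). But (1a) versus (2b) is \emph{not} the obstructing pair: the paper's analysis shows that if $s_\al$ is of type (1a) then $s_\be$ must also be, and these are handled by $w=(i'i'')(j'j'')$. The genuine clash is (1b)/(2b) against (2c): $j'$ must lie \emph{in} $\tau$, and \emph{both} $i'',j''$ must lie in the \emph{same} non-$\tau$ tuple $\tau'$ (so also $i'\in\tau'$). With $x_{j''}=0$, $x_{i'}=x_{i''}=-x_{j'}\neq 0$, and a further zero coordinate in $\tau$ (to absorb the parity of sign changes), one gets $s_\al\mu\in W_Q s_\be\mu$ with no $W_Q$-conjugator fixing $\mu$. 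Your construction as written does not even give $s_\al\mu$ and $s_\be\mu$ in the same $W_Q$-orbit.

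Second, the appeal to a $P\leftrightarrow Q$ ``symmetry'' is not how the conditions on $\Theta_P$ enter, and it is not clear such a duality argument can be made rigorous here. The roles are genuinely asymmetric: $\Theta_Q$ determines the tuple structure of the $W_Q$-action, while $\Theta_P$ determines the shape of $\theta$. The paper's proof is a direct dictionary: (i) for $\Theta_Q$ $\Leftrightarrow$ $|\tau|\ge 2$; (ii) for $\Theta_Q$ $\Leftrightarrow$ some non-$\tau$ tuple $\tau'$ has $\ge 2$ coordinates (so case (2c) is available); (i) for $\Theta_P$ $\Leftrightarrow$ $\theta$ has $\ge 2$ zero coordinates (one can go in $\tau$, one in $\tau'$); (ii) for $\Theta_P$ $\Leftrightarrow$ $\theta$ has a repeated nonzero value (so $\mu$ can carry equal nonzero entries in $\tau$ and in $\tau'$, giving $x_{i'}=x_{i''}=-x_{j'}$). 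All four are needed to build the obstructing $\mu$, and if any one fails the (1b)/(2b)-vs-(2c) clash cannot occur, so the preceding case analysis gives closedness. Your statement ``if $\Theta_Q$ is a tail then $\tau$ has no earlier tuple'' is also off: there \emph{are} earlier tuples, they are just singletons, which is exactly why (2c) cannot occur. Replacing the symmetry heuristic by this four-line dictionary closes the gap.
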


\begin{proof} If condition~(i) for $\Theta_Q$ holds, then the tuple $\tau$ above contains at least two coordinates.
Condition~(ii) for $\Theta_Q$ says that some other tuple contains at least two coordinates, so the case (2c) above may appear.
Condition~(i) for $\Theta_P$ is equivalent to the condition that $\mu$ has at least two zero coordinates. Permuting coordinates we may assume
that one zero coordinate belongs to the last tuple $\tau$ and another one belongs to a different tuple $\tau'$.
Condition~(ii) for $\Theta_P$ means that $\mu$ contains two non-zero coordinates such that one belongs to $\tau$ and another one to $\tau'$.
\end{proof}

The techniques used to treat non-simply laced cases is very similar, though requires more computations in type $B$.
We refer to the Appendix for all the details. Below we only state the final results.

\paragraph{\bf Type $B$.}
We realize a root system of type $B$ as a subset of vectors in $\R^n$, $n\ge 2$
\[
\Sigma=\{\pm(e_i+e_j),\pm(e_i-e_j),\pm e_i\mid i\neq j,\; i,j=1,\ldots,n\},
\]
where $e_1,\ldots,e_n$ are the standard basis vectors.
We fix a set of simple roots 
\[\Theta=\{\al_1=e_1-e_{2},\ldots,\al_{n-1}=e_{n-1}-e_n,\al_n=e_n\}\]
and its subsets $\Theta_P$ and $\Theta_Q$.

The Weyl group $W$
acts by permutations and sign changes, and the
reflections are given by usual transpositions, by signed transpositions, and by sign changes
\[(ij)\colon e_i\mapsto e_j,\; e_j\mapsto e_i\quad 
\widetilde{(ij)}\colon e_i\mapsto -e_j,\; e_j\mapsto -e_i,\quad \widetilde{(i)}\colon e_i\mapsto -e_i.\]
The dominant weights are the points whose coordinates are nonnegative integers and form a non-increasing sequence.

\begin{ex}
For the Weyl group $W=\langle s_1=(12),s_2=\widetilde{(2)}\rangle$ of type $B_2$ set $\Theta_P=\{\al_2\}$ and $\Theta_Q=\{\al_1\}$. Then 
\[W^P=\{e,s_1,s_2s_1,s_1s_2s_1\}.
\]
Set $\theta=e_1=\al_1+\al_2$ to be the dominant vector with stabilizer $W_P$. Then in terms of the $W$-orbit of $\theta$ we get
\[
W^P=\{(1,0),(0,1),(0,-1),(-1,0)\}.
\]
As for double cosets we obtain ${}^Q W^P=\{e, s_2s_1\}=\{(1,0),(0,-1)\}$. We have
\[
\widehat{s_{e_1+e_2}}=\widehat{s_2s_1s_2}=s_2s_1\text{ and }\widehat{s_{e_1}}=\widehat{s_1s_2s_1}=s_2s_1.
\]
Observe that the roots $e_1$ and $e_1+e_2$ belong to different $W_Q$-orbits.
\end{ex}

\begin{prop}\label{prop:typeB}
In type $B$, a double moment graph $\QGP$ is closed
if and only if 
one of the following is satisfied 
\begin{itemize}
\item There exist numbers $p$ and $q$ ($1 \le p, q \le n$)
such that 
$\Theta_P=\{\al_p,\ldots, \al_n\}$, 
$\Theta_Q=\{\al_q,\ldots,\al_n\}$.
\item 
$\al_n \notin \Theta_P$ and $\al_n \notin \Theta_Q$.
\item Either $\Theta_P$ or $\Theta_Q$ coincides with $\Theta$.
\item Either $\Theta_P$ or $\Theta_Q$ is empty.
\end{itemize}
\end{prop}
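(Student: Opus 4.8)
The plan is to invoke Corollary~\ref{lem:clos}, which turns closedness of $\QGP$ into the statement: for every $u\in{}^QW^P$ and every pair $\al,\be\in\Sigma^+$ with $u<\widehat{s_\al u}=\widehat{s_\be u}$, there is $w\in W_Q$ fixing $\mu:=u(\theta)$ and satisfying $s_\be=s_{w(\al)}$. Here $\theta\in\Sigma_\R$ is dominant with stabiliser $W_P$ and $\mu$ ranges over the subposet $(W\theta)_Q$ of Lemma~\ref{lem:geomorb}. As in types $A$ and $D$ one translates everything into coordinates: $W_Q$ permutes the entries of $\mu=(x_1,\dots,x_n)$ within the tuples cut out by the connected components of $\Theta_Q$, the one new feature being that the component through $\al_n$, when present, has type $B$, so $W_Q$ may also change signs of the coordinates in the last tuple. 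Two dictionary entries get used repeatedly: $\al_n\in\Theta_P$ holds exactly when $\mu$ has a vanishing coordinate, and $\al_n\in\Theta_Q$ holds exactly when $W_Q$ can flip the sign of some coordinate. The degenerate configurations $\Theta_P=\Theta$, $\Theta_Q=\Theta$, $\Theta_P=\emptyset$ or $\Theta_Q=\emptyset$ are already handled by Examples~\ref{ex:pqnothing} and~\ref{lem:peverything}, so from now on $\Theta_P$ and $\Theta_Q$ are proper and non-empty.

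For the ``if'' direction I would treat the two remaining admissible cases separately, showing in each that every coincidence $\widehat{s_\al u}=\widehat{s_\be u}>u$ is accounted for by a $\mu$-fixing element of $W_Q$. When $\al_n\notin\Theta_P$ and $\al_n\notin\Theta_Q$, the vector $\mu$ has no zero coordinate and $W_Q\le S_n$ acts purely by permutations; one then runs the type-$A$ bookkeeping of Proposition~\ref{prop:typeAc} and the type-$D$ argument, checking in addition that the two further reflection types available in type $B$ — signed transpositions $\widetilde{(ij)}$ and sign changes $\widetilde{(i)}$ — create no new coincidence, since any such coincidence would force two coordinates of $\mu$ to be opposite or one of them to vanish. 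When $\Theta_P=\{\al_p,\dots,\al_n\}$ and $\Theta_Q=\{\al_q,\dots,\al_n\}$ are both tails, $W_Q$ is the Weyl group of type $B_{n-q+1}$ acting on the last $n-q+1$ coordinates and trivially on the first $q-1$, while $\mu$ has exactly $p-1$ nonzero coordinates with pairwise distinct absolute values. Here I would exploit that distinctness: comparing $s_\al\mu$ and $s_\be\mu$ at the $W_Q$-fixed positions $1,\dots,q-1$ pins down where the discrepancy can sit, and the only surviving coincidences come from transposing two of the (several) zero coordinates inside the last tuple, a move realised by a genuine element of $W_Q$ fixing $\mu$ and conjugating $s_\al$ to $s_\be$.

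For the ``only if'' direction I would exhibit an explicit obstruction whenever $\al_n\in\Theta_P\cup\Theta_Q$ and $\Theta_P,\Theta_Q$ are not both tails. The obstruction always takes the shape of a short root $\al=e_i$ and a long root $\be\in\{e_i+e_j,\;e_i-e_j\}$ chosen so that $s_\al\mu$ and $s_\be\mu$ lie in one $W_Q$-orbit while both escape $W_Q\mu$. If $\al_n\in\Theta_Q$ and $\Theta_P$ is neither empty nor a tail (so $\mu$ can carry a repeated positive value), one places $x_i=x_j=c>0$ with $i$ outside and $j$ inside the sign-flippable last tuple and takes $\be=e_i+e_j$: flipping the sign of $x_j$ shows $s_{e_i}\mu$ and $s_{e_i+e_j}\mu$ are $W_Q$-equivalent. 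If instead $\al_n\in\Theta_P$, so $\mu$ carries a zero, one places a positive $x_i$ and a zero $x_j$ in a common type-$A$ tuple of size $\ge2$ (which is available precisely because $\Theta_Q$ is not a tail, the remaining sub-case) and again uses $\be=e_i+e_j$. In either case $s_\al$ and $s_\be$ are reflections in roots of different length, hence never conjugate in $W$, so no $w\in W_Q$ can satisfy $s_\be=s_{w(\al)}$; by Corollary~\ref{lem:clos} the graph is not closed. The main obstacle is the case analysis for the ``if'' direction, i.e.\ proving that the displayed coincidences are the only ones: this requires organising the pairs $(\al,\be)$ by the lengths of the two roots and by which kind of tuple (a singleton, a type-$A$ component, or the type-$B$ last tuple) hosts each moved coordinate and whether it vanishes, all while keeping track of the asymmetric roles of $\Theta_P$, which governs the zeros of $\mu$, and $\Theta_Q$, which governs sign-flippability. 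We carry out this bookkeeping in the Appendix.
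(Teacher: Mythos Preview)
Your strategy coincides with the paper's: reduce via Corollary~\ref{lem:clos} to a coordinate problem, handle the ``if'' direction by a case analysis on reflection types, and produce short/long root obstructions for the ``only if'' direction. Your counterexamples are exactly the ones in the paper's Lemma~\ref{lem:typeBncessary}, and your logical reduction (case~1 versus ``the remaining sub-case'') is the content of the paper's proof of Proposition~\ref{prop:typeB}.

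Two intermediate claims in your ``if'' sketch are inaccurate and would not survive the promised appendix bookkeeping without revision. First, in the case $\al_n\notin\Theta_P$, $\al_n\notin\Theta_Q$, it is not true that coincidences involving $\widetilde{(ij)}$ or $\widetilde{(i)}$ force opposite or vanishing coordinates. The signed transposition $\widetilde{(ij)}$ with $i,j$ in the same tuple (what the paper calls case~(2c)) and the sign change $\widetilde{(i)}$ (case~(3)) both genuinely occur, and one must check that a~(2c)--(2c) or (3)--(3) coincidence is realised by an element of $W_Q$ fixing $\mu$; the paper does this by tracking the function $d_{s_\al\mu}-d_\mu$ of Definition~\ref{dfn:invdesc}, which counts (unsigned) multiplicities in each tuple. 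Second, in the tail case your summary ``the only surviving coincidences come from transposing two of the zero coordinates inside the last tuple'' misses the (1b)--(2b) cross coincidences (where $x_{j'}=-x_{j''}$ inside $\tau$ and one corrects with $w=(i'i'')\widetilde{(j'j'')}$) and the (3)--(3) coincidences among sign changes of singleton coordinates. The paper's Lemmas~\ref{lem:typeBbothPQtypeB}--\ref{lem:typeBsufficient} organise these cases via the invariant-description function; your appendix will need something equivalent rather than the type-$A$/type-$D$ argument alone.
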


\begin{rem}
In other words, in type $B$, a double moment graph $\QGP$ is closed
if and only if 
one of the following is satisfied 
\begin{itemize}
\item Both $\Theta_P$ and $\Theta_Q$ are subsystems of type $B$ (for this we need 
to specify that the subsystem generated by $\al_n$ only is called a subsystem of type $B_1$, 
while the subsystem generated by any other simple root alone is not called a subsystem of type $B_1$).
\item 
Both $\Theta_P$ and $\Theta_Q$ do not contain a type $B$ subsystem,
keeping in mind the same remark about subsystems of type $B_1$.
\item Either $\Theta_P$ or $\Theta_Q$ coincides with $\Theta$.
\item Either $\Theta_P$ or $\Theta_Q$ is empty.
\end{itemize}
\end{rem}

\paragraph{\bf Type $C$.} 
The root system of type $C_n$ is dual to the root system of type $B_n$.
This duality induces a bijection between the roots systems and an isomorphism between the respective Weyl groups which identifies
simple roots/reflections and preserves their numbering.
Moreover, it identifies the reflection along a root $\al$ with the reflection
along a multiple of $\al$, i.e., with the same reflection;
the bijection between the root systems is equivariant 
under the Weyl group action.

Condition \ref{cond:transonedges} of Corollary~\ref{lem:clos} is stated only in terms of the Weyl group, 
its subgroups generated by some simple reflections, its action on the ambient space of the root system and 
on the root system itself, and of reflections along individual roots. 
Therefore, by Corollary \ref{lem:clos}, Proposition \ref{prop:typeB} can be stated in exactly the same way for type C:

\begin{prop}
In type $C$, a double moment graph $\QGP$ is closed
if and only if 
one of the following is satisfied 
\begin{itemize}
\item There exist numbers $p$ and $q$ ($1 \le p, q \le n$)
such that 
$\Theta_P=\{\al_p,\ldots, \al_n\}$, 
$\Theta_Q=\{\al_q,\ldots,\al_n\}$.
\item 
$\al_n \notin \Theta_P$ and $\al_n \notin \Theta_Q$.
\item Either $\Theta_P$ or $\Theta_Q$ coincides with $\Theta$.
\item Either $\Theta_P$ or $\Theta_Q$ is empty.\qed
\end{itemize}
\end{prop}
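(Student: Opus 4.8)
The plan is to reduce the type $C$ statement to the already-established type $B$ statement (Proposition~\ref{prop:typeB}) via Langlands duality of root systems. Realize $C_n$ as $\{\pm 2e_i,\ \pm(e_i\pm e_j)\mid i\neq j\}$ in $\R^n$; then the map sending each $B_n$-root to the $C_n$-root spanning the same line (explicitly $e_n\mapsto 2e_n$ and the identity on all other roots) is a bijection $\Sigma(B_n)\to\Sigma(C_n)$ taking positive roots to positive roots, simple roots to simple roots, and preserving the numbering of the simple roots. Since the reflection along a root depends only on the line it spans, this bijection is equivariant for the Weyl-group actions on the common ambient space $\R^n$ and induces an isomorphism of Coxeter systems matching the $i$-th simple reflection with the $i$-th simple reflection; in particular, for any fixed $\Theta_P,\Theta_Q\subseteq\Theta$ it identifies the parabolic subgroups $W_P$ and $W_Q$, the sets of minimal double coset representatives, the orbit $W\theta$ of a dominant $\theta$ with a prescribed stabilizer, and the subposet $(W\theta)_Q$.

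Next I would inspect Condition~(1) of Corollary~\ref{lem:clos}, the condition labelled \ref{cond:transonedges}: it refers only to $W$, its subgroups generated by subsets of simple reflections, its action on the ambient vector space, the root system viewed as a $W$-set, and reflections along individual roots. Every one of these ingredients is transported verbatim by the duality bijection above, so Condition~\ref{cond:transonedges} holds for the data $(\Sigma(C_n),\Theta_P,\Theta_Q)$ if and only if it holds for $(\Sigma(B_n),\Theta_P,\Theta_Q)$. By Corollary~\ref{lem:clos} this condition is equivalent to closedness of $\QGP$; hence $\QGP$ is closed in type $C$ exactly when the correspondingly-labelled double moment graph is closed in type $B$, and Proposition~\ref{prop:typeB} then yields the four stated conditions on $\Theta_P$ and $\Theta_Q$ word for word.

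The argument is essentially formal, so I do not expect a genuine obstacle; the only point deserving care is verifying that the duality bijection really preserves all the relevant structure — namely that passing from a root to a positive scalar multiple leaves the associated reflection unchanged and that the bijection is $W$-equivariant on $\R^n$ — which is immediate from the standard descriptions of $B_n$ and $C_n$. Once this is in hand one could even bypass the explicit type $C$ analysis and simply conclude by transporting the type $B$ computation through Corollary~\ref{lem:clos}.
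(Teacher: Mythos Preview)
Your proposal is correct and follows essentially the same approach as the paper: the paper also argues that the $B_n$/$C_n$ duality identifies the Weyl groups, simple roots (with their numbering), and the reflections along individual roots, observes that Condition~\ref{cond:transonedges} of Corollary~\ref{lem:clos} is phrased entirely in these terms, and then transports Proposition~\ref{prop:typeB} verbatim. Your write-up supplies a bit more detail (the explicit realization and the explicit bijection), but the strategy and the key observation are identical.
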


\paragraph{\bf Type $G_2$.} 
The root system of type $G_2$ can be constructed as the union of two root systems of type $A_2$, one of which
is obtained from the other one by stretching $\sqrt{3}$ times and by rotating by $\pi/6$.
The following remark follows directly from this construction and the fact that the angles 
between roots in $A_2$ are all possible multiples of $\pi/3$.
\begin{rem}\label{rem:g2orthogonal}
For each root $\al \in \Sigma$ there exists a root $\be \in \Sigma$ orthogonal to $\al$, 
and the length of $\be$ is different from the length of $\al$.\qed
\end{rem}
There are two simple roots, one short (denote it by $\al_1$) and one long (denote it by $\al_2$).
The angle between them is $5\pi / 6$. The Weyl group consists 
of all reflections along the roots and of rotations by multiples of $\pi/3$. 
It acts transitively on all long roots and (separately) on all short roots.

\begin{prop}
In type $G_2$, 
the double moment graph $\QGP$ is closed
if and only if
one of the following is satisfied
\begin{itemize}
\item Either $\Theta_P$ or $\Theta_Q$ coincides with $\Theta$,
\item Either $\Theta_P$ or $\Theta_Q$ is empty.
\end{itemize}
\end{prop}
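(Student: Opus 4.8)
The plan is the following. The ``if'' direction needs no argument: by Examples~\ref{ex:pqnothing} and~\ref{lem:peverything} the graph $\QGP$ is closed as soon as one of $\Theta_P,\Theta_Q$ is $\emptyset$ or $\Theta$. So the real content is the converse, which I would prove in contrapositive form: if $\Theta_P$ and $\Theta_Q$ are both nonempty and both proper, then $\QGP$ is not closed. Since $G_2$ has only two simple roots, in that situation $\Theta_P=\{\gamma_P\}$ and $\Theta_Q=\{\gamma_Q\}$ with $\gamma_P,\gamma_Q\in\{\al_1,\al_2\}$, so $W_Q=\{1,s_{\gamma_Q}\}$ has order $2$. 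By Corollary~\ref{lem:clos} it then suffices to exhibit, for each of these (at most) four pairs, a single vertex $\mu\in(W\theta)_Q$ and two positive roots $\al\ne\be$ with $(\mu,\al)>0$, $(\mu,\be)>0$, such that $s_\al\mu$ and $s_\be\mu$ lie in one and the same $W_Q$-orbit, this orbit being different from $W_Q\mu$, while no $w\in W_Q$ fixing $\mu$ sends $\al$ to $\be$: this violates condition~\ref{cond:transonedges} of that corollary.

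First I would fix coordinates in the Euclidean plane: take $\al_1$ (short) in the direction of angle $0$ and $\al_2$ (long) in the direction of angle $5\pi/6$, so that the six positive roots lie in the directions $0,\tfrac\pi6,\tfrac\pi3,\tfrac\pi2,\tfrac{2\pi}3,\tfrac{5\pi}6$ (the short ones at $0,\tfrac\pi3,\tfrac{2\pi}3$, the long ones at $\tfrac\pi6,\tfrac\pi2,\tfrac{5\pi}6$) and the dominant chamber is the sector between the directions $\tfrac\pi3$ and $\tfrac\pi2$. A dominant $\theta$ whose stabilizer is $W_{\{\al_1\}}$ (resp. $W_{\{\al_2\}}$) is a positive multiple of the fundamental weight $\om$ on the $\al_1$-wall (resp. on the $\al_2$-wall), hence points in the direction $\tfrac\pi2$ (resp. $\tfrac\pi3$); accordingly $W\theta$ is the set of the six long-root directions (resp. of the six short-root directions), a regular hexagon, on which $W_Q$ acts through the single reflection $s_{\gamma_Q}$, with either no fixed vertex or exactly two antipodal ones.

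Then I would settle the four cases $(\gamma_P,\gamma_Q)$ by a small computation inside the relevant hexagon. For instance, take $\gamma_P=\gamma_Q=\al_1$, so that $W\theta$ is the set of long-root directions and $(W\theta)_Q$ consists of the three of them lying in the closed half-plane $(\cdot,\al_1)\ge0$; for $\mu$ the vertex in the direction $\tfrac\pi6$ --- which is not fixed by $s_{\al_1}$ --- the root $\al$ in the direction $\tfrac\pi6$ and the root $\be$ in the direction $\tfrac\pi2$ give $(\mu,\al)>0$, $(\mu,\be)>0$, and $s_\al\mu$, $s_\be\mu$ are precisely the two vertices in the directions $\tfrac{7\pi}6$ and $\tfrac{11\pi}6$; these form one $s_{\al_1}$-orbit, disjoint from $W_Q\mu=\{\mu,s_{\al_1}\mu\}$, and the only $w\in W_Q$ with $w\mu=\mu$ is $1$, with $1(\al)=\al\ne\be$. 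Hence condition~\ref{cond:transonedges} of Corollary~\ref{lem:clos} fails, and $\QGP$ is not closed. The remaining three cases go through in exactly the same fashion; in the two ``mixed'' cases $\gamma_P\ne\gamma_Q$ one can in addition choose $\al$ and $\be$ of different lengths, so that already no element of $W$ --- a fortiori none of $W_Q$ --- conjugates $\al$ to $\be$. Alternatively, just as in the passage from type $B$ to type $C$ above, the long--short duality of $G_2$ (which here interchanges $\al_1$ and $\al_2$) identifies the case $(\al_1,\al_1)$ with $(\al_2,\al_2)$ and the case $(\al_1,\al_2)$ with $(\al_2,\al_1)$, so only two cases actually need checking by hand.

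The only genuine difficulty I anticipate is the bookkeeping in that last step: for each candidate vertex $\mu$ of the hexagon one must keep track of precisely which outgoing edges land inside $W_Q\mu$ and which ones accumulate in a single other $W_Q$-orbit, i.e. control how the Bruhat-type order on $W\theta$ interacts with the order-$2$ action of $W_Q$. The structural reason such accumulations always occur when both parabolics are nontrivial and proper is twofold: a vertex not fixed by $W_Q$ has trivial $W_Q$-stabilizer, so any coincidence $s_\al\mu\in W_Q s_\be\mu$ with $\al\ne\be$ immediately breaks condition~\ref{cond:transonedges}; and, by Remark~\ref{rem:g2orthogonal}, the $G_2$ system is rich enough --- it meets more than one length class among the roots relevant to $\mu$ --- for the order-$2$ group $W_Q$ to be unable to account for all such coincidences.
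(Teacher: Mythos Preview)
Your argument is correct and follows the same overall strategy as the paper: the ``if'' direction is handled by Examples~\ref{ex:pqnothing} and~\ref{lem:peverything}, and the ``only if'' direction is proved in contrapositive form by exhibiting, via Corollary~\ref{lem:clos}, a vertex $\mu\in(W\theta)_Q$ with two outgoing edges landing in the same $W_Q$-orbit but not related by the $W_Q$-stabilizer of $\mu$. Your explicit example in the case $\gamma_P=\gamma_Q=\al_1$ checks out (in fact your pair $\{\al,\be\}$ coincides, up to renaming, with the pair the paper constructs in that case), and the duality reduction from four cases to two is legitimate by the same reasoning used for types $B$ and $C$. One small slip: $(W\theta)_Q$ in your worked example has four elements, not three --- the two vertices on the $\al_1$-wall (directions $\tfrac\pi2$ and $\tfrac{3\pi}2$) are both fixed by $s_{\al_1}$ --- but this is irrelevant to the argument.

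The paper's execution differs in that it avoids case analysis altogether. Rather than fixing coordinates and checking each $(\gamma_P,\gamma_Q)$, it uses Remark~\ref{rem:g2orthogonal} to choose $\al$ orthogonal to the generator $\de$ of $W_Q$ and sets $\be=\mu$; since $s_\al s_\de$ is then the rotation by $\pi$, one gets $s_\be\mu=-\mu=s_\de s_\al\mu\in W_Q s_\al\mu$ automatically, and since $W_Q$ fixes $\al$ there is no hope of conjugating $\al$ to $\be$ inside $W_Q$. This single construction works uniformly for all four choices of $(\gamma_P,\gamma_Q)$. Your approach trades this structural trick for concrete hexagon computations; both are short, but the paper's version makes transparent why a proper nontrivial $W_Q$ of order $2$ can never be rich enough.
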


\begin{proof}
If $\Theta_P$ or $\Theta_Q$ coincides with $\Theta$ or is empty, then it follows from 
Example~\ref{ex:pqnothing} and Example~\ref{lem:peverything}.

Suppose that neither $\Theta_P$ nor $\Theta_Q$ coincides with $\Theta$, and that neither $\Theta_P$ nor $\Theta_Q$ is empty.
This means that each of the systems $\Theta_P$ and $\Theta_Q$ contains exactly one simple root.
Denote these simple roots by $\gamma$ and $\de$, respectively, i.e. $\Theta_P=\{\gamma\}$
and $\Theta_Q=\{\de\}$.

By Remark \ref{rem:g2orthogonal}, there exists a root orthogonal to $\gamma$, denote it temporarily by $\gamma'$.
Then $-\gamma'$ is also a root orthogonal to $\gamma$. Also denote the simple root different from $\gamma$ by $\gamma''$.
Then the angles between $\gamma''$ and $\pm \gamma'$ are $\pi/3$ and $2\pi/3$ (in some order), and 
without loss of generality we may assume that the angle between $\gamma'$ and $\gamma''$ is $\pi/3$.
Then $\gamma'$ is a dominant weight.
The only simple root orthogonal to $\gamma'$ is $\gamma$, so we can set $\theta=\gamma'$, and 
the stabilizer of $\theta$ will be exactly $W_P$. The orbit $W\theta$ consists of all roots of the 
same length as $\theta$.

By Remark \ref{rem:g2orthogonal} again, there exists a root orthogonal to $\de$, denote it by $\al$.
By a similar argument, without loss of generality, $\al$ is also a dominant weight, 
and then it is also a positive root.

The orbit $W\theta$ contains at most two roots proportional to $\de$ and at most two roots orthogonal to $\de$.
(Moreover, in fact, it cannot contain both roots proportional to $\de$ and roots orthogonal to $\de$, 
because these roots have different lengths.) 
In total, there are 6 roots in $W\theta$, so we can take $\mu \in W\theta$ that is not orthogonal to $\de$ and 
is not a multiple of $\de$. Again, without loss of generality, $\mu$ is a positive root. Denote also $\be=\mu$.

We are going to use Corollary~\ref{lem:clos}.
First, note that $s_{\al}$ and $s_{\de}$ are two commuting reflections since $\al$ and $\de$ are orthogonal.
Moreover, their composition is the rotation by $\pi$, i.e. the change of sign. In particular, 
$s_{\de}s_{\al}\mu=-\mu$. But we have set $\be=\mu$, so $s_{\be}\mu=-\mu=s_{\de}s_{\al}\mu$,
and $s_{\be}\mu \in W_Q s_{\al}\mu$.

Second, the orbit $W_Q \mu$ consists of two roots: $\mu$ and $s_{\de}\mu$. These two roots are different since 
we chose $\mu$ so that it is not orthogonal to $\de$. 
In other words, $s_{\de}\mu-\mu$ is a nonzero multiple of $\de$.
Similarly, $\mu$ is not a multiple of $\de$, so $\mu$ is not orthogonal to $\al$. 
Therefore, $\mu$ and $s_{\al}\mu$ are two different roots, and $s_{\al}\mu-\mu$
is a nonzero multiple of $\al$.

Therefore, we cannot have $s_{\al}\mu=s_{\de}\mu$. 
Indeed, otherwise $s_{\al}\mu-\mu$ would be equal to $s_{\de}\mu-\mu$, 
and $\al$ and $\de$ are orthogonal, a contradiction.

So, $s_{\al}\mu \ne \mu$, $s_{\al}\mu \ne s_{\de}\mu$, and $s_{\al}\mu \notin W_Q \mu$.
Therefore, $W_Q s_{\al} \mu$ and $W_Q\mu$ are two different $W_Q$-orbits, and $s_{\al}\mu, s_{\be}\mu \notin W_Q \mu$.
Finally, $\al$ and $\de$ are orthogonal, so for every $w\in W_Q$ we have $w(\al)=\al$, 
and it is impossible to find $w \in W_Q$ such that $w(\al)=\be$.
Clearly, $(\mu, \be) > 0$ since $\mu = \be$. Also, $(\mu, \al) > 0$ since $\mu$ is not 
orthogonal to $\al$, $\mu$ is a positive root and $\al$ is a dominant weight.
By Corollary~\ref{lem:clos}, 
the closure of a double moment graph $\QGP$
cannot coincide with $\GP$.
\end{proof}


\section{Global sections of structure sheaves}\label{sec:globsec}

In the present section we describe $W$-invariants of the $T$-equivariant cohomology of the product $\hh_T(G/Q\times G/P)$ as modules of global sections of structure sheaves on moment graphs. 

\paragraph{\it Oriented cohomology sheaves.}
Let $G$ be a split semisimple linear algebraic group over $k$, let $T$ be its split maximal torus. Let $\hh_G(-)$ (resp. $\hh_T(-)$) be a $G$-equivariant (resp. $T$-equivariant) algebraic oriented cohomology theory. We assume that $\hh_T(pt)$ is naturally a $\hh_G(pt)$-module via the forgetful map.
Let $\Sigma=\Sigma^+\amalg \Sigma^- \supset \Theta$ be the associated root datum and let $\Gg=((V,\le),l\colon E\to \Sigma^+)$ be a moment graph. Observe that any character $\la \colon T \to \mathbb{G}_m$ corresponds to a $T$-equivariant line bundle $\Ll_\la$ over $pt$ and, hence, it gives the characteristic class $c_1^{\hh}(\Ll_\la) \in \hh_T(pt)$.

We extend the notion of a moment graph sheaf of \cite{BMP} as follows

\begin{dfn}
 An $\hh$-sheaf $\Ff$ on the moment graph $\Gg$ is given by the data
\[
(\{\Ff^{x}\},\{\Ff^{x\to y}\}, \{\rho_{x,x\to y},\rho_{y,x\to y}\}),
\] 
where for all $x\in V$ and for all $x\to y \in E$
\begin{itemize}
\item[(SH1)] $\Ff^{x}$ is an $\hh_G(pt)$-module,
\item[(SH2)]  $\Ff^{x\to y}$ is an $\hh_T(pt)$-module such that $c_1^{\hh}(\Ll_{l(x\to y)}) \cdot \Ff^{x\to y}=(0)$,
\item[(SH3)] the maps $\rho_{x,x\to y}\colon \Ff^x\to \Ff^{x\to y}$ and $\rho_{y,x\to y}\colon \Ff^y\to \Ff^{x\to y}$ are homomorphisms of $\hh_G(pt)$-modules. 
\end{itemize}
\end{dfn}

\begin{ex}\label{ex:CHmom}
Let $\hh(-)=\CH(-;\Q)$ be the Chow theory with rational coefficients. Then $\hh_T(pt)$ is the polynomial ring in simple roots over $\Q$ denoted simply by $S$ (cf. Example~\ref{ex:chr}). The Weyl group $W$ of $G$ acts naturally on $S$. Its subring of invariants $S^W$ can be identified with $\hh_G(pt)$. The following data define a $\CH(-;\Q)$-sheaf on the parabolic moment graph $\GP$ of Example~\ref{ex:parabolic}:
\begin{itemize}
\item[(SH1)] $\Ff^{x}=S$, for all $x\in W^P$, 
\item[(SH2)] $\Ff^{x\to y}=S/\al S$ for all $x \to y=\overline{s_\al x}\in E$ and $\al\in \Sigma^+$,
\item[(SH3)] $\rho_{x,x\to y}=\rho_{y,x\to y}$ is simply the quotient map $S\to S/\al S$. 
\end{itemize}
This sheaf is called \emph{sheaf of rings} in \cite{BMP}, and  \emph{structure sheaf} in \cite{F1} and subsequent papers.
\end{ex}

\begin{ex}
Let $\hh(-)=K^0(-)[\be^{\pm 1}]$, $\be=1$ be the $K$-theory. Then $\hh_T(pt)$ is the group ring $Z=\Z[T^*]$  of the characters of $T$ (cf. Example~\ref{ex:kth}). 
Its subring of invariants $Z^W$
can be identified with $\hh_G(pt)$. Each character $\la$ defines the first characteristic class $c_1^{\hh}(\Ll_\la)=1-e^{-\la}$. The following data define a $K^0$-sheaf on $\GP$:
\begin{itemize}
\item[(SH1)] $\Ff^{x}=Z$, for all $x\in W^P$, 
\item[(SH2)] $\Ff^{x\to y}=Z/(1-e^{-\al}) Z$ for all $x\to y=\overline{s_\al x}\in E$ and $\al\in \Sigma^+$,
\item[(SH3)] $\rho_{x,x\to y}=\rho_{y,x\to y}$ is simply the quotient map $Z\to Z/(1-e^{-\al}) Z$. 
\end{itemize}
\end{ex}

\begin{ex}\label{ex:parsheaf} 
More generally, for an arbitrary oriented cohomology theory $\hh$ the following data define an $\hh$-sheaf $\Ff$ on  $\GP$:

for all $u\in W^P$ and for all $u\to v \in E$ we set
\begin{itemize}
\item[(SH1)]  $\Ff^{u}=S$, where $S=\hh_T(pt)^\wedge$,
\item[(SH2)] $\Ff^{u\to v}=S/x_{l(u\to v)}S$,
\item[(SH3)] $\rho_{u,u\to v}=\rho_{v,u\to v}$
to be the quotient map $S \to S/x_{l(u\to v)}S$;
\end{itemize}
Similar to Example~\ref{ex:CHmom} we call such $\Ff$ the (parabolic) structure $\hh$-sheaf on $\GP$ and denote it by $\OP$.
\end{ex}

\begin{ex}\label{prop:motsheaf} In the notation of Section~\ref{sec:doublemom}
consider a double moment graph \[\QGP=(({}^QW^P,\le),l\colon {}^QE^P\to \Sigma^+)\] for $G$, $P$ and $Q$. 
The following data define an $\hh$-sheaf  $\Ff$ on $\QGP$:

for all $u\in {}^QW^P$ and for all $u\to u' \in {}^QE^P$ we set
\begin{enumerate}
\item[(SH1)] $\Ff^{u}=S^{W_u}$, where $S=\hh_T(pt)^\wedge$ and $W_u$ corresponds to $\Theta_u$,
\item[(SH2)] $\Ff^{u\to u'}=S/c_1^{\hh}(\Ll_{l(u\to u')})S$,
\item[(SH3)] As for the maps $\rho_{u,u\to u'}$ and $\rho_{u',u\to u'}$ we set
\[\rho_{u,u\to u'}=\textrm{can}\circ \iota\; \text{ and }\;\rho_{u',u\to u'}:=\textrm{can}\circ \iota_{w},\; 
\text{ where}\] 
$\textrm{can}\colon S\to S/c_1^{\hh}(\Ll_{l(u\to u')})S$ is the canonical quotient map, \\
$\iota\colon S^{W_{u}}\hookrightarrow S$ is the canonical inclusion, and \\
$\iota_w\colon S^{W_{u'}}\hookrightarrow S$ is a twisted inclusion $s\mapsto w(s)$ corresponding to the element $w\in W_Q^{u'}$ uniquely determined by the double parabolic decomposition $s_{l(u\to u')}u =wu'v$, $v\in W_P$.

\end{enumerate}
We call such $\Ff$ a (double) structure $\hh$-sheaf and denote it $\QOP$.
\end{ex}

The following is a natural generalization to our setting of the module of local sections of a moment graph sheaf from \cite[\S 1.3]{BMP}.

\begin{dfn}\label{dfn:globsect}
Let $I$ be a subset of $V$, the module of local sections of an $\hh$-sheaf $\Ff$ on a moment graph $\Gg$ is defined as
\[
\Gamma(I,\Ff):=\left\{ (m_x)\in \prod_{x\in I} \Ff^{x} \mid
\begin{array}{c} 
\rho_{x,x\to y}(m_x)=\rho_{y,x\to y}(m_y)\\ \forall x\to y\in E ,\, x,y\in I                                                                                                                 
                                                                                                   \end{array}   \right\}.
\]
We write $\Gamma(\Ff)$ for the module of global sections $\Gamma(V,\Ff)$ of the $\hh$-sheaf $\Ff$.
\end{dfn}

\begin{ex} From the main result of \cite{KK86} it follows that 
\[
\Gamma(\OP)\simeq \CH_T(G/P;\Q),
\]
where $\OP$ is the parabolic structure  $\CH(-;\Q)$-sheaf.
From \cite{KK90} it follows that
\[
\Gamma(\OP)\simeq K^0_T(G/P),
\]
where $\OP$ is the parabolic structure  $K_0$-sheaf.
For an arbitrary $\hh$ we have \cite[Thm.~11.9]{CZZ}
\[\Gamma(\OP)\simeq \hh_T(G/P).\]
\end{ex}

Our main result is the following

\begin{thm}\label{thm:main}  Let $G$ be a split semisimple linear algebraic group over a field of characteristic $0$. Let $P$, $Q$ be standard parabolic subgroups containing a split maximal torus $T$. Let $W$ be the Weyl group.
Let $\hh_G(-)$ be an oriented equivariant cohomology theory in the sense of \cite{HML}. Let $S$
be the $T$-equivariant coefficient ring $\hh_T(pt)$ which is $\Sigma$-regular in the sense of \cite{CZZ}.
Let $\QGP$ be the double moment graph with the (double) structure $\hh$-sheaf $\QOP$.

Then the ring of $W$-invariants $\hh_{T}(G/Q\times G/P)^W$ is isomorphic to
\begin{itemize}
\item[(A)]
$
{}^{W_Q}R_{W_P}=\left\{ (b_v)\in \bigoplus_{v\in W^P} S^{W_v} \middle| 
\begin{aligned}
{\rm (i)} &\;   b_v-b_{\overline{s_\be v}}\in x_\be S\text{ for all }s_\be\in W, \text{ and} \\
{\rm (ii)} &\;  b_{\overline{s_\al v}}=s_\al(b_v)\text{ for all }s_\al\in W_Q
\end{aligned}
\right\}
$
and it is also isomorphic to
\item[(B)]
$
{}^Q\!A^P:=
\left\{(c_u)\in \bigoplus_{u\in {^Q W^P}} S^{W_{u}} \middle|
\begin{array}{c}
c_{u}- w(c_{u'})\in x_{\al} S\\
\hbox{for all }\al\in\Sigma^+, w\in W_Q \\
\hbox{ s. t. } u\in s_{\al}wu' W_P
\end{array}\right\}$,
\end{itemize}
where the product structure is given by the coordinate-wise multiplication.

Moreover, ${}^Q\!A^P\subseteq \Gamma(\QOP)$, where the equality holds if $\QGP$ is closed.
 \end{thm}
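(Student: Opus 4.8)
The plan is to prove the three isomorphisms in the order (A), then (B), then the comparison with $\Gamma(\QOP)$, reducing each assertion to results already available in the excerpt. For part (A): by Proposition~\ref{prop:winvis} we already know $\hh_T(G/Q\times G/P)^W\simeq \hh_T(G/P)^{W_Q}$ (invariants under the left $\odot$-action of $W_Q$). So it suffices to produce an explicit description of $\hh_T(G/P)=\DPd$ together with its left $W_Q$-action in terms of the $T$-fixed-point localization. Using the restriction to the fixed locus $\DPd\hookrightarrow S_{W/W_P}^\star=\Hom(W^P,S)$, an element is a tuple $(b_v)_{v\in W^P}$; the GKM-type condition coming from edges of $\GP$ (each $v$ and $\overline{s_\be v}$ joined by an edge labelled $\be$) gives exactly condition (i) $b_v-b_{\overline{s_\be v}}\in x_\be S$, while the refinement $b_v\in S^{W_v}$ comes from tracking the stabilizers as in Example~\ref{prop:motsheaf}(SH1); this is essentially \cite[Thm.~11.9]{CZZ} restated. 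Then the left $\odot$-action of $w\in W_Q$ on the fixed-point tuple is by $b_v\mapsto w(b_{\overline{w^{-1}v}})$, so the $W_Q$-invariance is precisely condition (ii) $b_{\overline{s_\al v}}=s_\al(b_v)$ for all $s_\al\in W_Q$ (and hence for all of $W_Q$). I would spell out the compatibility of (i) and (ii) with the ring structure: coordinate-wise multiplication on $\Hom(W^P,S)$ restricts to both conditions, matching the product on $\DPd$.

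For part (B): I want to identify ${}^Q\!A^P$ with ${}^{W_Q}R_{W_P}$ via the double parabolic decomposition. Every $v\in W^P$ is uniquely $v=wu$ with $u\in {}^QW^P$, $w\in W_Q^u$, so a $W_Q$-invariant tuple $(b_v)$ is determined by its values $c_u:=b_u$ on the minimal double coset representatives, via $b_{wu}=w(c_u)$; condition (ii) then says exactly that this extension is well-defined (independent of the chosen $w$, using that $W_u=W_Q\cap uW_Pu^{-1}$ stabilizes $c_u$, which forces $c_u\in S^{W_u}$). Substituting $b_v=w(c_u)$ into condition (i) and rewriting the edge relation $v\in s_\al \overline{\phantom{x}}$ in terms of double cosets turns (i) into the stated congruence $c_u-w(c_{u'})\in x_\al S$ whenever $u\in s_\al w u' W_P$. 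So (A)$\,\Leftrightarrow\,$(B) is a bookkeeping translation; the map ${}^{W_Q}R_{W_P}\to {}^Q\!A^P$, $(b_v)\mapsto (b_u)_{u\in {}^QW^P}$, is an inverse-pair with $(c_u)\mapsto (w(c_u))_{wu}$, and both are ring maps for coordinate-wise multiplication.

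For the final comparison: by Definition~\ref{dfn:globsect} an element of $\Gamma(\QOP)$ is a tuple $(m_u)\in\prod_{u}S^{W_u}$ satisfying, for each edge $u\to u'\in {}^QE^P$, the single relation $\rho_{u,u\to u'}(m_u)=\rho_{u',u\to u'}(m_{u'})$, i.e. $m_u-w(m_{u'})\in x_{l(u\to u')}S$ for the distinguished label $l(u\to u')$ and its associated $w\in W_Q^{u'}$ from (SH3). This is visibly one instance ($\al=l(u\to u')$) of the family of congruences defining ${}^Q\!A^P$, so ${}^Q\!A^P\subseteq\Gamma(\QOP)$ always. Conversely, given $(m_u)\in\Gamma(\QOP)$ I must produce, for every $\al\in\Sigma^+$ and $w\in W_Q$ with $u\in s_\al w u'W_P$, the congruence $m_u-w(m_{u'})\in x_\al S$. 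By Lemma~\ref{lem:geomorb} the corresponding edge of $\GP$ lies in the $W_Q$-orbit of some edge of $\GP$ attached to a double-coset representative; if $\QGP$ is \emph{closed} then Corollary~\ref{lem:clos}(3)$\Rightarrow$(2) gives $w_0\in W_Q$ with $u=\overline{w_0 u}$ and $s_\al$ conjugate (by $w_0$) to the distinguished label at the edge $u\to\widehat{s_{l}u}=u'$, so the required congruence is obtained from the defining relation of $\Gamma(\QOP)$ by applying $w_0$ and using $x_{w_0(l)}=\pm x_l \bmod$ the relevant ideal together with $w_0(m_u)=m_u$. I expect the main obstacle to be exactly this last step: organizing the $W_Q$-orbit combinatorics of the edges of $\GP$ (keeping track of signs of roots, of the twisted inclusions $\iota_w$, and of which coset representative each $x_\al$-congruence is attached to) so that closedness delivers \emph{all} congruences in the definition of ${}^Q\!A^P$ and not merely those at the chosen labels; the ring-isomorphism claims and parts (A),(B) are comparatively formal once Proposition~\ref{prop:winvis} and the double parabolic decomposition are in hand.
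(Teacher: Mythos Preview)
Your proposal is correct and follows essentially the same route as the paper: Proposition~\ref{prop:winvis} plus the fixed-point description of $\hh_T(G/P)$ from \cite[Thm.~11.9]{CZZ} for (A), the double parabolic decomposition to pass between (A) and (B), and Lemma~\ref{lem:geomorb}/Corollary~\ref{lem:clos} for the comparison with $\Gamma(\QOP)$.

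Two small points. First, for the equality under closedness the paper does not verify the ${}^Q\!A^P$ congruences directly as you propose; instead it extends $(c_u)\in\Gamma(\QOP)$ to $(b_v)_{v\in W^P}$ via $b_{wu}=w(c_u)$ and checks condition (i) of (A). Since (ii) holds by construction, closedness reduces every edge relation $b_v-b_{\overline{s_\be v}}\in x_\be S$ to a $W_Q$-translate of a single chosen-label relation $b_u-b_{\overline{s_\al u}}\in x_\al S$, which is exactly the $\Gamma(\QOP)$ datum. This avoids the orbit bookkeeping you flag as the main obstacle. Second, your phrase ``$x_{w_0(l)}=\pm x_l\bmod$ the relevant ideal'' is garbled: what Corollary~\ref{lem:clos} actually gives is $w_0(l)=\pm\al$ (not $\pm l$), so that applying $w_0$ to the chosen congruence yields a congruence modulo $x_{w_0(l)}S=x_\al S$ (up to a unit, since $x_{-\al}=-_F x_\al$). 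With that correction your direct argument also goes through.
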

 
\begin{proof}
(A) By Proposition~\ref{prop:winvis} we have $\hh_{T}(G/Q\times G/P)^W\simeq \hh_{T}(G/P)^{W_Q}$.
Using \cite[Theorem~11.9]{CZZ}, we can identify $\hh_T(G/P)$ with the following $S$-subalgebra of $S_{W/W_P}^\star$
\begin{equation*}\tag{*}
R_{W_P}=
\left\{
(b_{v}) \in \bigoplus_{v\in W^P}S \mid
b_{v}- b_{\overline{s_\be v}}\in x_{\be} S,\; \forall s_\be \notin vW_Pv^{-1}
\right\}
\end{equation*}
Observe that
the divisibility does not depend on a choice of representative of the coset $s_\al v$ as there are no multiple edges in the respective (parabolic) moment graph.

The group $W$ acts on $Q_W^*$ via the $\odot$-action given by
\[
w\odot b_v f_v=w(b_v) f_{wv}\quad \text{or, equivalently,}\quad w\odot (b_{v})=(w(b_{w^{-1}v})).
\]
This induces an action of $W$ on $S_{W/W_P}^\star$ via $w\odot b_{v}f_{v}=w(b_{v})f_{\overline{wv}}$
which restricts to an action on $R_{W_P} \subset S_{W/W_P}^\star$.

Now, $(b_v)\in R_{W_P}$ is $W_Q$-invariant if and only if for all $\al\in \Theta_Q$ and for all $v\in W^P$ we have $s_\al \odot (b_v)=(s_\al (b_{\overline{s_\al v}}))=(b_v)$. In other words, $\overline{s_\al v}=v$ and
\begin{equation*}\tag{**}
b_{\overline{s_\al v}}=s_\al(b_v),\quad \forall s_\al\in W_Q,
\end{equation*} 
Observe that $\overline{s_\al v}=v$ if and only if $s_\al \in v W_P v^{-1}$ $\Longleftrightarrow$ $\al \in \Theta_v=\Theta_Q \cap v(\Sigma^+_P)$. The latter implies that
\[
b_v=b_{\overline{s_\al v}}=s_\al(b_v),\; \forall \al \in \Theta_v,\text{ hence, }b_v\in S^{W_v}.
\]
Combining (*) and (**) we prove the isomorphism (A).

(B) Observe that $\al$ is uniquely determined by the class of $w$ modulo $W_{u'}$, i.e., if $u\in s_{\al}wu' W_P$ and $u\in s_\be w' u'W_P$,
where $w$ and $w'$ belong to the same coset in $W_Q/W_{u'}$, then $\al=\be$.

We define a map
\[
\psi\colon {}^Q\!A^P\rightarrow \bigoplus_{v\in W^P} S^{W_{v}},\quad \psi(c_u):=(w(c_u))_{wu},\; w\in W_Q^u.
\]
This map is clearly an injective homomorphism of $S^W$-algebras, so if we show that its image is ${}^{W_Q}R_{W_P}$ we are done.

Let $v=wu \in W^P$ be a parabolic decomposition.
Since $s_\al v=s_\al w u$, $s_\al\in W_Q$ is in the same double coset as $v$, 
$\overline{s_\al v}=w' u$ is the parabolic decomposition of $\overline{s_\al v}$, where $w'$ is the minimal representative of the coset of $s_\al w$ in
$W_Q/W_u$. We then have $b_{\overline{s_\al v}}=w'(c_u)=s_\al(w(c_u))=s_\al(b_v)$.
In other words, the image of $\psi$ is $W_Q$-invariant, i.e. (ii) holds.

Consider parabolic decompositions $v=wu$ and $\overline{s_\be v}=w'u'$, where $u,u'\in {}^Q W^P$,
$w$ is a minimal representative in $W_Q/W_u$ and $w'$ is a minimal representative in $W_Q/W_{u'}$. Then $b_v=w(c_u)$,
$b_{\overline{s_\be v}}=w'(c_{u'})$ and the condition (i) becomes equivalent to
$w(c_u)-w'(c_{u'}) \in x_\be S$ and, hence, to
$c_u - w^{-1}w'(c_{u'}) \in x_{w^{-1}(\be)}S$.
Since $w'u'z=s_\be v=s_\be w u$ for some $z\in W_P$, we obtain $u=w^{-1}s_\be w'u'z=s_{w^{-1}(\be)}w^{-1}w' u'z$.
Substituting $w^{-1}w'$ by $w$ we obtain the condition on $(c_u)$ in the definition of ${}^Q\! A^P$, Hence, (i) holds.

We have shown that $\textrm{Im}(\psi)\subseteq {}^{W_Q}R_{W_P}$. 

Vice versa, given an element $(b_v)_{v\in W^P}$, 
one can obtain an element in ${}^Q\! A^P$ by projecting onto the ${}^QW^P$-components: 
Set $c_u=w^{-1}(b_v)$, where $v=wu$ is the parabolic decomposition.
If 
$u, u'\in {}^QW^P$, $w\in W_Q$, and $\al\in\Sigma^+$ are such that $u\in s_{\al}wu'W_P$, then $c_u-w(c_{u'})=c_u-c_{wu'}\in x_\al S$ (since $c_{u'}\in S^{W_{u'}}$ we may assume $wu'\in {}^Q W^P$) and so $(c_u)_{u\in {}^QW^P}\in {}^Q\! A^P$. 

Finally, by Example~\ref{prop:motsheaf} and Definition~\ref{dfn:globsect} we have
\[\Gamma(\QGP)=
\left\{(c_u)\in \bigoplus_{u\in {^Q W^P}} S^{W_{u}} \middle|
\begin{array}{c}
c_{u}- w(c_{u'})\in x_{\al} S\\
\hbox{for all }u\to u'\in {}^QE^P,\hbox{ where } \al=l(u\to u')\\
\hbox{and } w\in W_Q^u \hbox{ is determined by }s_\al u\in wu'W_P.
\end{array}\right\}.\]
By (B) ${}^Q\!A^P$ is a submodule of $\Gamma(\QGP)$. 
If $\QGP$ is closed, then by Lemma~\ref{lem:geomorb} any edge $v\stackrel{\be}\to \overline{s_\be v}$ of the parabolic moment graph $\GP$
is obtained from some edge $u\to \overline{s_\al u}$, $u\in {}^QW^P$, $\al=l(u\to \widehat{s_\al u})$ by applying an element $w$ of $W_Q$. The divisibility condition $b_v-b_{\overline{s_\be v}}\in x_\be S$
of (A) then can be rewritten as 
\[b_{wu}-b_{\overline{s_{w(\al)} wu}}=w(b_u)-w(b_{\overline{s_\al u}}) \in x_{w(\al)}S.\]
So it is redundant with respect to 
$b_u-b_{\overline{s_\al u}} \in x_{\al}S$ or to $c_u-w(c_{u'})\in x_\al S$, with $u'=\widehat{s_\al u}$, $u\in s_{\al}wu' W_P$. Hence, ${}^Q\!A^P=\Gamma(\QGP)$.
\end{proof}

\begin{ex} Let $G=SL_3$  and let $P_1$ and $P_2$ denote the two parabolic subgroups
\[
P_1=\left(
\begin{array}{ccc}
\ast&\ast&\ast\\
\ast&\ast&\ast\\
0&0&\ast\\
\end{array}
\right),
\quad
P_2=
\left(
\begin{array}{ccc}
\ast&\ast&\ast\\
0&\ast&\ast\\
0&\ast&\ast
\end{array}
\right)
\]
Let $s_1=(12)$, $s_2=(23)$ be the two simple reflections of $W=S_3$ corresponding to the two parabolic subgroups above.
Let $\hh=\CH$ be the Chow theory. We then have $S:=\Z[e_1,e_2]$ (the polynomial ring in two variables with integer coefficients). 

The minimal length representatives for $S_3/\langle s_1 \rangle$ are $\{e,s_2,s_1s_2\}$ and the moment graph description of $\CH_T(SL_3/P_1)$ is
\[
\left\{(z_e,z_{s_2},z_{s_1s_2})\in S\oplus S\oplus S \mid \,
\begin{array}{c}z_e-z_{s_2}\in\al_2 S,\,
z_{s_2}-z_{s_1s_2}\in\al_1 S\\
z_e-z_{s_1s_2}\in(\al_1+\al_2)S
\end{array}
\right\}.\]
Here $s_1s_2=s_{s_2(\al_1+\al_2)}s_2$, where $s_2(\al_1+\al_2)=\al_1$.

The action of $S_3$ on $\CH_T(SL_3/P_1)$ is determined by
\begin{equation}\label{SL3s1Action}
s_1((z_e,z_{s_2},z_{s_1s_2}))=(s_1(z_e),s_1(z_{s_1s_2}),s_1(z_{s_2})),
\end{equation}
\begin{equation}\label{SL3s2Action}
s_2((z_e,z_{s_2},z_{s_1s_2}))=(s_2(z_{s_2}),s_2(z_e),s_2(z_{s_1s_2})).
\end{equation}
\end{ex}

By equation (\ref{SL3s1Action}), an element $(z_e,z_{s_2},z_{s_1s_2})\in \CH_T(SL_3/P_1) $ is $s_1$-invariant if and only if
\[
z_e=s_1(z_e), \, z_{s_2}=s_1(z_{s_1s_2}).
\]
(The third condition $z_{s_1s_2}=s_1(z_{s_2})$ is clearly redundant).  We deduce that
\[
{}^{s_1}\CH_T(SL_3/P_1)=\]
\[
\left\{(z_e,z_{s_2},s_1(z_{s_2}))\in S^{s_1}\oplus S\oplus S\mid z_e-z_{s_2}\in\al_2 S,  \, z_e-s_1(z_{s_2})\in(\al_1+\al_2)S)
\right\}.
\]
We notice that the second congruence is equivalent to the first one, since $z_e\in S^{s_1}$ and $s_1(\al_2)=\al_1+\al_2$. 
Thus we have an isomorphism of $S^{s_1}$-modules:
\[{}^{s_1}\CH_T(SL_3/P_1)\cong
\left\{
(z_e,z_{s_2})\in S^{s_1}\oplus S\mid z_e-z_{s_2}\in\al_2S 
\right\}.
\]
This description is encoded in the double moment graph:
\[
e\stackrel{\al_2}\longrightarrow s_2.
\]

We notice that $\{(1,1)$, $(\al_2(\al_1+\al_2),0)$, $(0, \al_2)\}$ is a basis for ${}^{s_1}\CH_T(SL_3/P_1)$ as a free $S^{s_1}$-module.

\begin{ex} Equation (\ref{SL3s2Action}) tells us that $(z_e,z_{s_2},z_{s_1s_2})\in \CH_T(SL_3/P_1)$ is $s_2$-invariant if and only if
\[
z_{s_2}=s_2(z_e)\quad\hbox{ and}\quad z_{s_1s_2}=s_2(z_{s_1s_2}), 
\]
from which, reasoning as in the previous case, we get
\[
{}^{s_2}\CH_T(SL_3/P_1)\cong
\left\{
(z_e,z_{s_1s_2})\in S\oplus S^{s_2}\mid z_e-z_{s_2s_1}\in (\al_1+\al_2)S
\right\}.
\]
Such a space is an $S^{s_2}$-module with a basis $\{(1,1),(\al_1+\al_2,0), (0,\al_1(\al_1+\al_2))\}$.
The corresponding double moment graph is then
\[
e\stackrel{\al_1+\al_2}\longrightarrow s_1s_2.
\]
\end{ex}

\section{Correspondence product and equivariant motives}\label{sec:motives}

In the present section we interpret the correspondence
product in the category of equivariant $\hh$-motives in moment graph terms (see Proposition~\ref{prop:corr}).
In particular, we show that endomorphisms of the equivariant motive of the total flag variety of type $A$
can be identified with global sections of the structure {\tt h}-sheaf on the total
double moment graph (see Corollary~\ref{cor:typeA}).

\paragraph{\it Correspondence product.}
The $\hh_G(pt)$-module $\hh_G(G/B\times G/B)$ can be endowed with another product structure the so called correspondence product. Namely, given $f,g\in \hh_G(G/B\times G/B)$ we set
\[
f\circ g:=pr_{1,3*}(pr_{1,2}^*(f)\cdot pr_{2,3}^*(g)),\] 
where $pr_{i,j}\colon (G/B)^3 \to (G/B)^2$ are projections to the respective components and $pr^*$, $pr_*$ denote the induced pull-backs and push-forwards in the theory $\hh$.
Since $G/B\times G/B$ is a $G$-equivariant cellular space over $G/B$ (via the orbit stratification), and $\hh_G(G/B)=\hh_T(pt)=\hh_T(G/B)^W$, we have 
\[\hh_G(G/B\times G/B)=\hh_{G|T}(G/B\times G/B)=\hh_T(G/B\times G/B)^W
\] 
as $\hh_G(pt)$-modules. Following the notation of Theorem~\ref{thm:main}.(B) we denote the latter by ${}^B\! A^B$.
By \cite[Theorem~6.2]{NPSZ}
the ring ${}^B\! A^B$ with respect to the correspondence product `$\circ$'
is a (non-commutative) ring isomorphic to $\DF$. Moreover, by the K\"unneth isomorphism \eqref{eq:kunn} it is also isomorphic to
the endomorphism ring
$\End_{\DF}(\DFd)$ of $\DF$-modules (w.r. to the $\odot$-action).

More generally, given parabolic subgroups $P$, $Q$, $H$ of $G$ 
there is the $\hh_G(pt)$-bilinear map
\[
\circ \colon  \hh_G(G/Q\times G/P) \otimes \hh_G(G/P\times G/H) \to \hh_G(G/Q\times G/H),
\]
defined by the correspondence product 
\[(\phi,\psi)\mapsto \psi\circ \phi=pr_{Q,H*}(pr_{P,H}^*(\psi)\cdot pr_{Q,P}^*(\phi)),\]
where $pr_{Q,P}\colon G/Q\times G/P\times G/H \to G/Q\times G/P$ is the corresponding projection.
As the projections are $W$-equivariant,
it restricts to the $W$-invariants and, hence, it induces
the $S^W$-bilinear pairing
\[
{}^QA^P \otimes {}^PA^H \to {}^QA^H,\; \text{ where }
{}^QA^P=\hh_T(G/Q\times G/P)^W.
\]

\paragraph{\it Equivariant motives}
Let $\mathfrak{M}_G$ denote the full additive subcategory of the category of $G$-equivariant $\hh$-motives generated by the motives of varieties $G/P$ for all parabolic $P$ (we refer to \cite[\S2]{GV} for definitions and basic properties of this category). Recall only that morphisms in this category are cohomology groups $\hh_G(X\times Y)$, where $G$ acts diagonally on the product of smooth projective $G$-varieties $X$ and $Y$ and the composition is given by the correspondence product.
Following \cite{CNZ, NPSZ} let $\mathfrak{M}_{G|T}$ denote
the full additive subcategory of the respective category of relative equivariant $\hh$-motives, where the morphisms are given by images $\hh_{G|T}(X\times Y)$ of forgetful maps. Similarly, let $\mathfrak{M}_T^W$ denote the category
of $W$-invariant equivariant motives with morphisms $\hh_T(X\times Y)^W$ (observe that the correspondence product on $\mathfrak{M}_T$ is $W$-equivariant).

By the K\"unneth isomorphism \eqref{eq:kunn} the category $\mathfrak{M}_T^W$ is equivalent to the full additive subcategory of $\DF$-modules (equivalently, of $S_W$-modules) generated by $\DPd$ for all parabolics $P$.
There are induced functors
\[
\mathfrak{M}_G \stackrel{\Ff_1}\to \mathfrak{M}_{G|T} \stackrel{\Ff_2}\to \mathfrak{M}_T^W
\]
defined on 
morphisms by \[\hh_G(G/Q\times G/P)\to \hh_{G|T}(G/Q\times G/P) \hookrightarrow \hh_T(G/Q\times G/P)^W={}^Q\! A^P\]
so the functor $\Ff_2$
is faithful by definition.

\begin{rem}
All these functors become equivalences if $R$ contains $\Q$ or if the torsion index of $G$ is $1$ or if $G$ is simply-connected and $\hh=K_0$.
\end{rem}

\begin{rem}
For some theories $\hh$ (e.g. for $\CH$ and $K_0$) induced maps on endomorphisms rings have nilpotent kernels (this is essentially the Rost nilpotence theorem for $\hh$ discussed in \cite{CM, GV}).
So there is a lifting of idempotents with respect to $\Ff_1$.
In other words, if $\End_{\DF}(\DPd)={}^P\! A^P$ has no idempotents, then the $G$-equivariant $\hh$-motive of $G/P$ is indecomposable.
\end{rem}

\begin{rem}
For the Chow theory $\CH(-;\Z/p\Z)$, $p$ is a prime, and some groups $G$ (e.g. see \cite{Vi} for $PGL_p$), the functor $\Ff_2$ is an equivalence.
The latter is closely related to surjectivity of the map $\CH(BG) \to \CH(BT)^W$. 
Indeed, if these maps are surjective for Levi-parts of all parabolic subgroups of $G$, including $G$ itself, then $\Ff_2$ is an equivalence.
\end{rem}

\begin{dfn}
Consider the total flag variety $X=\amalg_P G/P$ (the disjoint union is taken over all parabolic subgroups). 
Consider the direct sum $\Os=\oplus_{P,Q} \QOP$ of double structure $\hh$-sheaves supported 
on the respective double moment graphs (see Example~\ref{prop:motsheaf}). We call $\Os$ the total double structure $\hh$-sheaf of the group $G$. 
\end{dfn}

Suppose $G$ is of type $A$. Then any double moment graph $\QGP$ for $G$ is closed by Proposition~\ref{prop:typeAc}.
As an immediate consequence of Theorem~\ref{thm:main}.(B) the module of global sections can be identified with
\[
\Gamma(\Os)=\oplus_{P,Q} \Gamma(\QOP)=\bigoplus_{P,Q} \hh_T(G/Q\times G/P)^W.
\]
Moreover, the correspondence product turns $\Gamma(\Os)$ into a (non-commutative) ring which is
isomorphic to the endomorphism ring of the $W$-invariant equivariant motive of the total flag~$X$, i.e., we obtain
\begin{cor}\label{cor:typeA} If $G$ is of type $A$, then $(\Gamma(\Os),\circ) \simeq \End_{\mathfrak{M}_T^W}([X])$.
\end{cor}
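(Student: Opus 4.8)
The plan is to assemble \Cref{cor:typeA} from three ingredients already in place: the identification of morphism spaces in $\mathfrak{M}_T^W$ with the rings ${}^Q\!A^P$, the closedness of all double moment graphs in type $A$, and the description of ${}^Q\!A^P$ as global sections of $\QOP$. First I would recall that by the K\"unneth isomorphism \eqref{eq:kunn} and the definition of $\mathfrak{M}_T^W$, the $\Hom$-space between the $W$-invariant motives $[G/Q]$ and $[G/P]$ is precisely $\hh_T(G/Q\times G/P)^W = {}^Q\!A^P$ (in the notation of \Cref{thm:main}.(B)), and that composition of morphisms in $\mathfrak{M}_T^W$ is exactly the correspondence product $\circ$ restricted to $W$-invariants. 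Hence the endomorphism ring $\End_{\mathfrak{M}_T^W}([X])$ of the total flag variety $X=\amalg_P G/P$ is, as an $S^W$-module, the direct sum $\bigoplus_{P,Q}{}^Q\!A^P$ with multiplication given by the various bilinear pairings ${}^Q\!A^P\otimes {}^P\!A^H\to {}^Q\!A^H$ assembled block-wise.

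Next I would invoke \Cref{prop:typeAc}: in type $A$ every double moment graph $\QGP$ is closed. Therefore the last assertion of \Cref{thm:main} applies with equality, giving a ring isomorphism ${}^Q\!A^P \cong \Gamma(\QOP)$ of $S^W$-algebras for every pair $(P,Q)$ of standard parabolics. Taking the direct sum over all $(P,Q)$ and using the definition $\Os=\oplus_{P,Q}\QOP$ together with $\Gamma(\Os)=\oplus_{P,Q}\Gamma(\QOP)$, we obtain an $S^W$-module isomorphism $\Gamma(\Os)\cong \bigoplus_{P,Q}{}^Q\!A^P = \End_{\mathfrak{M}_T^W}([X])$.

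The remaining point — and the one I expect to be the main obstacle — is compatibility of products: one must check that under the isomorphism $\bigoplus_{P,Q}\Gamma(\QOP)\cong\bigoplus_{P,Q}{}^Q\!A^P$ the (block-wise) correspondence product on the right corresponds to the natural product on the left. The product on $\Gamma(\Os)$ that makes the statement meaningful is the one transported from $\End_{\mathfrak{M}_T^W}([X])$ via the coordinate-wise identifications, so strictly speaking this is a definitional matter; but to make it genuinely a statement about moment-graph sheaves one wants the multiplication on global sections to be describable intrinsically (coordinate-wise multiplication of local sections within a fixed graph, and composition of "matrix-like" correspondences across graphs). I would therefore spell out that each ${}^Q\!A^P\cong\Gamma(\QOP)$ is already a ring isomorphism for the coordinate-wise product (this is the content of \Cref{thm:main}), that the correspondence product restricted to $W$-invariants is the pairing $\circ\colon {}^Q\!A^P\otimes {}^P\!A^H\to {}^Q\!A^H$ described in \Cref{sec:motives}, and hence the ring structure on $\End_{\mathfrak{M}_T^W}([X])=\bigoplus_{P,Q}{}^Q\!A^P$ transports verbatim to $\Gamma(\Os)$. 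Since all the pieces are additive and the isomorphism is the direct sum of the block isomorphisms, no further computation beyond bookkeeping is needed; the substantive input is entirely \Cref{thm:main}.(B) and \Cref{prop:typeAc}.
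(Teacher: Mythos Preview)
Your proposal is correct and follows essentially the same approach as the paper: the paper's argument (given in the paragraph immediately preceding the corollary) consists precisely of invoking Proposition~\ref{prop:typeAc} for closedness in type $A$, applying Theorem~\ref{thm:main}(B) to identify each $\Gamma(\QOP)$ with ${}^Q\!A^P=\hh_T(G/Q\times G/P)^W$, and then summing over all $(P,Q)$. Your concern about product compatibility is well-placed but, as you correctly note, is resolved definitionally in the paper---the ring structure on $\Gamma(\Os)$ is \emph{defined} to be the one transported from the correspondence product, so no further verification is needed.
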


\begin{ex} Suppose $\hh=\CH(-;\Q)$. Then $\hh_T(G/Q\times G/P)^W\simeq S^{W_Q}\otimes_{S^W} S^{W_P}$ is a free
$S^W$-module of rank $r(Q,P)=|{}^QW^P|$. Therefore, 
\[(\hh_T(G/Q\times G/P)^W,\circ)\simeq M_{r(Q,P)}(S^W) \text{ and}\]
$\Gamma(\Os)$ is isomorphic to the ring of $r\times r$ matrices over $S^W$, where $r=\sum_{(Q,P)} r(Q,P)$.
\end{ex}

\paragraph{\it Product of global sections.}
Given $\phi\in {}^Q\!A^P$ and $\psi\in {}^P\!A^H$ so that $\phi$ and $\psi$ are global sections of the respective double structure sheaves $\QOP$ and ${}^P\Os^H$, we associate
a new element $\psi\circ \phi\in {}^Q\! A ^H$ given by the correspondence product 
which is a global section of the respective double structure sheaf ${}^Q \Os^H$.
We describe this product explicitly (in terms of the standard localization basis) as follows:

First, we extend the arguments of \cite[\S8]{CNZ} (were the case $P=Q$ was considered)
to describe the $Q$-module $\Hom_{Q_W} (Q_{W/W_Q}^*,Q_{W/W_P}^*)$.
Let $\{f_v \}$ denote the standard basis of the free $Q$-module $Q_{W/W_Q}^*$ or $Q_{W/W_P}^*$ (we use the same letter $f$ for these bases 
indexed by the minimal coset representatives). 
Since the $Q_W$-module (via the $\odot$-action) $Q_{W/W_Q}^*$ is generated by $f_e$ (we have $w\odot f_e=f_{\overline{w}}$ for any $w\in W$),
any homomorphism $\phi\in \Hom_{Q_W}(Q_{W/W_Q}^*,Q_{W/W_P}^*)$ is uniquely determined by its value on $f_e$ that is 
\[
\phi(f_e)=(b_v)=\sum_{v\in W^P}b_vf_v, \quad b_v\in Q.\]

Similarly, $\psi\in \Hom_{Q_W}(Q_{W/W_P}^*,Q_{W/W_H}^*)$ is also determined by its value at $f_e$.
Let 
\[
\psi(f_e)=(c_u)=\sum_{u\in W^H}c_uf_u, \quad c_u\in Q,
\]
 hence,
we have ($\phi$ and $\psi$ are $Q_W$-equivariant)
\[
\psi\circ \phi (f_e)=\psi(\sum_{v\in W^P}b_vf_v)=\sum_{v\in W^P}b_v\psi(f_v)=\sum_{v\in W^P}b_v\psi(v\odot f_e)=
\]
\[
\sum_{v\in W^P}b_v(v\odot \psi(f_e))=\sum_{v\in W^P} b_v \sum_{u\in W^H} v(c_u)f_{\overline{vu}}.
\]
Summarizing, we obtain the following

\begin{prop} \label{prop:corr}
The correspondence product (in terms of the standard localization basis) $(a_w)=(c_u)\circ (b_v)$ is given by
\[
\sum_{\{v\in W^P,\; u\in W^H\;\mid\; \overline{vu}=w\}} b_v v(c_u)=a_w,\;\text{ for all }\;w\in W^H.
\]
\end{prop}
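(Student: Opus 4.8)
The plan is to transport the correspondence product to the algebraic side via the Künneth isomorphism~\eqref{eq:kunn} and then compute by evaluating on the cyclic generator $f_e$.

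First I would record that~\eqref{eq:kunn}, together with its $W$-invariant refinement from Section~\ref{sec:orcohprod}, identifies ${}^Q\!A^P=\hh_T(G/Q\times G/P)^W$ with $\Hom_{\DF}(\DQd,\DPd)$, and similarly ${}^P\!A^H$ with $\Hom_{\DF}(\DPd,\mathbb{D}_{F,H}^\star)$ and ${}^Q\!A^H$ with $\Hom_{\DF}(\DQd,\mathbb{D}_{F,H}^\star)$, all $\Hom$'s taken with respect to the $\odot$-action. Under these identifications the correspondence product $\circ$ becomes composition of $\odot$-equivariant homomorphisms; this is the extension of~\cite[Theorem~6.2]{NPSZ} (which treats $P=Q=H=B$) to arbitrary standard parabolics, and it follows from the projection formula and base change applied to the three projections $(G/Q)\times(G/P)\times(G/H)\to (G/B)^2$ exactly as in the case of $G/B$. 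I would then pass to the localizations and use the inclusion $\Hom_{\DF}(\DQd,\DPd)\hookrightarrow \Hom_{Q_W}(Q^*_{W/W_Q},Q^*_{W/W_P})$ together with the fact that $Q^*_{W/W_Q}$ is a cyclic $Q_W$-module generated by $f_e$ (since $w\odot f_e=f_{\overline w}$ for all $w\in W$); hence $\phi$ is determined by $\phi(f_e)=\sum_{v\in W^P}b_v f_v$ and $\psi$ by $\psi(f_e)=\sum_{u\in W^H}c_u f_u$.

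The key step is the evaluation of the composite on $f_e$. Writing $f_v=v\odot f_e$ and using that $\phi$ and $\psi$ commute with the $\odot$-action of $Q_W$ (and are $Q$-linear), I obtain
\[
(\psi\circ\phi)(f_e)=\psi\Big(\sum_{v\in W^P}b_v\,(v\odot f_e)\Big)=\sum_{v\in W^P}b_v\,\big(v\odot\psi(f_e)\big)=\sum_{v\in W^P}\sum_{u\in W^H}b_v\,v(c_u)\,f_{\overline{vu}},
\]
where the last equality is the defining formula $v\odot(c_u f_u)=v(c_u)f_{\overline{vu}}$. Reading off the coefficient of $f_w$ yields $a_w=\sum_{\overline{vu}=w}b_v\,v(c_u)$, which is the asserted identity.

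Finally I would check well-definedness of the right-hand side: the coefficient $b_v$ depends only on the coset $vW_P$ by construction, and for fixed $w$ the sum $\sum_{\overline{vu}=w}b_v\,v(c_u)$ is independent of the chosen representatives $u\in W^H$ and $v\in W^P$ because $c_u\in S^{W_u}$ and the remaining ambiguity in $v$ is absorbed by the double parabolic decomposition, precisely as in the proof of Theorem~\ref{thm:main}(B); the $\odot$-equivariance, hence $W$-invariance, of $\psi\circ\phi$ is inherited from $\phi$ and $\psi$, so $(a_w)$ indeed lies in ${}^Q\!A^H$. The main obstacle is the first step — establishing that the Künneth isomorphism intertwines the correspondence product with composition of $\odot$-equivariant maps in the parabolic generality needed here; once that is granted, what remains is the short cyclic-generator computation above.
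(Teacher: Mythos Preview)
Your proposal is correct and follows essentially the same route as the paper: the paper's argument is precisely the cyclic-generator computation you give---evaluate $\psi\circ\phi$ on $f_e$ using $Q_W$-equivariance and the formula $v\odot(c_u f_u)=v(c_u)f_{\overline{vu}}$, then read off the coefficient of $f_w$. The paper treats the identification of the correspondence product with composition under the K\"unneth isomorphism as already established (via \cite[Theorem~6.2]{NPSZ} in the Borel case, with the parabolic extension left implicit), so your flagging of this as the main point requiring justification is accurate; your final well-definedness paragraph is extra and not needed for the statement as formulated, since the computation takes place in the localized modules $Q_{W/W_\bullet}^*$ where the $b_v$, $c_u$ are simply elements of $Q$.
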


Observe that tensoring with $Q$ induces
an embedding
\[
\hh_T(G/Q\times G/P)^W=\Hom_{\DF} (\mathbb{D}^{\star}_{F,Q},\DPd)\hookrightarrow \Hom_{Q_W} (Q_{W/W_Q}^*,Q_{W/W_P}^*).
\]
We now investigate its image.
Recall that (cf. the proof of Lemma~\ref{lem:Winvc}) $\mathbb{D}^{\star}_{F,Q}$ is a $\DF$-module generated by the class of a point 
$[pt_Q]=x_Qf_{e} \in S_{W/W_Q}^\star$, 
Therefore, any $\phi=(b_v)\in \Hom_{\DF} (\mathbb{D}^{\star}_{F,Q},\DPd)$ is determined by its value on $[pt_Q]$. On the other hand, $\phi([pt_Q])$ belongs to  $\DPd$ as an element of 
$S_{W/W_P}^\star \subset Q_{W/W_P}^*$ if it satisfies the criteria (i) of Theorem~\ref{thm:main}(A). Moreover, since
 $\phi$ is $Q_W$-linear, it has to satisfy 
\[
w\odot \phi(f_e) = \phi(w\odot f_e)=\phi(f_{\overline{w}})=\phi(f_e)\]
 for all $w\in W_Q$, which translates as
\[
w\odot \sum_{v\in W^P}b_vf_v =
\sum_{v\in W^P} w(b_v)f_{\overline{wv}}=\sum_{v\in W^P}b_{v} f_v \quad \text{for all }w\in W_Q.
\]
Observe that the latter gives the condition (ii) of Theorem~\ref{thm:main}(A). 
In other words, (ii)~guarantees that $(b_v)$ defines a $Q_W$-equivariant homomorphism $Q_{W/W_Q}^*\to Q_{W/W_P}^*$.

Combining these together we obtain

\begin{lem} The element (global section of the localized double structure sheaf)
$\phi\in \Hom_{Q_W} (Q_{W/W_Q}^*,Q_{W/W_P}^*)$ comes from $\Hom_{\DF} (\mathbb{D}^{\star}_{F,Q},\DPd)$ if and only if the coefficients $b_v \in Q$ satisfy
conditions
\[
b_v'=x_Q b_v \in S\text{ and }x_{v(\al)}\mid (b_v' - b_{\overline{s_{v(\al)}v}}')\text{ for all }\al \notin \Sigma_P.
\]
\end{lem}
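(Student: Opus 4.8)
The plan is to deduce the statement from the combinatorial description $\DPd\cong R_{W_P}$ of \cite[Theorem~11.9]{CZZ} (recorded as $(*)$ in the proof of Theorem~\ref{thm:main}), together with the fact recalled just above that $\DQd$ is generated as a $\DF$-module, with respect to the $\odot$-action, by the point class $[pt_Q]=x_Q f_e$.

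First I would check that a $Q_W$-equivariant map $\phi\colon Q^*_{W/W_Q}\to Q^*_{W/W_P}$ lies in the image of the embedding $\Hom_{\DF}(\DQd,\DPd)\hookrightarrow\Hom_{Q_W}(Q^*_{W/W_Q},Q^*_{W/W_P})$ if and only if $\phi([pt_Q])\in\DPd$. The ``only if'' direction is immediate, as $[pt_Q]\in\DQd$. For ``if'': every element of $\DQd$ has the form $d\odot[pt_Q]$ with $d\in\DF\subset Q_W$, so $\phi(d\odot[pt_Q])=d\odot\phi([pt_Q])$ lies in $\DPd$, because $\DPd$ is stable under the $\odot$-action of $\DF$ on $Q^*_{W/W_P}$ (\cite[Lemma~3.11]{LZZ}); thus $\phi(\DQd)\subseteq\DPd$, the restriction $\psi:=\phi|_{\DQd}$ belongs to $\Hom_{\DF}(\DQd,\DPd)$, and $\phi$ is its image under the embedding (since $[pt_Q]\in\DQd$ and $x_Q$ is invertible in $Q$, $\DQd$ generates $Q^*_{W/W_Q}$ over $Q_W$, so $\phi$ is the unique $Q_W$-linear extension of $\psi$).

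Next I would compute $\phi([pt_Q])$ in terms of the coefficients $b_v$ of $\phi(f_e)=\sum_{v\in W^P}b_v f_v$. Since $x_Q\in S$ and $x_Q f_e=(x_Q\de_e)\odot f_e$, $Q_W$-linearity gives $\phi([pt_Q])=x_Q\phi(f_e)=\sum_{v\in W^P}b_v' f_v$ with $b_v'=x_Q b_v$. By $(*)$, an element $\sum_v b_v' f_v$ of $Q^*_{W/W_P}$ lies in $\DPd$ exactly when $b_v'\in S$ for all $v\in W^P$ and $b_v'-b_{\overline{s_\be v}}'\in x_\be S$ for all $v$ and all reflections $s_\be\notin vW_Pv^{-1}$. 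Substituting $\be=v(\al)$, the condition $s_\be\notin vW_Pv^{-1}$ turns into $\al=v^{-1}(\be)\notin\Sigma_P$, and $x_\be S=x_{v(\al)}S$, so this second family becomes exactly $x_{v(\al)}\mid (b_v'-b_{\overline{s_{v(\al)}v}}')$ for all $\al\notin\Sigma_P$. Together with the first step this gives the claimed equivalence.

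The lemma is essentially a restatement, so no serious obstacle is expected; the one point requiring a little care is the change of variables in the last step, namely verifying that as $\be$ ranges over the roots with $s_\be\notin vW_Pv^{-1}$ and $\al$ ranges over $\Sigma\setminus\Sigma_P$, the assignment $\be=v(\al)$ produces the same reflections $s_\be$ and the same principal ideals $x_\be S=x_{v(\al)}S$, so that the divisibility conditions in $(*)$ and in the lemma genuinely match. This is merely the spot where a sign error or a coset-representative mismatch could slip in. (One may also remark that the additional constraint $b_v'\in S^{W_v}$ occurring in Theorem~\ref{thm:main}(A) is automatic here: it follows from the two displayed conditions together with the $W_Q$-invariance of $(b_v)$, which is itself forced by the hypothesis that $\phi$ is $Q_W$-equivariant.)
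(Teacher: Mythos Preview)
Your proposal is correct and follows essentially the same approach as the paper: the lemma is stated immediately after (and as a summary of) the discussion that $\DQd$ is $\DF$-generated by $[pt_Q]=x_Qf_e$, so $\phi$ comes from $\Hom_{\DF}(\DQd,\DPd)$ precisely when $\phi([pt_Q])=\sum_v x_Qb_v f_v$ lies in $\DPd\cong R_{W_P}$, which is exactly the criterion $(*)$ from the proof of Theorem~\ref{thm:main}. Your write-up is in fact a bit more careful than the paper's, spelling out the ``if'' direction and making the substitution $\be=v(\al)$ explicit; the paper leaves both of these implicit.
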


\section{Appendix (type $B$)}

We fix subsets $\Theta_P$ and $\Theta_Q$ of the set of simple roots $\Theta$ of the root system $\Sigma$ of type $B$.
We denote the coordinates of vectors in the 
vector space $\R^n$ containing $\Theta$ by $x_1, \ldots, x_n$.
As for the types $A$ and $D$, we denote
$\Theta\setminus (\Theta_Q\cup\{\al_n\})=\{\al_{k_1},\ldots, \al_{k_r}\}$.
If $\al_n \in \Theta_Q$, then denote $\tau=(x_{k_r+1},\ldots, x_n)$, 
otherwise denote the empty list of coordinates by $\tau$.
For simplicity of notation, denote $k_0 = 0$ and $k_{r+1} = n$.
We refer to the tuple $(x_{k_m+1},\ldots,x_{k_{m+1}})$, $0\le m \le r$, as to the $m$-th tuple.
Again we fix a dominant weight $\theta$ with stabilizer $W_P$.

The Weyl group $W_Q$ acts on $\R^n$ by arbitrary permutations of coordinates in each of the tuples 
\[(x_1,\ldots,x_{k_1}),\; (x_{k_1+1},\ldots,x_{k_2}),\; \ldots,\; (x_{k_{r-1}+1},\ldots,x_{k_r}),\; (x_{k_r+1},\ldots, x_n),\] 
and by sign changes of any coordinate in the tuple $\tau$.

For an arbitrary $\mu \in W\theta$, $\mu=(x_1, \ldots, x_n)$, 
and $\al \in \Sigma^+$
such that $s_\al \mu \notin W_Q \mu$,
we will distinguish between the following cases:
\begin{itemize}
\item[(1)] $s_\al=(ij)$, $i<j$. Then $x_{i}$, $x_{j}$ belong to different tuples, $x_{i}\neq x_{j}$, and we have two sub-cases:
\begin{itemize}
\item[(a)] $x_{j}$ does not belong to $\tau$.
\item[(b)] $x_{j}$ belongs to $\tau$.
\end{itemize}

\item[(2)] $s_\al=\widetilde{(ij)}$, $i<j$. Then $x_{i}\neq -x_{j}$ and we have three sub-cases:
\begin{itemize}
\item[(a)] $x_{i}$ and $x_{j}$ belong to different tuples and $x_{j}$ does not belong to $\tau$.
\item[(b)] $x_{i}$ and $x_{j}$ belong to different tuples and $x_{j}$ belongs to $\tau$.
\item[(c)] $x_{i}$ and $x_{j}$ belong to the same tuple, which is not $\tau$.
\end{itemize}

\item[(3)] $s_\al=\widetilde{(i')}$, $x_{i'}\neq 0$, and $x_{i'}$ does not belong to $\tau$.
\end{itemize}

\begin{dfn}
For an arbitary $\mu \in W\theta$, $\mu = (x_1, \ldots, x_n)$, $a \in \R$ and $m \in \Z$ 
($0\le m\le r$), we say that
the number $a$ 
has \emph{multiplicity $\ell$ in the $m$-th tuple of $\mu$} if the number $a$ occurs exactly $\ell$ times 
among $(x_{k_m+1},x_{k_m+2},\ldots,x_{k_{m+1}})$.

We say that
a non-negative number $a \in \R$ has 
\emph{unsigned multiplicity $\ell$ in the tuple $\tau$ of $\mu$}
if the numbers $a$ and $-a$ together 
occur exactly $\ell$ times 
among the coordinates of $\mu$ that belong to $\tau$.
\end{dfn}

\begin{lem}
Suppose $\mu, \mu' \in W\theta$. Then $\mu' \in W_Q \mu$ if and only if the following two conditions hold:
\begin{itemize}
\item For each $m$ ($0\le m \le r$) such that the $m$-th tuple is not $\tau$ and for each $a \in \R$, 
the multiplicities of $a$ in the $m$-th tuples of $\mu$ and of $\mu'$ coincide.
\item For each non-negative $a \in \R$, 
the unsigned multiplicities of $a$ in the tuple $\tau$ of $\mu$ and in the tuple $\tau$ of $\mu'$ coincide.
\end{itemize}
\end{lem}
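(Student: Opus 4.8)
The plan is to recognize this as the standard description of the orbits of the parabolic subgroup $W_Q$ acting on $\R^n$ in type $B$. As recalled just before the statement, $W_Q$ acts as the internal direct product of a symmetric group permuting the coordinates inside each tuple that is not $\tau$, together with (when $\al_n\in\Theta_Q$) the full hyperoctahedral group acting on $\tau$ by \emph{arbitrary} permutations and \emph{arbitrary} sign changes. Since these factors act on pairwise disjoint sets of coordinates, I would argue tuple by tuple and then reassemble the pieces into a single element of $W_Q$. The hypothesis $\mu,\mu'\in W\theta$ is not needed and may be dropped.

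For the forward (``only if'') implication I would take $w\in W_Q$ with $\mu'=w\mu$ and look at one tuple at a time. On a tuple that is not $\tau$, the element $w$ acts by a permutation of those coordinate positions, so the multiset of coordinates appearing in that tuple is unchanged; hence the multiplicity of every $a\in\R$ in the $m$-th tuple of $\mu$ equals that in the $m$-th tuple of $\mu'$. On the tuple $\tau$ (if $\al_n\in\Theta_Q$), the element $w$ acts by a permutation followed by sign changes; this preserves both the number of coordinates lying in $\tau$ and the multiset of their absolute values, so for every $a\ge 0$ the total number of coordinates of $\tau$ equal to $a$ or to $-a$ — that is, the unsigned multiplicity of $a$ — agrees for $\mu$ and $\mu'$.

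For the converse I would assume the two multiplicity conditions and build $w$ block by block. On a tuple $\ne\tau$, equality of all multiplicities says exactly that the corresponding sub-tuples of $\mu$ and $\mu'$ coincide as multisets, so there is a permutation $\sigma_m$ of that block's indices carrying the $\mu$-coordinates to the $\mu'$-coordinates. On $\tau$, equality of unsigned multiplicities lets me first choose a permutation of the indices of $\tau$ after which corresponding coordinates of $\mu$ and $\mu'$ have equal absolute value position by position, and then apply the subset of sign changes that corrects the signs; here I invoke the fact that in type $B$ — unlike type $D$ — \emph{every} sign change on $\tau$ lies in $W_Q$, so no parity obstruction arises. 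The product of the $\sigma_m$ over all tuples $\ne\tau$ together with this permutation-with-sign-changes on $\tau$ is an element $w\in W_Q$ with $w\mu=\mu'$.

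The argument is essentially routine; the only points requiring care are the bookkeeping of the tuple decomposition and the observation, used in the converse, that all sign changes on $\tau$ are available in type $B$. I do not anticipate a real obstacle beyond keeping the indexing straight.
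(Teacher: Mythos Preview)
Your proposal is correct and follows essentially the same approach as the paper: both directions rest on the standard orbit description for a product of symmetric groups (on the tuples $\ne\tau$) and a hyperoctahedral group (on $\tau$) acting on disjoint coordinate blocks. The only cosmetic difference is that for the converse the paper normalizes both $\mu$ and $\mu'$ to a canonical form (coordinates sorted increasing in each tuple, made nonnegative in $\tau$) and observes these must coincide, whereas you construct $w\in W_Q$ directly block by block.
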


\begin{proof}
The multiplicities in the tuples other than $\tau$ as well as  the unsigned multiplicities in $\tau$ don't change when we apply elements of $W_Q$. 
On the other hand, using the action of $W_Q$ we can make the sequence of coordinates in each tuple 
increasing, moreover, all numbers in $\tau$ can be made positive. If all multiplicities in the tuples other than $\tau$ 
of $\mu$ and $\tau$ of $\mu'$, as well as all unsigned multiplicities in $\tau$, coincide, then this procedure will bring them to 
the same element of $W\theta$.
\end{proof}

\begin{dfn}\label{dfn:invdesc}
Let $\mu \in W\theta$. 
We say that the \emph{invariant description} of $\mu$ is
the function $d_{\mu} \colon \{0, \ldots, r\} \times \R \to \Z_{\ge 0}$ defined by
\[
d_{\mu} (m, a) = 
\begin{cases}
\text{\begin{tabular}{l} the multiplicity of $a$ in the $m$-th tuple of $\mu$, \\ $\qquad$ if the $m$th tuple is not $\tau$;\end{tabular}}\\
\text{\begin{tabular}{l} the unsigned multiplicity of $a$ in the $m$-th tuple of $\mu$, \\ $\qquad$ if the $m$-th tuple is $\tau$ and $a \ge 0$; \end{tabular}}\\
\text{ 0, if the $m$-th tuple is $\tau$ and $a < 0$.}
\end{cases}
\]
\end{dfn}

\begin{cor}\label{cor:typeBinvariantdescription}
Suppose $\mu, \mu' \in W\theta$. Then $\mu' \in W_Q \mu$ if and only if their invariant descriptions coincide.
\end{cor}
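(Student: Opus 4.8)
The plan is to obtain Corollary~\ref{cor:typeBinvariantdescription} as an immediate reformulation of the preceding lemma: the invariant description $d_\mu$ of Definition~\ref{dfn:invdesc} is nothing but a bookkeeping device that records precisely the multiplicity data occurring in the two conditions of that lemma, so the equivalence will follow by unwinding the definition case by case and matching it against the lemma.

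First I would fix $\mu,\mu'\in W\theta$ and note that, since $d_\mu$ and $d_{\mu'}$ are functions on $\{0,\ldots,r\}\times\R$, their invariant descriptions coincide if and only if $d_\mu(m,a)=d_{\mu'}(m,a)$ for every pair $(m,a)$. I would then split this condition according to the two cases in Definition~\ref{dfn:invdesc}. For an index $m$ whose $m$-th tuple is not $\tau$, the equalities $d_\mu(m,a)=d_{\mu'}(m,a)$ as $a$ ranges over $\R$ say exactly that every $a\in\R$ has the same multiplicity in the $m$-th tuple of $\mu$ as in the $m$-th tuple of $\mu'$; running over all such $m$ this is precisely the first condition of the lemma. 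For the index $m$ (at most one, namely $m=r$ when $\al_n\in\Theta_Q$, and none otherwise) whose $m$-th tuple is $\tau$, the equalities with $a\ge 0$ say that every non-negative $a$ has the same unsigned multiplicity in the tuple $\tau$ of $\mu$ as in that of $\mu'$, which is the second condition of the lemma, while for $a<0$ both $d_\mu(m,a)$ and $d_{\mu'}(m,a)$ are $0$ by definition and impose nothing. Hence $d_\mu=d_{\mu'}$ is equivalent to the conjunction of the two conditions of the lemma.

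To conclude I would simply invoke the lemma, which states that these two conditions together hold if and only if $\mu'\in W_Q\mu$. I do not expect a genuine obstacle: the argument is a routine translation, and the only thing requiring a moment's attention is the handling of the degenerate configurations of the tuple decomposition --- for instance $\tau$ being the empty list when $\al_n\notin\Theta_Q$, in which case no $m$-th tuple equals $\tau$ and the $\tau$-part of the invariant description (equivalently, the second condition of the lemma) is vacuous --- but such cases do not affect the reasoning, they merely make one of the two conditions empty.
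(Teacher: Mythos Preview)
Your proposal is correct and matches the paper's treatment: the paper states the corollary with no proof, treating it as an immediate consequence of the preceding lemma and Definition~\ref{dfn:invdesc}, and your argument simply spells out that translation in detail.
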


If the case~(1) or the case~(2) holds for $\mu$ and $\al$, we denote by $i$ and $j$ ($i < j$) the indices such that 
$s_\al=(ij)$ or $s_\al=\widetilde{(ij)}$, respectively. 
Suppose that $x_i$ (resp.~$x_j$) belongs to 
the $p$-th (resp.~$q$-th) tuple.

If the case~(3) holds for $\mu$ and $\al$, 
we denote by $i$ the index such that $s_\al=\widetilde{(i)}$ and suppose that $x_i$ belongs to the $p$-th tuple. 

Finally, we denote $f=d_{s_\al \mu}-d_{\mu}$. 

\begin{lem}\label{lem:typeBinvariantdescription-explicit} 
Depending on the cases (1), (2), (3) for $\mu$ and $\al$ we have
\begin{itemize}
\item[(1)]
\begin{itemize}
\item[(a)] $f(p, x_j) = f(q, x_i) = 1,\; f(p, x_i) = f(q, x_j) = -1$;
\item[(b)] $f(p, x_j) = f(q, |x_i|) = 1,\; f(p, x_i) = f(q, |x_j|) = -1$,  if $x_i \ne -x_j$; 

$f(p, x_j) = 1,\; f(p, x_i) = -1$, if  $x_i = -x_j$;
\end{itemize}
\item[(2)]
\begin{itemize}
\item[(a)] 
$f(p, -x_j) = f(q, -x_i) = 1,\; f(p, x_i) = f(q, x_j) = -1$;
\item[(b)] 
$f(p, -x_j) = f(q, |x_i|) = 1,\; f(p, x_i) = f(q, |x_j|) = -1$, if $x_i \ne x_j$;

$f(p, -x_j) = 1,\; f(p, x_i) = -1$, if $x_i = x_j$;
\item[(c)] 
$f(p, -x_j) = f(p, -x_i) = 1,\; f(p, x_i) = f(p, x_j) = -1$, if  $x_i \ne x_j$, $x_i\ne 0$, $x_j\ne 0$;

$f(p, -x_i) = 2,\; f(p, x_i) = -2$, if $x_i = x_j$; 

$f(p, -x_j) = 1,\; f(p, x_j) = -1$, if $x_i \ne x_j$, $x_i = 0$, $x_j\ne 0$;

$f(p, -x_i) = 1,\; f(p, x_i) = -1$, if $x_i \ne x_j$, $x_i\ne 0$, $x_j=0$; 
\end{itemize}
\item[(3)] $f(p, -x_i) = 1,\; f(p, x_i) = -1$;
\end{itemize}
and $f$ equals zero everywhere else.
\end{lem}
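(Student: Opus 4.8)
The plan is to compute $s_\al\mu$ explicitly in each of the cases (1), (2), (3) and to read off $f=d_{s_\al\mu}-d_\mu$ directly from Definition~\ref{dfn:invdesc}. The guiding observation is that, for a fixed tuple index $m$, the value $d_\nu(m,a)$ depends only on the multiset of coordinates of $\nu$ lying in the $m$-th tuple when that tuple is not $\tau$, and only on the multiset of their absolute values when the $m$-th tuple is $\tau$. Since $s_\al$ alters only the coordinate in position $i$ — and, in cases (1) and (2), also the one in position $j$ — at most the two tuples $\smash{p}$ and $\smash{q}$ have their coordinate multiset changed, and in each of them at most two values are affected. This already yields the final assertion that $f$ vanishes outside the displayed entries, and reduces the lemma to a bounded tuple-by-tuple computation.

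First I would treat case (1): $s_\al=(ij)$ replaces $x_i$ by $x_j$ in the $p$-th tuple and $x_j$ by $x_i$ in the $q$-th tuple. If the $q$-th tuple is not $\tau$ (sub-case (1a)) ordinary multiplicities apply, and since $x_i\ne x_j$ (which holds because $s_\al\mu\ne\mu$) exactly the four entries $f(p,x_j)=f(q,x_i)=1$, $f(p,x_i)=f(q,x_j)=-1$ change. If the $q$-th tuple is $\tau$ (sub-case (1b)), then in $\tau$ one counts unsigned multiplicities, so removing $x_j$ and inserting $x_i$ decreases the unsigned multiplicity of $|x_j|$ by one and increases that of $|x_i|$ by one, unless $x_i=-x_j$, in which case $|x_i|=|x_j|$ and the $\tau$-part of $f$ is zero; this gives the two displayed formulas.

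Cases (2) and (3) are entirely analogous. For case (2), $s_\al=\widetilde{(ij)}$ replaces $x_i$ by $-x_j$ and $x_j$ by $-x_i$, so sub-cases (2a), (2b) run exactly as (1a), (1b) with $x_i,x_j$ replaced by $-x_j,-x_i$ at the appropriate spots. In sub-case (2c), where $i$ and $j$ lie in the same tuple $p\ne\tau$, the $p$-th tuple loses $\{x_i,x_j\}$ and gains $\{-x_j,-x_i\}$, and the four displayed alternatives correspond to the possible coincidences among the numbers $0,\pm x_i,\pm x_j$ (equal indices, a vanishing coordinate, and so on), each collapsing a pair of $\pm1$ contributions. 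For case (3), $s_\al=\widetilde{(i)}$ replaces $x_i$ by $-x_i$ in the $p$-th tuple, which is not $\tau$, with $x_i\ne0$; since $x_i\ne-x_i$ this changes precisely $f(p,-x_i)=1$, $f(p,x_i)=-1$. Throughout, the hypothesis $s_\al\mu\notin W_Q\mu$, via Corollary~\ref{cor:typeBinvariantdescription}, is exactly what places us into one of the enumerated (sub-)cases — for instance it forces $x_i,x_j$ into distinct tuples in case (1) and forces $x_i\notin\tau$, $x_i\ne0$ in case (3).

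The main obstacle is purely organizational: the argument carries no conceptual content, but the number of sub-cases — the four-way split in (2c) and the $x_i=\pm x_j$ dichotomies in (1b) and (2b) — together with the need to switch between ordinary and unsigned multiplicities whenever a tuple coincides with $\tau$, makes a careful and exhaustive enumeration the actual work of the proof.
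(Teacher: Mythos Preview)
Your proposal is correct and follows essentially the same approach as the paper: both proceed by directly applying Definition~\ref{dfn:invdesc} to $s_\al\mu$ in each case, tracking which coordinates change in which tuple and distinguishing ordinary from unsigned multiplicities when the relevant tuple is $\tau$. The paper in fact only writes out case~(1a) in detail and declares the remaining cases similar, so your outline is, if anything, more thorough.
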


\begin{proof}
The statement can be obtained by directly applying Definition \ref{dfn:invdesc} 
to each of the cases (1)--(3). Let us check it in details for the case (1a), all other cases are similar.

Recall that the case (1a) for $\mu$ and $\al$ means that
$s_\al=(ij)$, where $i<j$, the coordinates $x_{i}$, $x_{j}$ belong to different tuples, $x_j$ does not belong to $\tau$, 
and $x_{i}\neq x_{j}$.
We also agreed in the lemma statement that $x_i$ (resp. $x_j$) belongs to the $p$-th (resp. to the $q$-th) tuple.

Denote $s_{\al}\mu = (y_1, \ldots, y_n)$. Then $y_i = x_j$, $y_j = x_i$, and $y_h = x_h$ for all $h \ne i, j$.
In particular, if $m \ne p, q$, then $(y_{k_m + 1}, \ldots, y_{k_{m+1}}) = (x_{k_m + 1}, \ldots, x_{k_{m+1}})$, 
so for each $a \in \R$ we have $d_{s_{\al} \mu}(m, a) = d_{\mu} (m, a)$, and $f(m, a) = 0$. 
Also, if $a \in \R$, $a \ne x_i, x_j$, then the number of coordinates that equal $a$ among $y_{k_p + 1}, \ldots, y_{k_{p+1}}$
is the same as the number of coordinates that equal $a$ among $x_{k_p + 1}, \ldots, x_{k_{p+1}}$, 
because the only different coordinates here are $y_i \ne x_i$, but $a \ne x_i, x_j$. 
So, $d_{s_{\al} \mu}(p, a) = d_{\mu} (p, a)$ and $f(p, a) = 0$. Similarly, $f(q, a) = 0$.

Finally, when we replace $x_i$ with $y_i$ in the $p$th tuple 
(and get $(y_{k_p + 1}, \ldots, y_{k_{p+1}})$ out of $(x_{k_p + 1}, \ldots, x_{k_{p+1}})$ ), 
the number of coordinates that are equal to $x_i$ decreases by 1, and the number of coordinates that are equal to $y_i$ increases by 1.
In other words, $d_{s_{\al} \mu}(p, x_i) = d_{\mu} (p, x_i) - 1$ and $d_{s_{\al} \mu}(p, x_j) = d_{\mu} (p, x_j) + 1$
(recall that $y_i = x_j$), so $f(p, x_i) = -1$ and $f(p, x_j) = 1$.
Similarly, when we replace $x_j$ with $y_j$ in the $q$-th tuple 
to get $(y_{k_q + 1}, \ldots, y_{k_{q+1}})$ out of $(x_{k_q + 1}, \ldots, x_{k_{q+1}})$, 
the number of coordinates that are equal to $x_j$ (resp. $x_i$) decreases (resp. increases) by 1.
In other words, $d_{s_{\al} \mu}(q, x_j) = d_{\mu} (q, x_j) - 1$ and $d_{s_{\al} \mu}(q, x_i) = d_{\mu} (q, x_i) + 1$, 
so $f(q, x_j) = -1$ and $f(q, x_i) = 1$.
\end{proof}

\begin{lem}\label{lem:typeBcase1a2a}
Let $\mu=(x_1,\ldots, x_n) \in W\theta$ and let $\al, \be \in \Sigma^+$ be such that $s_\al \mu$, $s_\be \mu \notin W_Q \mu$.
Suppose also that $s_\be \mu \in W_Q s_\al \mu$. Then
{\begin{enumerate}
\item The case~(1a) holds for $\mu$ and $\al$ if and only if the case~(1a) holds for $\mu$ and $\be$.
In this case, if $s_\al=(i'j')$ and $s_\be=(i''j'')$, where $i' < j'$, $i'' < j''$,
then $i'$ and $i''$ belong to the same tuple, 
$j'$ and $j''$ belong to the same tuple, 
$x_{i'}=x_{i''}$, and $x_{j'}=x_{j''}$.
\item The case~(2a) holds for $\mu$ and $\al$ if and only if the case~(2a) holds for $\mu$ and $\be$.
In this case, if $s_\al=\widetilde{(i'j')}$ and $s_\be=\widetilde{(i''j'')}$, where $i' < j'$, $i'' < j''$,
then $i'$ and $i''$ belong to the same tuple, 
$j'$ and $j''$ belong to the same tuple, 
$x_{i'}=x_{i''}$, and $x_{j'}=x_{j''}$.
\end{enumerate}}
\end{lem}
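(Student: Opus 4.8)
The plan is to route everything through the single function $f := d_{s_\al\mu}-d_\mu$. Since $s_\be\mu\in W_Q s_\al\mu$, Corollary~\ref{cor:typeBinvariantdescription} gives $d_{s_\be\mu}=d_{s_\al\mu}$, hence also $f=d_{s_\be\mu}-d_\mu$; thus Lemma~\ref{lem:typeBinvariantdescription-explicit} describes one and the same $f$ simultaneously through $\al$ and through $\be$. The statement will then follow once I show that the ``shape'' of $f$ already determines whether a root is in case~(1a), in case~(2a), or in neither, and that in the first two cases the shape pins down the tuples and the coordinate values.

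First I would tabulate the signature of $f$ produced by each of the six cases of Lemma~\ref{lem:typeBinvariantdescription-explicit}. The features I want to read off are: in cases~(1a) and~(2a) the support of $f$ consists of four points, split two and two between \emph{two distinct tuples}, \emph{neither of which is $\tau$} (in~(1a) one has $x_j\notin\tau$ by hypothesis and $x_i\notin\tau$ automatically, since $\tau$ is the last tuple and $i<j$ puts $x_i$ in an earlier one; likewise in~(2a)), carrying one value $+1$ and one value $-1$ in each of the two tuples; by contrast, in~(1b) and~(2b) the support either has only two points or has one of its two tuples equal to $\tau$, and in~(2c) and~(3) the whole support lies inside a single tuple. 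Hence the pattern ``four points, two tuples neither equal to $\tau$, one $+1$ and one $-1$ in each'' occurs \emph{only} in cases~(1a) and~(2a).

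To separate~(1a) from~(2a) inside this pattern I would compare the underlying real values. Write $p$ for the tuple containing $x_i$ and $q$ for the tuple containing $x_j$. In case~(1a) the $+1$-point of tuple~$p$ and the $-1$-point of tuple~$q$ both sit at the real $x_j$, and the $-1$-point of tuple~$p$ and the $+1$-point of tuple~$q$ both sit at the real $x_i$; call this the \emph{matching property}. In case~(2a) the $+1$-points sit at $-x_j$ (in tuple~$p$) and $-x_i$ (in tuple~$q$), while the $-1$-points sit at $x_i$ and $x_j$; the matching property would force $-x_j=x_j$ and $-x_i=x_i$, i.e.\ $x_i=x_j=0$, impossible since $x_i\ne -x_j$. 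So a root is in case~(1a) exactly when $f$ has the four-point pattern \emph{with} the matching property, and in case~(2a) exactly when $f$ has it \emph{without} it. As $f$ is common to $\al$ and $\be$, case~(1a) holds for $(\mu,\al)$ if and only if it holds for $(\mu,\be)$, and likewise for~(2a). (A small check: in the degenerate sub-cases of~(2a) with $x_i=x_j\ne 0$ or with $x_j=0\ne x_i$ the matching property still fails, so the dichotomy is unaffected; and $x_i=x_j$ never occurs in~(1a).)

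Finally, suppose both roots lie in case~(1a), say $s_\al=(i'j')$ and $s_\be=(i''j'')$ with $i'<j'$, $i''<j''$; let $p',q'$ (resp.\ $p'',q''$) be the tuples of $i',j'$ (resp.\ $i'',j''$). The two $+1$-points of $f$ are $(p',x_{j'})$, $(q',x_{i'})$ when computed from $\al$ and $(p'',x_{j''})$, $(q'',x_{i''})$ when computed from $\be$, and these two pairs coincide as sets. The ``crossed'' identification $(p',x_{j'})=(q'',x_{i''})$, $(q',x_{i'})=(p'',x_{j''})$ would force $p'=q''$ and $q'=p''$, contradicting $p'<q'$ and $p''<q''$ (the tuples being consecutive blocks of coordinates, $i<j$ in different tuples puts $i$ in an earlier tuple than $j$). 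Hence the ``straight'' identification holds, giving $p'=p''$, $q'=q''$, $x_{i'}=x_{i''}$ and $x_{j'}=x_{j''}$ — exactly the asserted conclusion, with $i',i''$ in the same tuple and $j',j''$ in the same tuple recorded by $p'=p''$ and $q'=q''$. The argument for~(2a) is identical after replacing $x_{j'},x_{i'}$ by $-x_{j'},-x_{i'}$ in the two $+1$-points. I expect the only real obstacle to be the careful bookkeeping of the six signatures in the second paragraph; everything else is formal.
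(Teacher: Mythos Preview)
Your proposal is correct and follows essentially the same strategy as the paper: reduce to the single function $f=d_{s_\al\mu}-d_\mu=d_{s_\be\mu}-d_\mu$, read off from Lemma~\ref{lem:typeBinvariantdescription-explicit} that $\{(1a),(2a)\}$ is characterized by $f$ being supported on two tuples neither of which is $\tau$, then use a further invariant of $f$ to separate (1a) from (2a), and finally identify tuples and coordinate values from the support. The only notable difference is the separating criterion: the paper distinguishes (2a) by the existence of some $a$ with $f(\cdot,a)$ taking value $-1$ but never $+1$, whereas you use your ``matching property'' (that the $+1$-value in the smaller tuple and the $-1$-value in the larger tuple sit at the same real number); both are valid and routine to verify.
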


\begin{proof}
Consider the functions $d_{s_{\al} \mu} - d_\mu$ and $d_{s_{\be} \mu} - d_\mu$.
Since $s_\be \mu \in W_Q s_\al \mu$, they are actually the same function by Corollary \ref{cor:typeBinvariantdescription}.
Denote $f=d_{s_{\al} \mu} - d_\mu=d_{s_{\be} \mu} - d_\mu$.

It follows directly from Lemma \ref{lem:typeBinvariantdescription-explicit} that if the case~(1a) or the case~(2a) holds for $\mu$ and $\al$, 
then there are two numbers $p \in \{0, \ldots, m\}$ such that the $p$-th tuple is not $\tau$
and $f(p, \cdot)$ is a nonzero function (in one variable). 
While if the case~(1b), (2b), (2c), or (3) 
holds for $\mu$ and $\al$, 
then there is only one such number $p \in \{0, \ldots, m\}$ that the $p$-th tuple is not $\tau$
and $f(p, \cdot)$ is a nonzero function (in one variable). 
But $f$ also equals $d_{s_{\be} \mu} - d_\mu$, so by Lemma ~\ref{lem:typeBinvariantdescription-explicit}
applied to $\mu$ and $\be$, we get that the case~(1a) or (2a) holds for $\mu$ and $\be$
if and only if 
there are two numbers $p \in \{0, \ldots, m\}$ such that the $p$-th tuple is not $\tau$
and $f(p, \cdot)$ is a nonzero function (in one variable). 
Therefore, the case~(1a) or (2a) holds for $\mu$ and $\al$ if and only if the case~(1a) or (2a) holds for $\mu$ and $\be$
(although we didn't prove that it is exactly the same case yet).

Until the end of the proof, suppose that 
the case~(1a) or (2a) holds for $\mu$ and $\al$ and the case~(1a) or (2a) holds for $\mu$ and $\be$.
Following the lemma statement, denote by $i'$ and $j'$ ($i' < j'$) the indices such that 
$s_\al=(i'j')$ or $s_\al=\widetilde{(i'j')}$. 
Similarly, denote by $i''$ and $j''$ the indices such that $s_\be=(i''j'')$, $s_\be=\widetilde{(i''j'')}$.

Denote by $p'$ (resp. $p''$, $q'$, $q''$) the indices such 
that $x_{i'}$ (resp. $x_{i''}$, $x_{j'}$, $x_{j''}$) belongs to 
the $p'$th tuple (resp. to the $p''$th tuple, the $q'$th tuple the $q''$th tuple). 
Since $i' < j'$ and $i'' < j''$, we also have $p' < q'$ and $p'' < q''$.

Whatever case holds for $\mu$ and $\al$, we can always 
say that $\ell = p'$ and $\ell = q'$ are exactly the numbers
such that $f(\ell, \cdot)$ is a nonzero function.
And whatever case holds for $\mu$ and $\be$, we can 
say that $\ell = p''$ and $\ell = q''$ are exactly the numbers
such that $f(\ell, \cdot)$ is a nonzero function. 
We also have $p' < q'$ and $p'' < q''$. So, $p'=p''$ and $q'=q''$.

If the case~(1a) holds for $\mu$ and $\al$, then there are two real numbers $a$, $a=x_{i'}$ and $a=x_{j'}$
such that the function $f(\cdot, a)$ takes value 1 once and takes value $-1$ once. For all other $a \in \R$, 
$f(\cdot, a)$ is the zero function. 

If the case~(2a) holds for $\mu$ and $\al$ and $x_{i'} \ne 0$, then $x_{i'}\ne -x_{i'}$, 
and in~(2) we always have $x_{i'}\ne -x_{j'}$, so 
$f(\cdot, x_{i'})$
takes value $-1$ at least once and never takes value $1$.

Finally, if the case~(2a) holds for $\mu$ and $\al$ and $x_{i'} = 0$, then, 
since we always have $x_{i'}\ne -x_{j'}$ in~(2), we can say that $x_{j'}\ne 0$ and 
$x_{j'}\ne -x_{j'}$, so 
$f(\cdot, x_{j'})$
takes value $-1$ at least once and never takes value $1$.
Therefore, the case~(2a) holds for $\mu$ and $\al$ if and only if there exists 
$a \in \R$ such that 
$f(\cdot, a)$
takes value $-1$ at least once and never takes value $1$.
Similarly, the case~(2a) holds for $\mu$ and $\be$ also if and only if there exists 
$a \in \R$ such that 
$f(\cdot, a)$
takes value $-1$ at least once and never takes value $1$.
So, the case~(2a) holds for $\mu$ and $\al$ if and only if the case~(2a) holds for $\mu$ and $\be$.

Now, again regardless  of the case that holds for both $\mu$ and $\al$ and $\mu$ and $\be$, 
by Lemma~\ref{lem:typeBinvariantdescription-explicit} for $\mu$ and $\al$,
$a=x_{i'}$ is the only $a \in \R$ such that $f(p', a)=-1$.
We already know that $p'=p''$, so, 
using Lemma~\ref{lem:typeBinvariantdescription-explicit} for $\mu$ and $\be$,
we can say that $a=x_{i''}$ is the only $a \in \R$ such that $f(p'', a)=-1$.
So, $x_{i'}=x_{i''}$. 
Similarly, using Lemma~\ref{lem:typeBinvariantdescription-explicit},
the function $f(q',\cdot)$ (more specifically, the equation $f(q',a)=-1$ in $a$), 
and the equality $q'=q''$, 
we can say that $x_{j'}=x_{j''}$.
\end{proof}

\begin{lem}\label{lem:typeBbothPQtypeB}
Suppose that 
$\theta=(a,a-1, \ldots, 1, 0, \ldots, 0)$, where $a$ is a non-negative integer and the number of zero coordinates 
in the end is at least $1$,
$k_1=1$, $k_2=2$, \ldots, $k_r=r$, and $\tau=(x_{r+1}, \ldots, x_n)$ ($0 \le r < n$).

Let $\mu= (x_1,\ldots, x_n) \in W\theta$ and $\al, \be \in \Sigma^+$ 
be such that $s_\al \mu, s_\be \mu \notin W_Q \mu$.
Suppose also that $s_\be \mu \in W_Q s_\al \mu$. Then
\begin{enumerate}
\item 
the case~(1b) or the case~(2b) holds for $\mu$ and $\al$ if and only if the case~(1b) or the case~(2b) 
(but not necessarily exactly the same case) holds for $\mu$ and~$\be$. 
\item 
the case~(2c) is impossible for $\mu$ and $\al$, neither it is possible for $\mu$ and $\be$.
\item
the case ~(3) holds for $\mu$ and $\al$ if and only if the case~(3)
holds for $\mu$ and $\be$.
\end{enumerate}
\end{lem}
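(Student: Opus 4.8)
The plan is to exploit the extremely rigid combinatorics forced by the hypotheses. First I would unwind the data: since $\Theta\setminus(\Theta_Q\cup\{\al_n\})=\{\al_1,\dots,\al_r\}$ and $\tau=(x_{r+1},\dots,x_n)$, every tuple other than $\tau$ is a singleton, namely $(x_1),(x_2),\dots,(x_r)$; and since $\theta=(a,a-1,\dots,1,0,\dots,0)$, the multiset of absolute values of the coordinates of any $\mu\in W\theta$ is $\{a,a-1,\dots,1,0,\dots,0\}$, so each \emph{nonzero} real number occurs at most once among the absolute values of the coordinates of $\mu$. Part~(2) is then immediate: case~(2c) requires two coordinates $x_i,x_j$ with $i\ne j$ lying in one common tuple different from $\tau$, and every such tuple is a singleton; hence (2c) can hold neither for $\mu,\al$ nor for $\mu,\be$.

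Next I would rule out the two degenerate sub-cases that occur inside (1b) and (2b) in Lemma~\ref{lem:typeBinvariantdescription-explicit}. In case~(1b) the equality $x_i=-x_j$ together with $x_i\ne x_j$ would give $x_j\ne 0$ and $|x_i|=|x_j|\ne 0$, i.e.\ two distinct coordinates of $\mu$ with the same nonzero absolute value, contradicting the previous paragraph; in case~(2b) the equality $x_i=x_j$ would (since $s_\al\mu\ne\mu$ forces $x_i\ne 0$) again produce two coordinates with equal nonzero absolute value. Thus, in the situation of the lemma, whenever (1b) or (2b) holds it holds in its non-degenerate form; in particular $x_i\ne\pm x_j$, so $|x_i|\ne|x_j|$.

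Now put $f=d_{s_\al\mu}-d_\mu$. By Corollary~\ref{cor:typeBinvariantdescription} the assumption $s_\be\mu\in W_Q s_\al\mu$ forces $f=d_{s_\be\mu}-d_\mu$ as well, so $f$ is literally the same function whether read off from $\al$ or from $\be$. From $f$ I would extract two invariants: the number $N$ of non-$\tau$ tuples on which $f$ is not identically zero, and whether or not $f$ is identically zero on the $\tau$-tuple. Consulting Lemma~\ref{lem:typeBinvariantdescription-explicit} in the only cases that can occur --- besides the impossible (2c) --- namely (1a),(1b),(2a),(2b),(3), all in non-degenerate form, one checks: $N=2$ and $f$ vanishes on $\tau$ exactly in cases (1a),(2a); $N=1$ and $f$ does not vanish on $\tau$ exactly in cases (1b),(2b) (here $|x_i|\ne|x_j|$ is used, to see that the two terms of $f$ on $\tau$, at $|x_i|$ and $|x_j|$, do not cancel); and $N=1$ and $f$ vanishes on $\tau$ exactly in case (3) (there $f$ is supported on the singleton tuple of $x_i$ alone). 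These three profiles are mutually exclusive, and since $f$ does not change when $\al$ is replaced by $\be$, each of the classes $\{(1a),(2a)\}$, $\{(1b),(2b)\}$, $\{(3)\}$ is preserved under $\al\leftrightarrow\be$ --- which is exactly parts (1) and (3). Alternatively one may invoke Lemma~\ref{lem:typeBcase1a2a} to dispose of $\{(1a),(2a)\}$ first, after which only the separation of (3) from $\{(1b),(2b)\}$ inside the triple remains, and this is effected by the single invariant ``$f$ vanishes on $\tau$''.

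I expect the only genuine obstacle to be bookkeeping against the table in Lemma~\ref{lem:typeBinvariantdescription-explicit}: one must check carefully that in cases (1b),(2b) the contributions of $f$ on $\tau$ really do not cancel --- this is precisely the point where the special shape of $\theta$ (each nonzero absolute value appearing exactly once) is used --- and that in every admissible case $f$ is nonzero on the singleton tuple containing $x_i$, so that $N\ge 1$ always and the profile with $N=1$ is never secretly $N=0$. Once these points are nailed down, the remainder is a direct comparison of cases.
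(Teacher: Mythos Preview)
Your proposal is correct and follows essentially the same route as the paper. The paper first invokes Lemma~\ref{lem:typeBcase1a2a} to set aside cases~(1a) and~(2a), then uses the single invariant ``$f(r,\cdot)$ is the zero function'' to separate case~(3) from cases~(1b)/(2b); you instead read off the pair $(N,\text{$f$ vanishes on $\tau$})$ to distinguish all three classes simultaneously, and you explicitly note the paper's route as your alternative. The key non-obvious step---ruling out the degenerate sub-cases of (1b) and (2b) using that each nonzero absolute value occurs at most once in $\mu$---is handled identically in both arguments.
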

\begin{proof}
The case~(2c) is impossible since each tuple except $\tau$ has only one coordinate inside.
By Lemma~\ref{lem:typeBcase1a2a}, without loss of generality we may suppose 
that neither the case~(1a) nor the case~(2a) holds for $\mu$ and $\al$ or for $\mu$ and $\be$.

Again, if the cases~(1b) or~(2b) hold for $\mu$ and $\al$ (resp. $\mu$ and $\be$), denote by $i'$ and $j'$ ($i' < j'$) 
(resp. $i''$ and $j''$, $i'' < j''$)
the indices such that 
$s_\al=(i'j')$ or $s_\al=\widetilde{(i'j')}$ (resp. $s_\al=(i''j'')$ or $s_\al=\widetilde{(i''j'')}$). 
If the case~(3) holds for $\mu$ and $\al$ (resp. for $\mu$ and $\be$), 
denote by $i'$ (resp. by $i''$) 
the index such that $s_\al=\widetilde{(i')}$ (resp. $s_\be=\widetilde{(i'')}$).
Again consider the function $f=d_{s_\al}\mu - d_\mu=d_{s_\be \mu} - d_\mu$
(this is the same function by Corollary \ref{cor:typeBinvariantdescription} since $s_\be \mu \in W_Q s_\al \mu$).

Note that each nonzero value appears only once among all of the coordinates 
of $\theta$, and therefore among the coordinates of $\mu$. 
So, any equality of the form $x_i=x_j$ means that $x_i=0$ and $x_i=-x_j$.
So, if the case~(1b) or the case~(2b) holds for $\mu$ and $\al$, 
we always have both inequalities $x_{i'}\ne x_{j'}$ and $x_{i'} \ne -x_{j'}$, 
regardless of whether we actually have the case~(1b) or the case~(2b).
Now Lemma \ref{lem:typeBinvariantdescription-explicit} says that 
if the case~(1b) or the case~(2b) holds for $\mu$ and $\al$, then $f(r, \cdot)$ is a nonzero function.
While if the case~(3) holds for $\mu$ and $\al$, then $f(r,\cdot)$ is a zero function 
by Lemma~\ref{lem:typeBinvariantdescription-explicit} 
since the $r$-th tuple is $\tau$.

So, we can again say that the case~(3) holds for $\mu$ and $\al$ if and only if $f(r,\cdot)$ is a zero function.
Similarly, the case~(3) holds for $\mu$ and $\be$ if and only if $f(r,\cdot)$ is a zero function.
Therefore, 
the case (3) holds for $\mu$ and $\al$ if and only if the case~(3)
holds for $\mu$ and $\be$.
\end{proof}

\begin{lem}\label{lem:typeBbothPQdontcontaintypeB}
Suppose that 
$\theta$ does not have any zero coordinates and $\tau$ is the empty tuple.
Let $\mu= (x_1,\ldots, x_n) \in W\theta$ and $\al, \be \in \Sigma^+$ 
be such that $s_\al \mu, s_\be \mu \notin W_Q \mu$.
Suppose also that $s_\be \mu \in W_Q s_\al \mu$. Then
\begin{enumerate}
\item 
the cases~(1b) and~(2b) 
are impossible for $\mu$ and $\al$, neither they are possible for $\mu$ and $\be$.
\item 
the case~(2c) holds for $\mu$ and $\al$ if and only if the case~(2c)
holds for $\mu$ and $\be$.
\item
the case~(3) holds for $\mu$ and $\al$ if and only if the case~(3)
holds for $\mu$ and $\be$.
\end{enumerate}
\end{lem}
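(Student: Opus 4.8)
The plan is to run the same machinery as in the proofs of Lemmas~\ref{lem:typeBcase1a2a} and~\ref{lem:typeBbothPQtypeB}, transporting everything to invariant descriptions. The key starting point, available whenever $s_\be\mu\in W_Q s_\al\mu$, is that by Corollary~\ref{cor:typeBinvariantdescription} the functions $d_{s_\al\mu}-d_\mu$ and $d_{s_\be\mu}-d_\mu$ coincide; denote this common function by $f$. Consequently any numerical invariant read off from $f$ is automatically the same whether one computes it via $\al$ or via $\be$, and exploiting this symmetry is the whole point of the argument.

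Claim~(1) is immediate from the hypothesis that $\tau$ is empty: both case~(1b) and case~(2b) require one of the swapped coordinates to lie in $\tau$, so neither can hold for $\mu,\al$ or for $\mu,\be$. For claims~(2) and~(3), I would first invoke Lemma~\ref{lem:typeBcase1a2a} to get that case~(1a) holds for $\mu,\al$ if and only if it holds for $\mu,\be$, and likewise for case~(2a). Combined with claim~(1) and with the fact that the six cases are mutually exclusive and exhaust all $\gamma\in\Sigma^+$ with $s_\gamma\mu\notin W_Q\mu$, this already yields that ``(2c) or~(3)'' holds for $\mu,\al$ if and only if it holds for $\mu,\be$, so only the separation of~(2c) from~(3) remains. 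For that I would compute the total variation $\sum_{m,a}|f(m,a)|$ from Lemma~\ref{lem:typeBinvariantdescription-explicit}: it is $2$ in case~(3) and $4$ in case~(2c), so the common value of this quantity pins down the same case on both sides; if instead ``(1a) or~(2a)'' holds, then~(2c) and~(3) are excluded on both sides and the claims hold vacuously.

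The one step requiring care — and the only place the argument could break — is the evaluation $\sum_{m,a}|f(m,a)|=4$ in case~(2c). Here the hypothesis that $\theta$, hence $\mu$, has no zero coordinate is essential: it eliminates the last two sub-subcases of~(2c) in Lemma~\ref{lem:typeBinvariantdescription-explicit}, which presuppose $x_i=0$ or $x_j=0$ and would give total variation $2$ instead, destroying the dichotomy. In the two surviving sub-subcases one checks that the support of $f$ consists of four distinct pairs $(p,\pm x_i),(p,\pm x_j)$, using $x_i\ne x_j$, $x_i\ne -x_j$ and $x_i,x_j\ne 0$ to preclude any cancellation, or, when $x_i=x_j$, that the values $\pm 2$ occur at the two distinct points $\pm x_i$; in both cases the total variation equals $4$. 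The rest is a routine unwinding of the definitions of the cases and of the invariant description, and requires no further idea.
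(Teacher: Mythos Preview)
Your proof is correct and follows essentially the same route as the paper's: both reduce claim~(1) to the emptiness of $\tau$, both pass to the common difference $f=d_{s_\al\mu}-d_\mu=d_{s_\be\mu}-d_\mu$, and both distinguish case~(2c) from case~(3) by a numerical invariant of $f$ that relies on $\mu$ having no zero coordinates. The only cosmetic difference is that the paper uses the sum of positive values of $f$ (which is $2$ in~(2c) and $1$ in~(3)) rather than your total variation (which is $4$ and $2$, respectively); your explicit invocation of Lemma~\ref{lem:typeBcase1a2a} to first dispose of cases~(1a) and~(2a) is a step the paper leaves implicit under ``similar to the proof of the previous lemma.''
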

\begin{proof}
The proof is similar to the proof of the previous lemma. 
The cases~(1b) and~(2b) are impossible since $\tau$ is the empty tuple.

Again consider the function $f=d_{s_\al}\mu - d_\mu=d_{s_\be \mu} - d_\mu$
(again, this is the same function by Corollary~\ref{cor:typeBinvariantdescription}).

Since $\theta$ does not have any zero coordinates, $\mu$ cannot have any zero coordinates either.
Then it follows from Lemma~\ref{lem:typeBinvariantdescription-explicit} applied to $\mu$ and $\al$ 
that if the case~(2c) holds for $\mu$ and $\al$, then the sum of all positive values of $f$ is 2.
And if the case~(3) holds for $\mu$ and $\al$, then the sum of all positive values of $f$ is 1.

So, the case~(2c) holds for $\mu$ and $\al$ if and only if the sum of all positive values of $f$ is 2.
Similarly, the case~(2c) holds for $\mu$ and $\be$ if and only if the sum of all positive values of $f$ is 2.
Therefore, the case~(2c) holds for $\mu$ and $\al$ if and only if the case~(2c)
holds for $\mu$ and $\be$.
\end{proof}

\begin{lem}\label{lem:typeBcasesBC3}
Suppose that one of the following is true:
\begin{itemize}
\item There exist numbers $p$ and $q$ ($1 \le p,q \le n$) such that $\Theta_P=\{\al_p,\ldots, \al_n\}$, 
$\Theta_Q=\{\al_q,\ldots,\al_n\}$.

\item $\al_n \notin \Theta_P$ and $\al_n \notin \Theta_Q$.
\end{itemize}
Let $\mu \in W\theta$ ($\mu = (x_1,\ldots, x_n)$) and $\al, \be \in \Sigma^+$ 
be such that $s_\al \mu, s_\be \mu \notin W_Q \mu$.
Suppose also that $s_\be \mu \in W_Q s_\al \mu$. Then
\begin{enumerate}
\item the case~(1b) or the case~(2b) holds for $\al$ if and only if the case~(1b) or the case~(2b) 
(but not necessarily exactly the same case) holds for $\be$.
\item the case~(2c) holds for $\al$ if and only if the case~(2c) 
holds for $\be$. Moreover, then 
$\mu$ does not have any zero coordinates.
\item the case~(3) holds for $s_\al$ if and only if the case~(3) 
holds for $s_\be$.
\end{enumerate}
\end{lem}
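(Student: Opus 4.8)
The plan is to obtain the lemma by reducing each of its two cases to one of the two immediately preceding lemmas; the argument is essentially bookkeeping, translating the hypotheses on $\Theta_P,\Theta_Q$ into the hypotheses of Lemma~\ref{lem:typeBbothPQtypeB} and Lemma~\ref{lem:typeBbothPQdontcontaintypeB} and then rereading their conclusions in the present notation.

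First I would treat the case $\Theta_P=\{\al_p,\dots,\al_n\}$, $\Theta_Q=\{\al_q,\dots,\al_n\}$. In type $B$ a dominant weight $(y_1,\dots,y_n)$ is fixed by $s_i$ precisely when $y_i=y_{i+1}$ (for $i<n$) or $y_n=0$ (for $i=n$), so the requirement that $\theta$ have stabilizer $W_P=\langle s_p,\dots,s_n\rangle$ forces $\theta=(y_1,\dots,y_{p-1},0,\dots,0)$ with $y_1>\dots>y_{p-1}>0$; such a weight has the same combinatorial type (distinct nonzero entries, trailing zeros) as $(p-1,p-2,\dots,1,0,\dots,0)$, and the number of trailing zeros is $n-p+1\ge 1$, so we may take $\theta$ to be the latter. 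Moreover $\Theta\setminus\Theta_Q=\{\al_1,\dots,\al_{q-1}\}$ together with $\al_n\in\Theta_Q$ gives $k_i=i$ for $1\le i\le r$, where $r=q-1$, and $\tau=(x_{r+1},\dots,x_n)$ with $0\le r<n$. All hypotheses of Lemma~\ref{lem:typeBbothPQtypeB} are thus satisfied. Its conclusion~(1) is exactly part~(1) here; its conclusion~(2), that case~(2c) never occurs, makes part~(2) hold vacuously (both sides of the equivalence are false, and the additional clause is empty); its conclusion~(3) is part~(3).

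Next I would treat the case $\al_n\notin\Theta_P$, $\al_n\notin\Theta_Q$. Since $s_n=\widetilde{(n)}$ fixes $\theta$ iff its last coordinate vanishes, $\al_n\notin\Theta_P$ forces the dominant weight $\theta$ to have nonzero last coordinate and hence, by dominance, only positive coordinates; consequently neither $\theta$ nor any $\mu\in W\theta$ has a zero coordinate, the latter because $W$ acts only by permutations and sign changes of coordinates. On the other hand $\al_n\notin\Theta_Q$ means, by the convention fixed at the start of the appendix, that $\tau$ is the empty tuple. So the hypotheses of Lemma~\ref{lem:typeBbothPQdontcontaintypeB} hold, and I reread its conclusions: conclusion~(1), that cases~(1b) and~(2b) are impossible, makes part~(1) vacuous; conclusion~(2) is exactly the equivalence of part~(2), and the additional clause of part~(2) is the observation just made that $\mu$ has no zero coordinate; conclusion~(3) is part~(3).

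The only point needing a little care is checking, in each case, that the partition of coordinates into tuples and the positions of the zero coordinates come out exactly as the preceding lemmas require, including the degenerate possibilities $p=1$ or $q=1$ (i.e.\ $\Theta_P=\Theta$ or $\Theta_Q=\Theta$): there $W\theta$ either reduces to $\{0\}$ or forms a single $W_Q$-orbit, so that the hypothesis $s_\al\mu,s_\be\mu\notin W_Q\mu$ cannot hold and every assertion is vacuous --- and these configurations are in any case already covered by Example~\ref{lem:peverything}. I do not expect any genuine obstacle, since all the real combinatorics has already been carried out in Lemma~\ref{lem:typeBbothPQtypeB} and Lemma~\ref{lem:typeBbothPQdontcontaintypeB}.
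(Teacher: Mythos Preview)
Your proposal is correct and follows essentially the same approach as the paper: both proofs split into the two bullet-point hypotheses, verify that the first puts us in the setting of Lemma~\ref{lem:typeBbothPQtypeB} (with $\theta=(p-1,\dots,1,0,\dots,0)$, $r=q-1$, $k_i=i$, $\tau=(x_q,\dots,x_n)$) and the second in the setting of Lemma~\ref{lem:typeBbothPQdontcontaintypeB} (with $\theta$ having no zero coordinates and $\tau$ empty), and then read off the three conclusions. Your treatment is in fact slightly more explicit than the paper's about the ``Moreover'' clause in part~(2) and about the degenerate cases $p=1$ or $q=1$.
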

\begin{proof}
Suppose that there exist numbers $p$ and $q$ ($1 \le p,q \le n$) such that $\Theta_P=\{\al_p,\ldots, \al_n\}$, 
$\Theta_Q=\{\al_q,\ldots,\al_n\}$. 
Then 
we can take $\theta=(p-1, p-2, \ldots, 1, 0, \ldots, 0)$ (there are $n-p+1$ zeros here).
Also, $r=q-1$, $k_1=1$, $\ldots$, $k_r=q-1$, and $\tau=(x_q, \ldots, x_n)$.
The claim follows from Lemma~\ref{lem:typeBbothPQtypeB} (which also actually says that the case~(2c) is 
impossible for $\al$ or $\be$).

Now suppose that $\al_n \notin \Theta_P$ and $\al_n \notin \Theta_Q$. 
Then $\theta$ does not contain zero coordinates
(if a dominant weight contains a zero coordinate, then the last coordinate is also zero, 
and such a weight is stabilized by $s_{\al_n}$), $\tau$ is the empty tuple, and 
the claim follows from Lemma~\ref{lem:typeBbothPQdontcontaintypeB}.
\end{proof}

\begin{lem}\label{lem:typeBsufficient}
Suppose that one of the following is true:
\begin{itemize}
\item There exist numbers $p$ and $q$ ($1 \le p,q \le n$) such that $\Theta_P=\{\al_p,\ldots, \al_n\}$, 
$\Theta_Q=\{\al_q,\ldots,\al_n\}$.

\item $\al_n \notin \Theta_P$ and $\al_n \notin \Theta_Q$.

\item Either $\Theta_P$ or $\Theta_Q$ coincides with $\Theta$.

\item Either $\Theta_P$ or $\Theta_Q$ is empty.
\end{itemize}
Then 
a double moment graph $\QGP$ is closed.
\end{lem}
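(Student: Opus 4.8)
The plan is to verify condition~(1) of Corollary~\ref{lem:clos} for the graph $\QGP$: for every $\mu\in(W\theta)_Q$ and every pair of outgoing edges $\mu\to s_\al\mu$, $\mu\to s_\be\mu$ with $s_\be\mu\in W_Qs_\al\mu$ but $s_\al\mu,s_\be\mu\notin W_Q\mu$, one has to produce $w\in W_Q$ with $w\mu=\mu$ and $w(\al)=\be$. When $\Theta_P$ or $\Theta_Q$ equals $\Theta$ or is empty this is Example~\ref{ex:pqnothing} and Example~\ref{lem:peverything}, so we may assume one of the first two bullets and use the corresponding normalization of $\theta$: by Lemma~\ref{lem:typeBbothPQtypeB} every tuple other than $\tau$ is then a singleton, and by Lemma~\ref{lem:typeBbothPQdontcontaintypeB} one has $\tau=\emptyset$ and $\mu$ without any zero coordinate. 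In every case $w$ will be a product of transpositions and signed transpositions inside the $W_Q$-blocks, so $w\in W_Q$ is automatic; and once $w\mu=\mu$ and $s_{w(\al)}=s_\be$ are known, $w(\al)=\be$ follows since $(\mu,\al)=(\mu,w(\al))>0$ and $(\mu,\be)>0$. So in each case only these two conditions must be checked.

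First I would reduce to a handful of cases. Put $f=d_{s_\al\mu}-d_\mu$; since $s_\be\mu\in W_Qs_\al\mu$, Corollary~\ref{cor:typeBinvariantdescription} gives $f=d_{s_\be\mu}-d_\mu$ as well. The types~(1)--(3) for $(\mu,\al)$ are exhaustive; by Lemma~\ref{lem:typeBcase1a2a} the pair $(\mu,\al)$ is of type~(1a) iff $(\mu,\be)$ is, and likewise for~(2a), and by Lemma~\ref{lem:typeBcasesBC3} the remaining possibilities for $(\mu,\al)$ and $(\mu,\be)$ match into the classes $\{(1b),(2b)\}$, $\{(2c)\}$ and $\{(3)\}$ --- with $(2c)$ excluded under the first bullet and $(1b),(2b)$ under the second. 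Hence it suffices to treat the five situations in which both $(\mu,\al)$ and $(\mu,\be)$ lie in one of $(1a)$, $(2a)$, $\{(1b),(2b)\}$, $(2c)$, $(3)$.

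Then I would write $w$ down. In~$(1a)$ (resp.~$(2a)$) Lemma~\ref{lem:typeBcase1a2a} already provides $s_\al=(i'j')$, $s_\be=(i''j'')$ (resp.\ their signed analogues) with $i',i''$ in one tuple, $j',j''$ in another, $x_{i'}=x_{i''}$, $x_{j'}=x_{j''}$; then $w=(i'i'')(j'j'')$ works. In the class $\{(1b),(2b)\}$ under the first bullet, the non-$\tau$ tuple moved by $s_\al$, resp.\ by $s_\be$, is a singleton and is pinned down by the function $f$ via Lemma~\ref{lem:typeBinvariantdescription-explicit}, so the two coincide; thus $s_\al$ and $s_\be$ move the same outside index $i'$ and differ only through a coordinate of $\tau$, and --- reading off from Lemma~\ref{lem:typeBinvariantdescription-explicit} whether the relevant coordinate of $s_\be$ is $x_{j'}$ or $-x_{j'}$ --- one takes $w$ to be the identity, $(j'j'')$, or $\widetilde{(j'j'')}$ inside $\tau$. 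In~$(2c)$ under the second bullet the four indices $i',j',i'',j''$ lie in a common tuple, $f$ forces $\{x_{i'},x_{j'}\}=\{x_{i''},x_{j''}\}$, and $w$ is the matching product of two transpositions in that tuple; in~$(3)$, $f$ forces $s_\al$ and $s_\be$ to differ by at most one transposition inside a tuple, which is taken as $w$. In each instance $w\mu=\mu$ and $s_{w(\al)}=s_\be$ are immediate, $w(\al)=\be$ follows as above, and Corollary~\ref{lem:clos} then gives that $\QGP$ is closed.

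The main obstacle is the last paragraph in the classes $\{(1b),(2b)\}$, $(2c)$ and $(3)$: from the single identity $d_{s_\al\mu}-d_\mu=d_{s_\be\mu}-d_\mu$ one must read off, through the case tables of Lemma~\ref{lem:typeBinvariantdescription-explicit}, exactly which indices and coordinates of $s_\al$ and $s_\be$ have to agree --- and, when a coordinate of $\tau$ is involved, with which sign --- and then check that the natural candidate $w$ simultaneously stabilizes $\mu$ and conjugates $s_\al$ to $s_\be$. The signed transpositions inside $\tau$, which may flip the sign of $\al$, are the fiddly point, controlled each time by the positivity identity $(\mu,\al)=(\mu,w(\al))>0$.
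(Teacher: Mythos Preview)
Your proposal is correct and follows essentially the same approach as the paper: reduce the third and fourth bullets to Examples~\ref{ex:pqnothing} and~\ref{lem:peverything}, then verify condition~(1) of Corollary~\ref{lem:clos} by comparing invariant descriptions via the function $f=d_{s_\al\mu}-d_\mu=d_{s_\be\mu}-d_\mu$, using Lemmas~\ref{lem:typeBcase1a2a} and~\ref{lem:typeBcasesBC3} to match the cases for $\al$ and $\be$, and writing down an explicit $w$ in each. Your sign argument $(\mu,w(\al))=(\mu,\al)>0$ to upgrade $s_{w(\al)}=s_\be$ to $w(\al)=\be$ is a clean uniform device that the paper leaves implicit; your treatment of the class $\{(1b),(2b)\}$ is slightly slicker than the paper's because you exploit that under the first bullet the non-$\tau$ tuples are singletons, forcing $i'=i''$ directly, whereas the paper carries the factor $(i'i'')$ along.
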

\begin{proof}
If $\Theta_P$ or $\Theta_Q$ coincides with $\Theta$ or is empty, then the claim follows from 
Example \ref{ex:pqnothing} and Example~\ref{lem:peverything}. 
Further we suppose that one of the first two conditions in the lemma statement is true, namely:
\begin{itemize}
\item There exist numbers $p$ and $q$ ($1 \le p,q \le n$) such that $\Theta_P=\{\al_p,\ldots, \al_n\}$, 
$\Theta_Q=\{\al_q,\ldots,\al_n\}$.

\item $\al_n \notin \Theta_P$ and $\al_n \notin \Theta_Q$.
\end{itemize}

We are going to use Corollary~\ref{lem:clos}.
Suppose that we have $\mu \in (W\theta)_Q$, $\mu=(x_1, \ldots, x_n)$, and 
two outgoing edges $\mu \to s_{\al}\mu$ and $\mu \to s_{\be}\mu$ in $\GP$ 
such that $s_{\be}\mu \in W_Q s_{\al}\mu$, but $s_{\al}\mu, s_{\be}\mu \notin W_Q \mu$
(actually, in the argument below we don't need the fact that they are outgoing, we will 
only use the fact that $s_{\be}\mu \in W_Q s_{\al}\mu$, but $s_{\al}\mu, s_{\be}\mu \notin W_Q \mu$).
We have to check that there exists 
$w\in W_Q$ such that $w\mu=\mu$ and $\be = w(\al)$.

If the case~(1a) holds for $\mu$ and $\al$ (and $s_\al$ can be written as $(i'j')$ for some $i'$, $j'$), 
then by Lemma~\ref{lem:typeBcase1a2a}, 
the case~(1a) also holds for $\mu$ and $\be$, 
and $s_\be$ can be written as $(i''j'')$ for some $i''$, $j''$.
Moreover, Lemma \ref{lem:typeBcase1a2a}
also says that $x_{i'}=x_{i''}$, $i'$ and $i''$ belong to the same tuple,
$x_{j'}=x_{j''}$,
and 
$j'$ and $j''$ also belong to the same tuple.
Then $w=(i'i'')(j'j'')\in W_Q$.
We have $w \mu = \mu$ and $w \al = \be$.

Similarly, if 
the case~(1a) holds for $\mu$ and $\al$ (and $s_\al=\widetilde{(i'j')}$ for some $i'$, $j'$), 
then by Lemma~\ref{lem:typeBcase1a2a}, 
the case (1a)~also holds for $\mu$ and $\be$, 
and $s_\be=\widetilde{(i''j'')}$ for some $i''$, $j''$.
Moreover, Lemma~\ref{lem:typeBcase1a2a}
again says that $x_{i'}=x_{i''}$, $i'$ and $i''$ belong to the same tuple,
$x_{j'}=x_{j''}$,
and 
$j'$ and $j''$ also belong to the same tuple.
Then $w=(i'i'')(j'j'')\in W_Q$.
We again have $w \mu = \mu$ and $w \al = \be$.

To consider the remaining cases, it is again convenient 
to consider the function $f=d_{s_\al}\mu - d_\mu = d_{s_\be}\mu - d_\mu$
(this is the same function by Corollary~\ref{cor:typeBinvariantdescription}).

Also, if the cases~(1) or (2) hold for $\mu$ and $\al$, denote by $i'$ and $j'$ the indices such that 
$s_{\al}=(i'j')$ or $s_{\al}=\widetilde{(i'j')}$ (respectively).
Similarly, if the cases~(1) or (2) holds for $\mu$ and $\be$, 
denote by $i''$ and $j''$ the indices such that 
$s_{\be}=(i''j'')$ or $s_{\be}=\widetilde{(i''j'')}$.
If the case~(3) holds for $\mu$ and $\al$ (resp. $\mu$ and $\be$), denote by $i'$ (resp. $i''$
the index such that $s_{\al}=\widetilde{(i')}$ (resp. $s_{\be}=\widetilde{(i')}$.

Denote by $p'$ and $p''$ (respectively) the indices such that $x_{i'}$ (resp. $x_{i''}$) belongs to 
the $p'$th tuple (resp. to the $p''$th tuple).

If any of the remaining cases ((1b), (2b), (2c), or (3)) holds for $\mu$ and $\al$, then 
by Lemma~\ref{lem:typeBinvariantdescription-explicit}, 
$f(p',\cdot)$ is not the zero function, and 
$m=p'$ is the only number in $\{1, \ldots, r\}$ such that the $m$th tuple is not $\tau$ and $f(m,\cdot)$ is not the zero function.
By~Lemma \ref{lem:typeBcasesBC3}, one of the cases 
(1b), (2b), (2c), or (3) then also holds for $\mu$ and $\be$, 
and 
by Lemma \ref{lem:typeBinvariantdescription-explicit}, 
$f(p'',\cdot)$ is not the zero function. Therefore, $p'=p''$.
In other words, $i'$ and $i''$ always belong to the same tuple.
By Lemma \ref{lem:typeBcasesBC3} again, 
the case~(3) does not hold for $\mu$ and $\al$
if and only if
the case~(3) does not hold for $\mu$ and $\be$.
Then it follows directly from the definition of the cases~(1b), (2b), and (2c)
and the fact that the case~(2c) holds for 
$\mu$ and $\al$
if and only if
the case~(2c) holds for $\mu$ and $\be$, that 
$j'$ and $j''$ are defined simultaneously, and belong to the same tuple if they are defined.

If the case~(1b) holds for both $\al$ and $\be$,
then by Lemma \ref{lem:typeBbothPQdontcontaintypeB} for $\mu$ and $\al$,
the only $a \in \R$ such that $f(p',a)=-1$ is $a=x_{i'}$.
And by Lemma~\ref{lem:typeBbothPQdontcontaintypeB} for $\mu$ and $\be$,
the only $a \in \R$ such that $f(p',a)=-1$ is $a=x_{i''}$. So, $x_{i'}=x_{i''}$.
Similarly, the consideration of equation $f(p',a)=1$
and Lemma~\ref{lem:typeBbothPQdontcontaintypeB}
applied to $\mu$ and $\al$ and to $\mu$ and $\be$ together imply that $x_{j'}=x_{j''}$.
Then we can take $w=(i'i'')(j'j'') \in W_Q$, then $w \mu=\mu$, $w s_\al \mu = s_\be \mu$, and $w \al=\be$.

If the case~(2b) holds for both $\al$ and $\be$,
then we can again consider 
the equations $f(p',a)=-1$ and $f(p',a)=1$. 
By Lemma~\ref{lem:typeBbothPQdontcontaintypeB} (applied twice to the first equation and twice to the second one), 
the first equation implies $x_{i'}=x_{i''}$, and the second equation implies $-x_{j'}=-x_{j''}$, 
so $x_{j'}=x_{j''}$.
Again we can take $w=(i'i'')(j'j'') \in W_Q$, then $w \mu=\mu$, $w s_\al \mu = s_\be \mu$, and $w \al=\be$.

Suppose that the case~(1b) holds for $\al$ and the case~(2b) holds for $\be$. 
(The situation when the case~(2b) holds for $\al$ and the case~(1b) holds for $\be$ is symmetric.)
This situation is similar to the situation in Type $D$ when 
the case~(1b) holds for $\al$ and the case~(2b) holds for $\be$.

More precisely, using invariant descriptions, we can say that 
the only solution of the equation $f(p',a)=-1$ in $a$ is $x_{i'}$ and $x_{i''}$
at the same time. 
And the only solution of the equation $f(p',a)=1$ in $a$ is $x_{j'}$ and $-x_{j''}$
at the same time. 
So, $x_{i'}=x_{i''}$, and $x_{j'}=-x_{j''}$.
We can take $w=(i'i'')\widetilde{(j'j'')}\in W_Q$ (both $j'$ and $j''$ are in $\tau$), 
then $w \mu=\mu$, $w s_\al \mu = s_\be \mu$, and $w \al=\be$.

If the case~(2c) holds for $\mu$ and $\al$ then it also holds for 
$\mu$ and $\be$ by Lemma~\ref{lem:typeBcasesBC3}. 
Moreover, Lemma~\ref{lem:typeBcasesBC3} also says that 
$\mu$ does not have any zero coordinates. 
Then, by Lemma~\ref{lem:typeBinvariantdescription-explicit} for $\mu$ and $\al$, 
the numbers $a \in \R$ such that $f(p', a)$ is positive 
are exactly $a=x_{i'}$ and $a=x_{j'}$.
By Lemma~\ref{lem:typeBinvariantdescription-explicit} for $\mu$ and $\be$, 
the solutions of $f(p',a)>0$ in $a$ are $a=x_{i''}$ and $a=x_{j''}$.
So, $\{x_{i'}, x_{j'}\} = \{x_{i''}, x_{j''}\}$, 
and at least one of the following is true:
\begin{enumerate}
\item $x_{i'}=x_{i''}$ and $x_{j'}=x_{j''}$. Take $w=(i'i'')(j'j'')$.
\item $x_{i'}=x_{j''}$ and $x_{j'}=x_{i''}$. Take $w=(i'j'')(j'i'')$.
\end{enumerate}
In both cases, $w \mu=\mu$, $w s_\al \mu = s_\be \mu$, and $w \al=\be$.

Finally, if the case~(3) holds for both $s_\al$ and $s_\be$,
then the equation $f(p',a)=1$ and Lemma \ref{lem:typeBinvariantdescription-explicit} applied to $\mu$ and $\al$
and to $\mu$ and $\be$ 
show that $x_{i'}=x_{i''}$.
So, we can take $w=(i'i'')$, then $w \mu=\mu$, $w s_\al \mu = s_\be \mu$, and $w \al=\be$.

Therefore, by Corollary~\ref{lem:clos}, 
the double moment graph $\QGP$ is closed.
\end{proof}

\begin{lem}\label{lem:typeBncessary}
Suppose that $\Theta_P \ne \Theta$, $\Theta_P \ne \emptyset$, $\Theta_Q \ne \Theta$, $\Theta_Q \ne \emptyset$, 
and
one of the following conditions is true:
\begin{enumerate}
\item $\al_n \in \Theta_P$, 
and $\Theta_Q$ is not of the form $\{\al_q, \ldots, \al_n\}$ for any $q$ ($1 \le q \le n$).
\item $\al_n \in \Theta_Q$, 
and $\Theta_P$ is not of the form $\{\al_p, \ldots, \al_n\}$ for any $p$ ($1 \le p \le n$).
\end{enumerate}
Then
a double moment graph $\QGP$ is never closed.
\end{lem}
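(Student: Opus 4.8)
The plan is to invoke Corollary~\ref{lem:clos}: it suffices to exhibit, in each of the two cases of the hypothesis, a weight $\mu\in(W\theta)_Q$ together with positive roots $\al,\be$ with $(\mu,\al)>0$, $(\mu,\be)>0$, $s_\al\mu,s_\be\mu\notin W_Q\mu$, $s_\be\mu\in W_Q s_\al\mu$, and for which there is no $w\in W_Q$ with $w\mu=\mu$ and $w(\al)=\be$. The cheap device for the last point is to take $\al$ a \emph{short} root $e_i$ and $\be$ a \emph{long} root $e_i+e_j$: the Weyl group of type $B$ acts on $\R^n$ by signed permutations, hence by isometries, so no $w\in W$ at all --- a fortiori none fixing $\mu$ --- can map $\al$ to $\be$. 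So the whole task reduces to producing $\mu$ with the correct tuple structure and verifying the two $W_Q$-orbit relations, which I would do via the invariant descriptions of Definition~\ref{dfn:invdesc}, using Corollary~\ref{cor:typeBinvariantdescription} and the explicit formulas of Lemma~\ref{lem:typeBinvariantdescription-explicit}.

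For case~(1) ($\al_n\in\Theta_P$, and $\Theta_Q$ not a tail $\{\al_q,\ldots,\al_n\}$) I would first extract two combinatorial facts. Since $\al_n\in\Theta_P$, the dominant weight $\theta$ has last coordinate $0$, hence a zero coordinate, while $\Theta_P\ne\Theta$ forces $\theta\ne 0$, hence a positive coordinate. And $\Theta_Q\ne\emptyset$ together with the tail hypothesis yields a tuple $T\ne\tau$ of size $\ge 2$: if $\al_n\notin\Theta_Q$ then $\tau$ is empty and any $\al_i\in\Theta_Q$ merges positions $i,i+1$; if $\al_n\in\Theta_Q$ then $\tau=(x_m,\ldots,x_n)$ with $\al_{m-1}\notin\Theta_Q$, and since $\Theta_Q\ne\{\al_m,\ldots,\al_n\}$ some $\al_k\in\Theta_Q$ with $k\le m-2$ merges positions $k,k+1$ in a tuple lying strictly before $\tau$. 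Now I choose $\mu\in(W\theta)_Q$ --- distributing the (nonnegative) coordinates of $\theta$ among the tuples and then sorting each tuple --- so that a positive coordinate $x_i=p$ and a zero coordinate $x_j=0$ both lie in $T$ with $i<j$, and put $\al=e_i$, $\be=e_i+e_j$. Then $s_\al\mu$ replaces the $p$ at position $i$ by $-p$, while $s_\be\mu$ replaces $(p,0)$ at positions $(i,j)$ by $(0,-p)$; both change only the tuple $T$, and change it identically (one $p$ becomes one $-p$). By Corollary~\ref{cor:typeBinvariantdescription} this gives $s_\be\mu\in W_Q s_\al\mu$, while $s_\al\mu,s_\be\mu\notin W_Q\mu$ since $W_Q$ does not change signs inside $T$ and $p\ne-p$. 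Finally $(\mu,\al)=p>0$ and $(\mu,\be)=p>0$, and $\al,\be$ have different lengths, so Corollary~\ref{lem:clos} shows $\QGP$ is not closed.

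Case~(2) ($\al_n\in\Theta_Q$, and $\Theta_P$ not a tail) I would treat by the mirror construction, now exploiting a \emph{repeated positive coordinate} of $\theta$ in place of a zero. A short split shows that ``$\Theta_P$ not a tail'' always produces such a repetition: if $\al_n\notin\Theta_P$ then every coordinate of $\theta$ is $\ge 1$ and any $\al_i\in\Theta_P$ gives $c_i=c_{i+1}>0$; if $\al_n\in\Theta_P$ then $\theta$ is constant zero from some position $m$ on, where $m\ge 2$ by $\Theta_P\ne\Theta$ and $m\ge 3$ by the tail hypothesis, and a root $\al_k\in\Theta_P$ with $k\le m-2$ gives $c_k=c_{k+1}\ge c_{m-1}>0$. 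Since $\al_n\in\Theta_Q$ the tuple $\tau$ is the last one and is nonempty, and $\Theta_Q\ne\Theta$ gives a tuple $T\ne\tau$. I then pick $\mu\in(W\theta)_Q$ with the repeated value $v>0$ occurring once in $T$ (at position $i$) and once in $\tau$ (at position $j>i$), and set $\al=e_i$, $\be=e_i+e_j$. Here $s_\al\mu$ negates the $v$ at position $i$, and $s_\be\mu$ puts $-v$ at both positions $i$ and $j$; because the unsigned multiplicities in $\tau$ ignore the sign at position $j$, the invariant descriptions of $s_\al\mu$ and $s_\be\mu$ agree and differ from that of $\mu$ only in $T$. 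Thus $s_\be\mu\in W_Q s_\al\mu$ and $s_\al\mu,s_\be\mu\notin W_Q\mu$; moreover $(\mu,\al)=v>0$, $(\mu,\be)=2v>0$, and again $\al$ is short while $\be$ is long, so Corollary~\ref{lem:clos} applies.

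The main obstacle will be the bookkeeping in these reductions: translating the arithmetic hypotheses on $\Theta_P$ and $\Theta_Q$ into the existence of a suitable tuple $T\ne\tau$ and of either a zero coordinate or a repeated positive coordinate of $\theta$ (with the delicate point that ``not a tail'' forces $m\ge 3$ in the relevant subcase), and then confirming that the rearranged $\mu$ still lies in $(W\theta)_Q$, which only constrains the internal order of each tuple and the nonnegativity of $\tau$ and hence is automatic. The verification that the two resulting edges have equal invariant descriptions but leave $W_Q\mu$ is routine via Lemma~\ref{lem:typeBinvariantdescription-explicit}, and the short/long length trick makes the conjugacy obstruction itself immediate, so no subtle analysis is needed there.
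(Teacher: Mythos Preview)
Your proof is correct and follows essentially the same approach as the paper's: in each case you produce a weight $\mu$ and a short/long pair of roots $\al=e_i$, $\be=e_i+e_j$ with $s_\be\mu\in W_Q s_\al\mu\setminus W_Q\mu$, so that no $w\in W_Q$ can send $\al$ to $\be$, and then invoke Corollary~\ref{lem:clos}. The only differences are cosmetic --- the paper uses the specific positions $(i,i+1)$ in case~(1) and $(1,n)$ in case~(2) and does not sub-case on whether $\al_n$ lies in the other subset, and it is slightly less explicit than you are about arranging $\mu\in(W\theta)_Q$.
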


\begin{proof}
Suppose that 
$\al_n \in \Theta_P$, 
and $\Theta_Q$ is not of the form $\{\al_q, \ldots, \al_n\}$ for any $q$ ($1 \le q \le n$).
Then there are roots $\al_i$ and $\al_j$ such that $i < j$, $\al_i \in \Theta_Q$, 
$\al_j \notin \Theta_Q$.
Then $\theta$ contains at least one zero coordinate, and 
$x_i$ and $x_{i+1}$ belong to the same tuple of coordinates, which is not $\tau$.
Also $\theta$ has at least one nonzero (actually, positive since this is a dominant weight) 
coordinate since $\Theta_P \ne \Theta$. Suppose that the value of this coordinate is $a > 0$.

Then we can choose $u \in W^P$ so that for $\mu = u \theta$ we have $x_i=a$, $x_{i+1}=0$.
Set $\al=e_i$, $\be=e_i+e_{i+1}$. 
Then $s_\al=\widetilde{(i)}$, 
$s_\be=\widetilde{(i,i+1)}$.
Then $s_\al \mu$ has $x_i=-a$, $x_{i+1}=0$, and all other coordinates are the same as for $\mu$.
And $s_\be \mu$ has $x_i=0$, $x_{i+1}=-a$, and all other coordinates are the same as for $\mu$.
So, $s_\be \mu = (i,i+1)s_\al \mu$,
and $(i,i+1) \in W_Q$, so $s_\be \mu \in W_Q s_\al \mu$, 
but $\be \notin W_Q \al$, because $\al$ is a short root, and $\be$ is a long root.
Also, $(\mu, \al) = (\mu, \be) = a > 0$, so $\mu < s_\al \mu$ and $\mu < s_\be \mu$.
So, by Corollary~\ref{lem:clos}, 
the double moment graph $\QGP$ is not closed.

Now suppose that 
$\al_n \in \Theta_Q$, 
and $\Theta_P$ is not of the form $\{\al_p, \ldots, \al_n\}$ for any $p$ ($1 \le p \le n$).
Then $\tau$ is not the empty tuple, $x_n \in \tau$, and 
there are roots $\al_i$ and $\al_j$ such that $i < j$, $\al_i \in \Theta_P$, 
$\al_j \notin \Theta_P$.
Then the coordinates $x_i$ and $x_{i+1}$ of $\theta$ are equal and nonzero, suppose they equal $a$.
(Again, $a > 0$ since this is a dominant weight.)
Also, $\tau$ is not the only tuple since $\Theta_Q \ne \Theta$, in particular, 
$x_1 \notin \tau$.

Then we can choose $u \in W^P$ so that for $\mu = u \theta$ we have $x_1=a$, $x_n=a$.
Set $\al=e_1$, $\be=e_1+e_n$. 
Then $s_\al=\widetilde{(1)}$, 
$s_\be=\widetilde{(1,n)}$.
Then $s_\al \mu$ has $x_1=-a$, $x_n=a$, and all other coordinates are the same as for $\mu$.
And $s_\be \mu$ has $x_1=-a$, $x_n=-a$, and all other coordinates are the same as for $\mu$.
So, $s_\be \mu = \widetilde{(n)}s_\al \mu$,
and $\widetilde{(n)} \in W_Q$, so $s_\be \mu \in W_Q s_\al \mu$, 
but $\be \notin W_Q \al$, because $\al$ is a short root, and $\be$ is a long root.
Also, $(\mu, \al) = a > 0$, $(\mu, \be) = 2a > 0$, so $\mu < s_\al \mu$ and $\mu < s_\be \mu$.
So, again by Corollary~\ref{lem:clos}, 
the double moment graph $\QGP$ is not closed.
\end{proof}

\begin{proof}[Proof of Proposition~\ref{prop:typeB}]
The ``if'' part follows directly from Lemma \ref{lem:typeBsufficient}. 
For the ``only if'' part, suppose that 
none of the four conditions in the statement of the proposition is true.
In other words, suppose that $\Theta_P \ne \Theta$, $\Theta_Q \ne \Theta$, 
$\Theta_P \ne \emptyset$, $\Theta_Q \ne \emptyset$, and, speaking 
in terms of mathematical logic, the disjunction of the following two conditions is false:
\begin{enumerate}
\item There exist numbers $p$ and $q$ ($1 \le p, q \le n$)
such that 
$\Theta_P=\{\al_p,\ldots, \al_n\}$, 
$\Theta_Q=\{\al_q,\ldots,\al_n\}$.
\item $\al_n \notin \Theta_P$ and $\al_n \notin \Theta_Q$.
\end{enumerate}
Clearly, the statement 
``$\al_n \notin \Theta_P$, and 
there exist a number $p$ ($1 \le p \le n$)
such that 
$\Theta_P=\{\al_p,\ldots, \al_n\}$''
is always false, as well as the similar statement for $\Theta_Q$.
So, we can add these two statements to our disjunction and get an equivalent disjunction.
So, the disjunction of the following four conditions is false (note that we reordered some statements, this will be useful below):
\begin{enumerate}
\item 
(There exists a number $q$ ($1 \le q \le n$)
such that
$\Theta_Q=\{\al_q,\ldots,\al_n\}$),
and
(there exists a number $p$ ($1 \le p \le n$)
such that 
$\Theta_P=\{\al_p,\ldots, \al_n\}$).
\item (There exists a number $q$ ($1 \le q \le n$)
such that 
$\Theta_Q=\{\al_q,\ldots, \al_n\}$), 
and
$\al_n \notin \Theta_Q$.
\item $\al_n \notin \Theta_P$ and 
(there exists a number $p$ ($1 \le p \le n$)
such that
$\Theta_P=\{\al_p,\ldots,\al_n\}$).
\item $\al_n \notin \Theta_P$ and $\al_n \notin \Theta_Q$.
\end{enumerate}
Now, using conjunction-disjunction distributivity, we can factor this disjunction into a conjunction of two disjunctions.
So, the conjunction of the following two conditions is false:
\begin{enumerate}
\item 
(There exists a number $q$ ($1 \le q \le n$)
such that
$\Theta_Q=\{\al_q,\ldots, \al_n\}$)
or
$\al_n \notin \Theta_P$.
\item (There exist a number $p$ ($1 \le p \le n$)
such that 
$\Theta_P=\{\al_p,\ldots, \al_n\}$)
or
$\al_n \notin \Theta_Q$.
\end{enumerate}
Equivalently, the disjunction of the following two conditions is true (i.e. at least one of them is true):

If $\Theta_P$ is of the form $\{\al_p,\ldots, \al_n\}$ for some $p$ ($1\le p\le n$), 
then the first condition above says that 
$\Theta_Q$ is not of the form $\{\al_q,\ldots, \al_n\}$, 
and we also see directly that $\al_n \in \Theta_P$.

Symmetrically, if $\Theta_Q$ is of the form $\{\al_q,\ldots, \al_n\}$, 
then 
$\al_n \in \Theta_Q$, and $\Theta_P$ is not of the form $\{\al_p,\ldots, \al_n\}$.

And if neither $\Theta_P$, nor $\Theta_Q$ is of the form $\{\al_p,\ldots, \al_n\}$
for any $p$ ($1\le p \le n$),
then the second condition above still says that 
$\Theta_P$ or $\Theta_Q$ contains a subsystem of type $B$.

So, one of the following conditions is true:
\begin{enumerate}
\item $\Theta_Q$ is not of the form $\{\al_q,\ldots, \al_n\}$ for any $q$ ($1\le q\le n$), and $\al_n \in \Theta_P$.
\item $\Theta_P$ is not of the form $\{\al_p,\ldots, \al_n\}$ for any $p$ ($1\le p\le n$), and $\al_n \in \Theta_Q$.
\end{enumerate}
The claim follows from Lemma \ref{lem:typeBncessary}.
\end{proof}

\bibliographystyle{alpha}

\end{document}